\def\ind{{\mathbf 1}}
\def\l{\ell}
\newcommand{\bk}{\mathbf{k}}
\newcommand{\bg}{\mathbf{g}}
\newcommand{\ba}{\mathbf{a}}
\newcommand{\bs}{\mathbf{s}}
\begin{document}
\title[Hierarchical Coulomb Gas]{Ground states and hyperuniformity of the hierarchical coulomb gas in all dimensions}

\author[S. Ganguly]{Shirshendu Ganguly}
\address{S. Ganguly\\
  Department of Statistics\\
 U.C. Berkeley \\
  Evans Hall \\
  Berkeley, CA, 94720-3840 \\
  U.S.A.}
  \email{sganguly@berkeley.edu}

 \author[S. Sarkar]{Sourav Sarkar}
\address{S. Sarkar\\
 Department of Statistics\\
 U.C. Berkeley \\
  Evans Hall \\
  Berkeley, CA, 94720-3840 \\
  U.S.A.}
  \email{souravs@berkeley.edu}

  \date{}    
       
\date{}
\maketitle
           
\begin{abstract}
Stochastic point processes with Coulomb interactions arise in various natural examples of statistical mechanics, random
matrices and optimization problems. Often such systems due to their natural repulsion exhibit remarkable  hyperuniformity properties, that is, the number of points landing in any given region fluctuates at a much smaller scale compared to that of a set of i.i.d. random points. 
A well known conjecture from physics appearing in the works of Jancovici, Lebowitz, Manificat, Martin, and Yalcin (see \cite{lebowitz83, martinyalcin80, jancovicietal93}), states that the variance of the number of points landing in a set should grow like the surface area instead of the volume unlike i.i.d. random points. 
In a recent beautiful work \cite{cha}, Chatterjee gave the first proof of such a result in dimension three for a Coulomb type system, known as the hierarchical Coulomb gas, inspired by {Dyson's} hierarchical model of the Ising ferromagnet \cite{dyson53,dyson69}. However the case of dimensions greater than three had remained open. 
In this paper, we establish the correct fluctuation behavior up to logarithmic factors in all dimensions greater than three, for the hierarchical model. Using similar methods, we also prove sharp variance bounds for smooth linear statistics which were unknown in any dimension bigger than two. A key intermediate step is to obtain precise results about the ground states of such models whose behavior can be interpreted as hierarchical analogues of various crystalline conjectures predicted for energy minimizing  systems, and could be of independent interest. 
\end{abstract} 
\tableofcontents      
\section{Introduction}
In this article, we consider certain finite particle models of the Coulomb Gas which in general can be studied in the framework of  Gibbs measures on point processes (finite or infinite)  interacting in a potential.   
Informally, this is a model of $n$ particle interacting gas in an ambient space such as $\R^d$ or $[0,1]^d,$ which is a probability measure on points $(x_1,x_2,\ldots,x_n)$ having a joint density,

\begin{equation}\label{potential0}
\frac{1}{Z}\exp\left(-\beta \sum_{i\neq j}w(x_i,x_j)-\beta n \sum_{i=1}^nV(x_i)\right)
\end{equation}
where $w(x, y)$ is a symmetric interaction term, $V$ is an external potential, $\beta$ is some positive parameter and $Z=Z(\beta,n)$ is the partition function or normalizing constant making the above a probability measure. Throughout this article we will focus on finite point processes even though there is a rich literature devoted to infinite systems which are often defined as a limit of their finite counterparts.
The standard Coulomb interaction has the following dimension dependence:
\begin{equation}
w(x,y)=\left \{ \begin{array}{cc}
|x-y|& \,d=1,\\
-\log |x-y|& \,d=2,\\
|x-y|^{2-d}& \,d\ge 3,\\
\end{array}
\right.
\end{equation}
and additionally $V(x)$ is taken to be $|x|^2.$
Before describing the precise setting and results in this paper, we start by briefly reviewing the rich history of the study of Coulomb systems. This account will be far from complete and we refer the interested reader to \cite{serfatyICM} and the references therein for a detailed account of the various  recent progress made in different fronts. 
Classical Coulomb systems are ubiquitous statistical mechanics models. The one-dimensional Coulomb gas is an integrable model, while  
the two-dimensional Coulomb gas has several connections to random matrices; for e.g., the $\beta = 1$ case is related to the Ginibre ensemble; similarly there are connections between the so called one-dimensional `log-gases'  and the classical Gaussian matrix ensembles,   (see \cite{forrester10}, and \cite{agz10}). 
For general $\beta$, even though  the two-dimensional Coulomb gas is not exactly solvable, there has still been a lot of progress: see for e.g. \cite{bz98, petzhiai98, hardy12} for large deviation principles, which were extended to higher dimensions in  \cite{chafaietal14, serfaty14}, while concentration inequalities were proved in \cite{chafai16}. However in  general many things remain rather unclear in dimensions bigger than two.
\subsection{Fluctuation Theory}\label{fluc1}
A fundamental statistic of interest is the fluctuation of the number of particles  $N(U)$ landing in a domain $U$. A first order of business is often to estimate the number variance, $\Var(N(U))$. As in Poisson processes, in many natural examples, one has $\Var(N(U))$ growing like the volume of $U$.  In such cases, the model is said to be extensive.
However, many models that exhibit some form of repulsion, turn out to be very rigid, or as they are called in the physics literature, hyperuniform or superhomogeneous; where  the growth of fluctuations is much slower. In fact, in such cases it is often predicted to grow like the surface area of $U$. Such situations are termed sub-extensive. 
Many one and two-dimensional examples have been mathematically shown to exhibit hyperuniformity. For instance, \cite{costinlebowitz95, diaconisevans01} established the phenomenon for random unitary matrices; while random
hermitian matrices were treated  in \cite{pastur06, bey12,bey14,bey14b, ghosh15, taovu13, erdhos2012}. The spectrum of 
non-hermitian random matrices are also known to exhibit rigidity: see for e.g., \cite{borodinsinclair09,
byy14a, byy14b, ghoshperes17}.
Furthermore, 
random analytic functions  in this context has been analyzed to great detail in \cite{ houghetal09, nazarovsodin11, ghosh16, gz16, gl17,
ghoshperes17}. For a model of a different nature, \cite{peressly14, holroydsoo13},  investigated the related question of deletion tolerance for the models of a perturbed lattice with Gaussian noise.

\subsubsection {Rigidity for Coulomb type systems}
Hyperuniformity of the two-dimensional
Coulomb gas has recently been established in \cite{bbny15, bbny16, lebleserfaty16}.
But as far as we know, before Chatterjee's results, no such results establishing rigidity for systems in dimensions three and higher were available,  
although very precise information about the partition function in such contexts had recently been obtained in \cite{rougerieserfaty16, lebleserfaty15}.
Nonetheless, there is a considerable physics literature on this
topic. We point the reader to the recent survey
of Ghosh and Lebowitz \cite{gl17b}, for a detailed account. In particular, as indicated previously, \cite{martinyalcin80, martin88, lebowitz83} suggest that for any domain  $U,$ the fluctuation of the number of particles  i.e.,  $N(U)-\E(N(U))$ grows like $\sqrt{|\partial{U}|}$ and hence the model is sub-extensive. Moreover,
\begin{equation}\label{conj21}
\frac{N(U)-\E(N(U))}{\sqrt{|\partial{U}|}}
\end{equation}
is predicted to converge to a Gaussian distribution as $|U|\to \infty$ along dilations of a fixed region, say $U_0.$  

\subsection{Ground states}\label{ground1}
Another major area of research is the study of zero temperature version of the models discussed in the previous section, i.e.,  when one takes $\beta=\infty$ in \eqref{potential0}. In that case the measure is supported on the so called ground states or energy minimizing configurations. It is predicted that in many situations, the minimizing energy configurations have a crystalline/lattice  structure. However rigorous results are only known for very special examples which do not include Coulomb type interactions, see  for e.g., \cite{The, BPT, HR, Sut, radin81}. Often such questions also have a number theoretic flavor; in particular the planar case is connected to the so called Epstein-Zeta function. However, the picture in high dimensions remains far from being understood. See \cite{rougerieserfaty16} for an elaboration of the above points.
Also, very recently there has been some work in  \cite{BeltranHardy} towards the well known Smale's seventh problem \cite{Smale}, where the authors approximate the ground state in a two dimensional Coulomb system of $N$ particles by considering a typical configuration at low temperatures. More precisely, \cite{BeltranHardy} shows that when the temperature is  $O(\frac{1}{N})$, with high probability the interaction energy of a typical configuration will be within $O(\log(N))$ of the ground state energy.

\subsection{Our contributions }
The starting point of this paper is the recent beautiful work of  Chatterjee  \cite{cha} who considered the so called Hierarchical Coulomb gas model and established the correct fluctuation behavior  up to logarithmic corrections in dimensions $3,$ matching the previously alluded to prediction, in \cite{lebowitz83, martinyalcin80, jancovicietal93} (The arguments work in dimensions $1$ and $2$ as well). 
Building on this, the goal of this paper is two fold:
\begin{enumerate}
\item Have a refined understanding of the ground state configurations for the Hierarchical model and their energy. They turn out to have certain interesting number theoretic features which allow us to obtain precise estimates. 
\item Extend Chatterjee's result by establishing the correct fluctuation behavior for the number of points landing in a domain as well as smooth linear statistics in any dimension. The proof of this part uses crucially the results obtained about the ground states. 
\end{enumerate} 
After having stated the precise definitions and the main results  in the next section, in Section \ref{iop} we will describe in some detail Chatterjee's method and the new techniques needed to go beyond, to  prove the main results in this paper.

\section{Model definitions and Main results}\label{mdmr}
We start by specializing the general definition stated in \eqref{potential0} to our case. We will be considering a  probability measure on points $(x_1,x_2,\ldots,x_n)$ in $[0,1)^d,$ by taking $V=0 $ on the hypercube and $\infty$ outside. 
Thus the density in \eqref{potential0}  reduces to
\begin{equation}\label{hierar1}
\frac{1}{Z}\exp\left(-\beta \sum_{i\neq j}w(x_i,x_j)\right).
\end{equation}
We now have the following description of the hierarchical interaction $w(\cdot,\cdot),$ taken from \cite{cha} (we will only describe the regime $d\ge 3$). Consider the tree of dyadic sub-cubes by sub-dividing the unit cube $[0,1)^d$  into $2^d$ sub-cubes of side-length
$1/2$, and iteratively sub-dividing each of them.  For
any pair of distinct points $x$ and $y$ in $[0,1)^d$, we define $$w(x, y) = 2^{(d-2){(k-1)}},$$ where
$k$ is the smallest integer such that $x$ and $y$ do not belong to the same dyadic cube
of side-length $2^{-k}$. Thus the minimum potential between two points in this definition is $1$. It is worth pointing out that in \cite{cha}, $w(x, y)$ was defined to be  $2^{(d-2){k}}$ which is a constant multiple of our definition, but it would be convenient later for us to assume our setting.
Note that the above definition does not cover  points that lie on the
boundaries of the cubes. However we choose to ignore such measure zero set of points.
Also observe that $w$ is symmetric but clearly not translation invariant.
Finally we record the expression for the partition function which will be one of the central objects we will analyze en route our results.
\begin{equation}\label{partition}
Z_n=Z(n,\beta)=\int\ldots\int \exp\left(-\beta \sum_{i\neq j}w(x_i,x_j)\right) {\rm d}x_1 {\rm d}x_2 \ldots {\rm d}x_n \,.
\end{equation}
Before proceeding further, it would be convenient to define some notations.
\subsection{Notations}
For brevity, we will often use $\bx$ to denote the tuple $(x_1,x_2, \ldots,x_n)$ and $H_n(\bx)=\sum_{i\neq j} w(x_i,x_j),$ to denote the Hamiltonian. 
 As already mentioned, we will work with the semi-open unit cube $[0,1)^d$.  Recall that a  dyadic sub-interval of $[0, 1)$ is of the form $[\frac{i}{2^{k}}, \frac{i+1}{2^{k}})$, where $k\ge 0$ and  $0 \le i \le 2^k - 1$ and  a dyadic sub-cube of $[0, 1)^d$ is of the form $I_1 \times I_2\times I_3 \times \ldots \times I_d$, where $I_1, I_2,\ldots,I_d$ are dyadic sub-intervals of $[0, 1)$ of length $\frac{1}{2^k}$ for some $k\ge 0$.
Thus let $$\cD_{k}=\left\{\left[\frac{i_1}{2^{k}}, \frac{i_1+1}{2^{k}}\right)\times \ldots \times \left[\frac{i_d}{2^{k}}, \frac{i_d+1}{2^{k}}\right): 0\le i_1,i_2,\ldots i_d \le 2^{k}-1\right\}$$
 be the set of all dyadic sub-cubes of $[0,1)^d$ of side length $2^{-k}$, and let
\begin{equation}\label{tree}
\cD : =  \bigcup_{k=0}^\infty \cD_k,
\end{equation}
denote the set of all dyadic sub-cubes of $[0,1)^d$. Not surprisingly, it would be important for us to exploit the natural tree structure on  $\cD$, with each node having $2^d$ children.  We will  often use the terminology typically associated to describing a tree such as child, parent, ancestor, descendant, generation/level etc. to denote the obvious corresponding objects in $\cD$.

Hierarchical models were introduced by Dyson in statistical mechanics to study a variant of the one-dimensional Ising model \cite{dyson53, dyson69}. Subsequently, the so called \textit{Dyson's hierarchical model} has led to a lot of activity, see for e.g., \cite{benfattoetal86, marchettiperez89, dimock90,  kappeleretal91, benfattorenn92, guidimarchetti01} for results about the two-dimensional hierarchical Coulomb gas. However, before \cite{cha} not much was known about this model in any dimension greater than $2$.

\subsection{Statements of the theorems}
To state the main results, we first set things up following some of the definitions in \cite{cha}.
For any $d\geq 3$, let $U$ be a nonempty open subset of $\R^d$ and $\partial U$ denote the boundary of $U$. For every $\varepsilon>0$, we will denote the set of all points, at a distance of at most $\varepsilon$ from $\partial U,$ by 
$\partial U_\varepsilon$, and the diameter of $U$ will be called $\mathrm{diam}(U)$. The boundary of $U$ will be said to be \textit{regular} if there exists some constant $C$ such that for all $0<\varepsilon\leq \mathrm{diam}(U)$,
\[\mathrm{Leb}(\partial U_\varepsilon)\leq C\varepsilon\,,\]
where $\mathrm{Leb}$ denotes the Lebesgue measure. Note that if the boundary is a smooth, closed, orientable surface, then it is regular. Finally  as mentioned before, $N(U)$ will be used to denote the number of points in $U.$ We are now in a position to state our main result which proves the correct variance bound for $N(U)$ up to logarithmic factors under the above stated regularity properties.

\begin{maintheorem}[Macroscopic Hyperuniformity]\label{marigid} Let $U$ be a non empty connected open subset of $[0,1)^d$ with a boundary that is a smooth, closed, orientable surface.  Then there exists positive constants $C(U,\beta), c(U,\beta)$ such that, 
\[c(U,\beta) n^{\frac{d-1}{d}}\le \mbox{Var}(N(U))\leq C(U,\beta)n^{\frac{d-1}{d}}\log^{13} n\,.\] 
The lower bound, is in fact a corollary of the following stronger fact about anti-concentration: 
 there are constants $ c > 0$ and $p < 1$, depending only on $U$ and $\beta$, such that for all large enough $n$ we have $$\P(a\le N(U)\le b)\le p,$$ for any $a \le b \in \R$ with $b-a\le cn^{\frac{d-1}{2d}}$. \end{maintheorem}
As discussed around \eqref{conj21}, the logarithmic factors in the upper bound are artifacts of the proof and are expected to be superfluous.  
We next state a key property of the hierarchical model which states that the dyadic boxes exhibit much stronger hyperuniformity. This is a significant improvement over the corresponding result in \cite{cha} which for $d=3,$ proves a $O(n^{\frac23})$ bound on the same. 
\begin{maintheorem}[Strong rigidity estimates for dyadic cubes]\label{keyrigid} For any $D\in \cD_1,$
\[\mbox{Var}(N(D))\leq K(\beta)\max\{\log^{12}(n),1\}\,,\]
where $K(\beta)$ is a non-increasing function of $\beta$. Moreover the hierarchical structure of the model and that $K(\beta)$ is non-increasing, implies the same bound for $\Var(N(D))$ for any dyadic box $D\in \cD.$ 
\end{maintheorem}

A crucial ingredient in the proofs of the above theorems is the following estimate on the ground state energy. Let $$L_{n}:=\min_{\bx}H_n(\bx),$$ for $n\geq 2$, where the infimum is taken over point processes which do not intersect the boundaries of the sub-cubes. 

\begin{maintheorem}[Ground state energy]\label{gse}For any $m\in \N $, let
$$m=\sum_{i=0}^{k_m}c^{(m)}_i2^{di}\,,$$ where  $0\le c^{(m)}_i\le 2^{d}-1$ for all $i\le k_m$ and $c^{(m)}_{k_m}>0$ be the representation of $m$ in base $2^d.$ Then,
\[L_{n}=\frac{(C_d+2)(n-1)n}{2}-C_d\sum_{m=1}^{n-1}[\sum_{i=0}^{k_m}c^{(m)}_i2^{(d-2)i}]
\,,\]
where
$C_d:=\frac{2^{d-1}-2}{3\cdot 2^{d-2}}\,.$ 
\end{maintheorem}
\begin{rmk} Note that the above implies in particular that $L_n=\frac{(C_d+2)n^{2}}{2}-O(n^{\frac{d-2}{d}+1})$, which already follows from Chatterjee's method to bound $L_n$ (see \cite[Theorem 2.2]{cha}) based on an elegant application of the Cauchy-Schwarz inequality. A refined statement of a similar flavor for the standard Coulomb gas model was obtained in the seminal work \cite{rougerieserfaty16}.
\end{rmk}

Our next result gives a description of the ground states. As mentioned in the introduction, there has been a lot of activity around identifying energy minimizing configurations which for Coulomb systems are predicted to exhibit certain crystalline or lattice structures. The next result can be thought of as an analogue of such statements in the hierarchical setting where the problem is much easier to analyze. 

\begin{maintheorem}[Ground state configurations]\label{gsc} A configuration $\sigma$ is a ground state iff for any vertex $D\in \cD$, if $D_{1}, D_2,\ldots D_{2^d}$ denote the children of $v$ then the following two conditions are satisfied:
\begin{enumerate}
\item $N(D)=\sum_{i=1}^{2^{d}}N(D_i).$
\item $|N(D_i)-N(D_j)|\le 1$ for all $i,j.$
\end{enumerate}
\end{maintheorem}

Note that the first condition is implicit in the definition of a configuration. So the only non-trivial constraint says that, a configuration is a ground state if and only if inside any dyadic box, the particles are partitioned as uniformly as possible among the dyadic boxes of the next level. 

Finally we state our results about linear statistics. For any function $f:[0,1]^d\to \R,$ the associated linear statistic is the following: 

\begin{equation}\label{linstat}
X(f)=\sum_{i=1}^{n}f(X_i),  
\end{equation}
where $(X_1,\ldots,X_n)$ forms a Coulomb system of size $n.$ Thus, as in the previous theorems $N(U)$ is obtained when $f$ is just the indicator of $U.$ But in what follows we will be interested in smooth functions.  Fluctuation theory for smooth linear statistics of point processes has a rich history.  In dimensions one and two, which in particular covers the case of eigenvalues of random matrices, it is known that $X(f)-\E(X(f))$ is a tight random variable, unlike sum of i.i.d. random variables where the variance grows like ${n}.$ Furthermore in many cases, one can also show a central limit theorem proving that $X(f)-\E(X(f))$ converges to a Gaussian distribution. See for e.g., \cite{bbny15, bbny16, bey12, bey14, bey14b, diaconisevans01, joh19, joh20, lebleserfaty16}.
However in dimensions bigger than two, there is a change in behavior of $X(f)$ and the variance grows polynomially in the system size.  Chatterjee had analyzed the case of three dimensions and had proved the following non-matching upper and lower bounds: 
\begin{equation}\label{chasmooth}
n^{1/3}\le \Var(X(f))\le n^{2/3}.
\end{equation}
 It was not clear what the sharp bound would be and in the vaguely related context of orthogonal polynomials, \cite{bardenethardy16} had established a sharp variance exponent of $2/3$ providing some support that the upper bound was correct. 
In the following theorem we establish the correct variance bound in any dimension, in particular proving that in fact the lower bound in \eqref{chasmooth} is sharp. 
\begin{maintheorem}[Smooth linear statistics]\label{smoothsharp} Assuming $f$ as above is a Lipschitz function with Lipschitz constant $L$, then $\Var(X(f))= O(K(\beta)L^2)n^{\frac{d-2}{d}},$
where $K(\beta)$ is the constant appearing in Theorem \ref{keyrigid}.
\end{maintheorem}
A matching lower bound of $\Theta(n^{\frac{d-2}{d}})$ for linear functions can be obtained by following the argument of the lower bound in dimension three from \cite[Theorem 1.5]{cha}. Note that unlike Theorem \ref{marigid}, Theorem \ref{smoothsharp} does not lose any logarithmic factors.

\section{Ideas of the proofs}\label{iop}
The advantage of working with the hierarchical model is that, often a necessary coarse graining of the potential field is inbuilt into the definition.
Thus under this setting,  a natural first choice of the set $U$ in the context of the discussion around \eqref{conj21}, would be a dyadic box of side length $1/2$.  Now note that in dimension $d$ there are $2^d$ such boxes. 
Let $\cP$ be a random configuration of $n$ points in $[0,1]^d$ sampled according to the hierarchical Coulomb gas measure and for any dyadic box $v$  recall that $N(v)$  denotes the number of points landing in $v.$

We will now specialize to the case $d=3$ to  describe briefly the approach in Chatterjee's work and then the new ideas needed to go beyond dimension $3$. Let $v_1,v_2,\ldots, v_{8}$ denote all the eight dyadic boxes of side length $1/2.$ 
Since the expected number of particles in each $v_i$ by symmetry is $\frac{n}{8}$, it is natural to compute the probability of the event 
\begin{equation}\label{discrepancy}
N(v_i)=\frac{n}{8}+k_i,
\end{equation}
for $i=1,2,\ldots,8$, for a given vector of integers $(k_1,k_2,\ldots,k_8)$ with $\sum_{i=1}^{8} k_i=0$ (we assume, for simplicity, that $n/8$ is an integer).
Now it is not too hard to observe that, 
\begin{align}\label{probest1}\P\left(N(v_i)=n_i \text { for }i=1,2,\ldots,8,\right)= 8^{-n}\frac{n!}{n_1!n_2!\ldots n_8!}e^{-\beta\sum_{1\leq i\neq j\leq 8}n_in_j} \frac{\prod_{i=1}^8Z(2\beta,n_i)}{Z(\beta,n)}.
\end{align}
The $8^{-n}$ factor accounts for the fact that the volume of each dyadic box is $\frac{1}{8}.$ The  multinomial factor comes from choosing the labels of the points landing in the boxes $v_1,v_2,\ldots,v_8.$ Now given these points, the Hamiltonian or the interaction energy comprises of two terms: $$e^{-\beta\sum_{1\leq i\neq j\leq 8}n_in_j}$$ is  the contribution of the interaction across different boxes. The remaining effect from both the interaction inside the dyadic boxes, as well as the entropy of the precise location of the points inside them appear as the product of the partition functions $\prod_{i=1}^8Z(2\beta,n_i),$ since the hierarchical nature of the model implies that the particle systems restricted to each of the cubes are themselves independent hierarchical coulomb systems with $\beta$ replaced by $2\beta$ ($2^{d-2}\beta$ in general).

Thus, pretending for the purpose of the sketch that $n$ is a multiple of $8$, the key step in \cite{cha} is to compare $\P\left(N(v_i)=n_i \text { for }i=1,2,\ldots,8,\right)$ with $\P\left(N(v_i)=\frac{n}{8} \text { for } i=1,2,\ldots,8\right)$.  Hence using \eqref{probest1}, it becomes crucial to obtain sharp estimates of the following types of ratio of partition functions, 
\begin{equation}\label{iter1}
\frac{Z(\beta,m+k)}{Z(\beta,m)}
\end{equation} for some integers $m, k$ and any $\beta>0.$
However it suffices to understand $\frac{Z(\beta,m\pm1)}{Z(\beta,m)},$ since, one can iterate it $k$ times to obtain the sought estimate. This is similar in flavor to the so called Cavity method in Spin Glasses where one adds one more spin to the system and analyzes its effects.
We will now briefly outline Chatterjee's argument to upper and lower bound $\frac{Z(\beta,m+1)}{Z(\beta,m)}.$ 
Start by observing that 
\begin{align}\label{ratio1}
\frac{Z(\beta,m+1)}{Z(\beta,m)}=\E\left(e^{-2\beta \sum_{j=1}^{m}w(X_i,U)}\right)
\end{align}
where $(X_1,\ldots, X_m)$ forms a Coulomb system of size $m$ and $U$ is an independent uniformly distributed over $[0,1]^3$ random variable. 
Now by Jensen's inequality,
$$\E\left(e^{-2\beta \sum_{j=1}^{m}w(X_i,U)}\right)\ge e^{-2\beta \sum_{j=1}^{m}\E w(X_i,U)}=e^{-2\beta \alpha m },$$ where $\alpha=\int \int w(x,y)dxdy,$ since for each $i$ the variable $X_i$ is uniformly distributed as well and independent of $U.$
Now to prove an upper bound on $\frac{Z(\beta,m+1)}{Z(\beta,m)}$, Chatterjee proves a lower bound on $\frac{Z(\beta,m)}{Z(\beta,m+1)}$ instead. Again similar to \eqref{ratio1} one can observe that \begin{align}\label{ratio2}
\frac{Z(\beta,m)}{Z(\beta,m+1)}=\E\left(e^{2\beta \sum_{j=1}^{m}w(X_i,X_{m+1})}\right)&\ge \exp\left({2\beta \sum_{j=1}^{m}\E w(X_i,X_{m+1})}\right)\\
& = \exp\left({\frac{\beta m}{{{m+1}\choose{2}}}\sum_{1\le i\neq j\le m+1}\E w(X_i,X_j)}\right)
\end{align}
where $(X_1,\ldots, X_{m+1})$ forms a Coulomb system of size $m+1.$ Note that the last equality is just a consequence of the exchangeability of the sequence $(X_{1},\ldots,X_{m+1}).$ Now the proof is complete by observing that $\sum_{1\le i\neq j\le m+1} w(X_i,X_j)\ge L_{m+1}$, where $L_{m+1}$ is the ground state energy. 
Thus one obtains, $$\frac{Z(\beta,m+1)}{Z(\beta,m)}\le e^{-\frac{2\beta}{m+1} L_{m+1}}.$$
At this point, Chatterjee using the Cauchy-Schwarz inequality, proves:  
\begin{equation}\label{groundstate1}
L_m=\alpha m(m+1) -O(m^{4/3}),
\end{equation}
using which one obtains the following estimate: 
 \begin{equation}\label{cavity1}
e^{-2\beta \alpha m}\le \frac{Z(m+1)}{Z(m)}\le e^{-2\beta \alpha m+O(m^{1/3})}.
\end{equation}
Iterating the above yields :
\begin{equation}\label{tail14}
\prod_{i=1}^8\frac{Z(\beta,m+k_i)}{Z(\beta,m)}\le e^{-\beta C \sum_{i=1}^8k_i^2+ \sum_{i=1}^8O(|k_i|)n^{1/3}}.
\end{equation}
for some constant $C$.
It turns out that the above bound holds even after taking into account the remaining factors in \eqref{probest1}. Hence  \eqref{tail14} shows that the probability that any $k_i\ge Cn^{1/3}$ for a large enough constant $C,$  decays like $e^{-\Theta(n^{2/3})}.$  This proves the upper bound. We will skip the argument for the lower bound for now. 
However in dimension $4,$ instead of \eqref{groundstate1}, one has $L_{m}=\alpha m^2 -O(m^{3/2}).$ Thus applying the same argument one obtains the bound 
\begin{equation}\label{tail16}
\prod_{i=1}^{16}\frac{Z(\beta,m+k_i)}{Z(\beta,m)}\le e^{-C\beta\sum_{i=1}^{16}k_i^2+ \sum_{i=1}^8O(|k_i|)n^{1/2}},
\end{equation}  which only implies a $O(\sqrt{n})$ bound on the fluctuation and this progressively becomes worse as the dimension increases. For e.g., this provides a bound of $n^{\frac35}\gg n^{\frac12}$ in dimension $5.$

To get around this, we take a more direct approach by first identifying exactly the ground state configurations and hence the precise value of $L_n$ for any $n$ as stated in Theorem \ref{gse}. As indicated in Section \ref{ground1}, there are several predictions stating that the energy minimizing configurations of the standard Coulomb system should look like certain lattices. We obtain an analogous statement for the hierarchical model as mentioned in Theorem \ref{gsc}. The proof relies on an inductive argument and local modifications to pass from a non-optimal configuration to another one where the energy strictly decreases. The argument is quite general and does not rely on the exact nature of the Coulomb potential. However for the Coulomb potential, 
the value of $L_n$ turns out to have a rather interesting  dimension dependent number theoretic description. Precise details are provided in Section \ref{ground10}. 

Given this sharp understanding, we then proceed to obtain  the following very precise estimate of the partition function $Z_{n}$: 
\begin{equation}\label{keyest1}
Z_n^{\rm Gr}\le Z_{n} \le Z_n^{\rm Gr} e^{\log^{6} n}\,,
\end{equation}
 where $Z_n^{\rm Gr}$ is the contribution to the partition function from all the ground state configurations. 
Our analysis of the ground states yields sharp estimates for $Z_n^{\rm Gr}$ which along with the above inequality,  imply similar bounds for $Z_n$. This is proved in Section \ref{s3}, relying on the key estimate recorded in Proposition \ref{l:entropy1}.

We end this section with a short sketch of the key idea in the proof of \eqref{keyest1}. Recall that to specify a configuration we have to specify the number of points landing in each dyadic sub-cube. 
Now for arbitrary configurations, it is quite hard to control the interaction between particles and thus estimate the Hamiltonian. But in order to exploit  our understanding of the ground state energies, we construct an interpolation or a chain/sequence of configurations between the ground states and a generic configuration in the following way: the configuration at step $i$ (say $\cP^{(i)}$) of the sequence has the property that the restriction to dyadic boxes in $\cD_i$ (of side length $2^{-i}$) are ground state configurations, and the configurations at  step $i+1$ of the sequence and those at step $i$ agree for the first $i$ levels (see {Figure} \ref{fig1} for an illustration).
\begin{figure}[t]
\centering
\includegraphics[width=.6\textwidth]{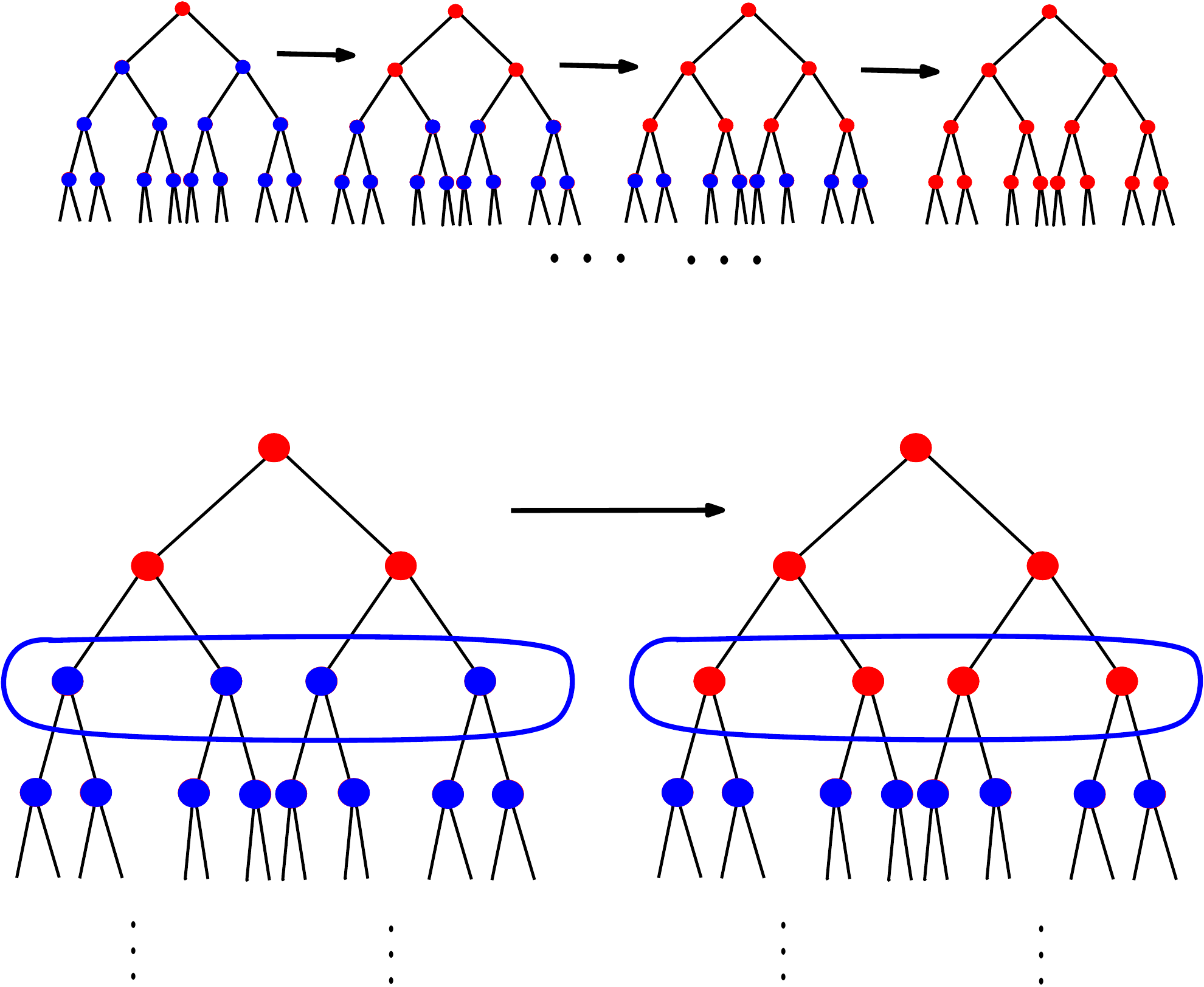}
\caption{Illustration of the proof strategy of interpolating between grounds states and any arbitrary state. The blue vertices denote ground state configurations while the red vertices denote arbitrary configurations.}
\label{fig1}
\end{figure}
We can now perform a refined energy vs entropy comparison at every step of this interpolating sequence. The argument consists of broadly three steps:
\begin{enumerate}
\item We first prove an estimate on the increase in energy i.e.,  $H(\cP^{(i+1)})-H(\cP^{(i)})$ as a function of an Euclidean type distance between the vectors say $a_i$ and $e_i$ denoting the partition of points  induced by $\cP^{(i)}$ and $\cP^{(i+1)}$ respectively at level $i+1$(on the elements of $\cD_{i+1}$). 
This is stated in  Lemma \ref{l:energy}. As a consequence,  if $i=\Omega(\log\log(n))$ then one obtains that 
\begin{equation}\label{prelim1}
Z(\cP^{(i+1)})\le e^{-c\beta 2^{(d-2)(i)} {\rm{dist}}(a_i,e_i)^2}Z(\cP^{(i)}),
\end{equation} for some constant $c,$ where $Z(\cP)$ denotes the contribution to the partition function by $\cP$. The precise formulation along with a formal definition of ${\rm{dist}}(\cdot,\cdot)$ appears in \eqref{e:imp2'}.

\item In the next step we show that under the assumption  $i= \Omega(\log\log n)$ the energy increase in step (1) recorded in \eqref{prelim1} is large enough to beat the  entropy of the number of possible distortions of the vector $a_i$ in to $e_i$ keeping  ${\rm{dist}}(a_i,e_i)$ fixed. Thus summing over all such distortions, one obtains the estimate 
\begin{equation}\label{ratioestimate}
\frac{Z_n^{(i+1)}}{Z_n^{(i)}} \le \Big(1+Ce^{-c_1\beta 2^{(d-2)i}}\Big),
\end{equation}
whenever $i\ge m_0,$
where $Z_n^{(i)}$ denotes the contribution to the partition function by configurations at step $i$ of the interpolation i.e., those whose restrictions to elements of $\cD_i$ form ground state configurations (thus $Z_n^{(1)}=Z_n^{\rm{Gr}}$) (this is precisely formulated in \eqref{e:imp2} and the discussion following it).
 We then use the telescopic product $$\displaystyle{\frac{Z_{n}}{Z_n^{\rm{Gr}}}=\prod_{i=1}^{\infty}\frac{Z_n^{(i+1)}}{Z_n^{(i)}}}$$ and the bound \eqref{ratioestimate}, to show that 
\begin{equation}
\label{key20}
Z_{n}-Z_n^{(m_0)}\leq C_0n^{-c_0}Z_n\,,
\end{equation}
for some constants $c_0,C_0>0,$ and $m_0=\Theta(\log\log n)$ (the precise value of $m_0$ will be specified later). 
We believe this approach of transferring estimates on ground state energies to partition functions by the method of interpolation could be useful in other related contexts. 
\item 
Equipped with these estimates we can now follow Chatterjee's approach closely to obtain 
${\rm{Var}}(N(D))=O(\log^{12}n)$  for any dyadic cube $D$. 
\end{enumerate}
\begin{figure}[t]
\centering
\includegraphics[width=.4\textwidth]{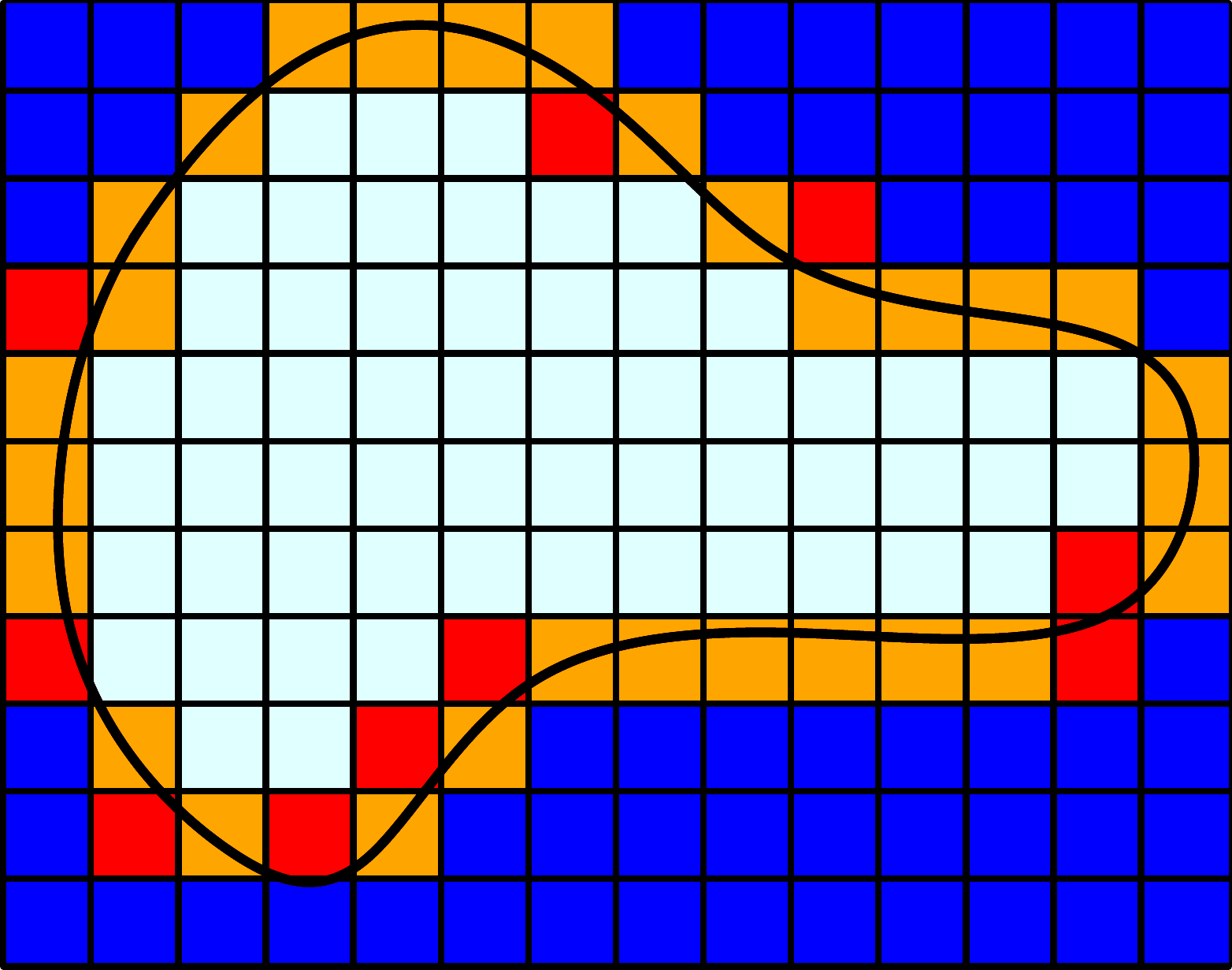}
\caption{Figure illustrating a sketch of the lower bound on fluctuations. The black curve denotes the boundary of the smooth domain $U.$ The entire cube is tessellated into dyadic cubes of size $n^{1/d},$ thus on average they have one point landing in them. The dark blue cubes lie outside $U$, the light blue cubes lie in the interior of $U$ while the yellow and red cubes lie on the boundary. The yellow ones additionally have the property that a significant fraction of their interior intersect both $U$ and $U^c.$ Smoothness of $U$ guarantees that the number of such cubes is $\Theta(n^{\frac{d-1}{d}}).$ Conditioning on the number of points landing in each of the cubes at this scale, on the typical event that a constant fraction of the yellow cubes have exactly one point in them, the number of points landing in $U$ is then up to a deterministic translation (function of the conditioning coming from points landing in the light blue cubes) a sum of $\Theta(n^{\frac{d-1}{d}})$ Bernoulli variables with probability of success bounded away from zero and one, leading to the sought lower bound on fluctuations. }
\label{fig5}
\end{figure}

At this point the reader might be wondering how the above is consistent with the conjecture recorded in  \eqref{conj21}.  It turns out that while for the actual coulomb potential, the conjecture is supposed to hold for every set $U,$ for the hierarchical model, the square root of the surface measure gives the right order of fluctuation only for ``generic" sets assumed in the hypothesis of Theorem \ref{marigid}. For a quick sketch of the idea behind the argument regarding the lower bound we refer the reader to Figure \ref{fig5}, leaving precise details for later. 
The proofs of Theorems \ref{marigid}  and \ref{smoothsharp} follows from Theorem \ref{keyrigid} by considering suitable martingales. 

\subsection{Further Remarks}There are several related fluctuation bounds besides Theorem \ref{marigid}, that one should be able to obtain using the methods in this paper. 
While this article includes results about the macroscopic behavior of the point process, one can pursue understanding the microscopic behavior as well by considering blowing up the system centered at a given point. We believe our methods should yield  rigidity estimates at such microscopical scales too. Furthermore, a closer look at the proof shows that the dependence on the exact nature of the Coulomb potential is not very important and we expect that one should be able to extend the methods of this paper to other singular interactions as well, such as the setting of Riesz gases. However such studies are not pursued here and will be taken up in future projects. 
The obvious open question is to obtain tight variance bound in Theorem \ref{marigid} by removing the log-discrepancies between the upper and lower bounds.
For a further refinement, we end this discussion by reiterating the open problem mentioned in \eqref{conj21} about proving a central limit theorem for $N(U)$ at scale $n^{\frac{d-1}{2d}}.$ 

\subsection{Organization of the article} In Section \ref{ground10} we prove the statements about the ground states (Theorem \ref{gse}, Theorem \ref{gsc}). Section \ref{pot1} is devoted to Theorem \ref{keyrigid} which contains most of the key new ideas in this paper, using heavily the results about the ground states obtained in Section \ref{ground10}. This section is rather long and for the ease of the reader a detailed roadmap is provided at the beginning of the section, indicating what the subsequent subsections achieve. Assuming Theorem \ref{keyrigid}, the proof Theorem \ref{marigid} is completed by following the arguments in \cite{cha} closely. For completeness, we sketch the key points in Section \ref{sketch}. Using similar ideas, the proof of Theorem \ref{smoothsharp} is provided in Section \ref{posmooth}.

\subsection{Acknowledgement} We thank Sourav Chatterjee for several  discussions  that inspired us to work on this problem. We also thank Paul Bourgade, Manjunath Krishnapur, and Sylvia Serfaty for many helpful comments. 

\section{Identifying ground states and the minimum energy}\label{ground10}

In this section, we prove Theorems \ref{gse} and \ref{gsc}. We will in fact  prove Theorem \ref{gsc} first.
We start by defining a point process  $\bz=(z_1,z_2,\ldots, z_n)$ such that $H(\bz):=H_n(\bz)=L_n.$
Note that by the hierarchical nature of the definition in \eqref{hierar1}, $H(\bz)$ is not quite a function of the points, but of the number of points landing in each dyadic cube. Hence it suffices to specify the partition $\cP$ (number of particle landing in each element of $\cD$) induced by $\bz.$  Subsequently, for any partition $\cP$ we will let $H(\cP):=H(\bz),$ for any point process $\bz$ corresponding to the partition $\cP.$

Recalling our convention (see \eqref{tree}) of identifying the dyadic cubes with the vertices of the $2^d-ary$ tree,   we will freely denote cubes by vertices of the tree and use standard terminology such as: two cubes $v$ and $w$  will be called siblings if the vertices have the same parent in the tree. Also, $\cP(v)$ will denote the number of points landing in $v$ under the partition $\cP$.

Let $\cP_{\min}:=\cP^{(n)}_{\min}$ be a partition which has the property that for any pair of siblings $v$ and $w$ (under the above mentioned identification) we have $$|\cP(v)-\cP(w)|\le 1.$$ Note that given $n,$ there is a unique such partition up to natural symmetries.  It is easy to define this inductively. Namely the following: 
Let $v_1,v_2, \ldots, v_{2^d}$ be adjacent to the root. Then 
\begin{equation}\label{induc0}
\cP_{\min}(v_i)=\left \{ \begin{array}{cc}
\lfloor \frac{n}{2^d}\rfloor+1& \,1\le i \le r,\\
\lfloor \frac{n}{2^d}\rfloor &   r< i.
\end{array}
\right.
\end{equation}
where $n=2^d\lfloor \frac{n}{2^d}\rfloor+r.$
Given the above, if $w_1,w_2,\ldots, w_{2^d}$ are the children of $v_j$ for any $1\le j \le 2^d$, then define  $\cP_{\min}(w_i)$ as in  \eqref{induc0} by replacing $n$ by  $\cP_{\min}(v_j).$
Given the above notations we now show that $H(\cP_{\min})=L_n$ to  prove Theorem \ref{gsc}.
\subsection{Proof of Theorem \ref{gsc}} The proof is by induction on $n$ and some local operations which modifies a partition not satisfying \eqref{induc0} to obtain another one with a smaller energy.   The first non-trivial case is $n=2$ where the theorem is easily seen to hold. 
Let us now prove it for $n$ assuming the statement for all $m<n.$
Consider a configuration $\cP_*$ such that the partition induced on the children $(v_1,v_2,\ldots,v_{2^d})$ of the root is $(a_1,a_2,\ldots,a_{2^d})$, such that $\sum_{i=1}^{2^d} a_i=n$ and $H(\cP_*)=L_n.$  

It is clear that $a_i$ is strictly less than $n$ for all $1\le i \le 2^d.$ 
Since otherwise if $a_1=n$ then the configuration  obtained by removing one point from the dyadic cube $v_1$ to the dyadic cube $v_2$ strictly decreases the energy and contradicts the hypothesis that $H(\cP_*)=L_n.$ 
Now since $a_i<n,$ for all $i,$ by induction hypothesis even if there could be multiple ground states, without loss of generality we can assume that $\cP_{*}\mid_{v_i}=\cP^{(a_i)}_{\min}$  defined in \eqref{induc0} with $n$ replaced by  $a_i$, and $\cP_{*}\mid_{v_i}$ denotes the partition $\cP_*$ restricted to the sub-tree rooted at $v_i$ (identified naturally with the whole tree).
Note that we will now be done once we show that $|a_i-a_j|\le 1$ for all $1\le i,j\le 2^d.$ We will prove this by contradiction.

Suppose not, and also without loss of generality using the underlying symmetry we can assume $a_1-a_2\ge 2.$
Now let $(w_1,w_2,\ldots, w_{2^d})$ and  $(z_1,z_2,\ldots, z_{2^d})$ be the children of $v_1$ and $v_2$ respectively. 
Let $\cP_{*}(w_i)=b_i$ and $\cP_{*}(z_i)=c_i.$ 
Thus 
\begin{equation}\label{part98}
\sum_{i=1}^{2^d} b_i=a_1, \text{ and similarly, }\sum_{i=1}^{2^d} c_i=a_2.
\end{equation}

Since we have assumed that $a_1- a_2\ge 2$ and induction hypothesis tells us the values of $b_i$ and $c_i$ up to symmetries,  it follows that we can choose the permutations of the $b_i's$ and $c_i's$ to ensure that,\begin{equation}\label{assum}
b_1-c_1\ge 1, b_2-c_2\ge 1 \text{ and } b_i\ge c_i  \text{ for all } 2<i\le 2^d.
\end{equation}

Note that the above says the following: Induction hypothesis allows us to conclude that $a_1- a_2\ge 2$ implies at least two $b_i$'s are bigger than the corresponding $c_i$'s.  
This is where we crucially use that the partition inside $v_1$ and $v_2$ are given by $\cP^{(a_1)}_{\min}$  and $\cP^{(a_2)}_{\min}$ respectively and their special structure.

We will now create a modified configuration $\cP'$ which will improve the energy.
The modified configuration is illustrated  in Figure \ref{mod} (for $d=2$).
\begin{figure}[t]
\centering
\includegraphics[width=.5\textwidth]{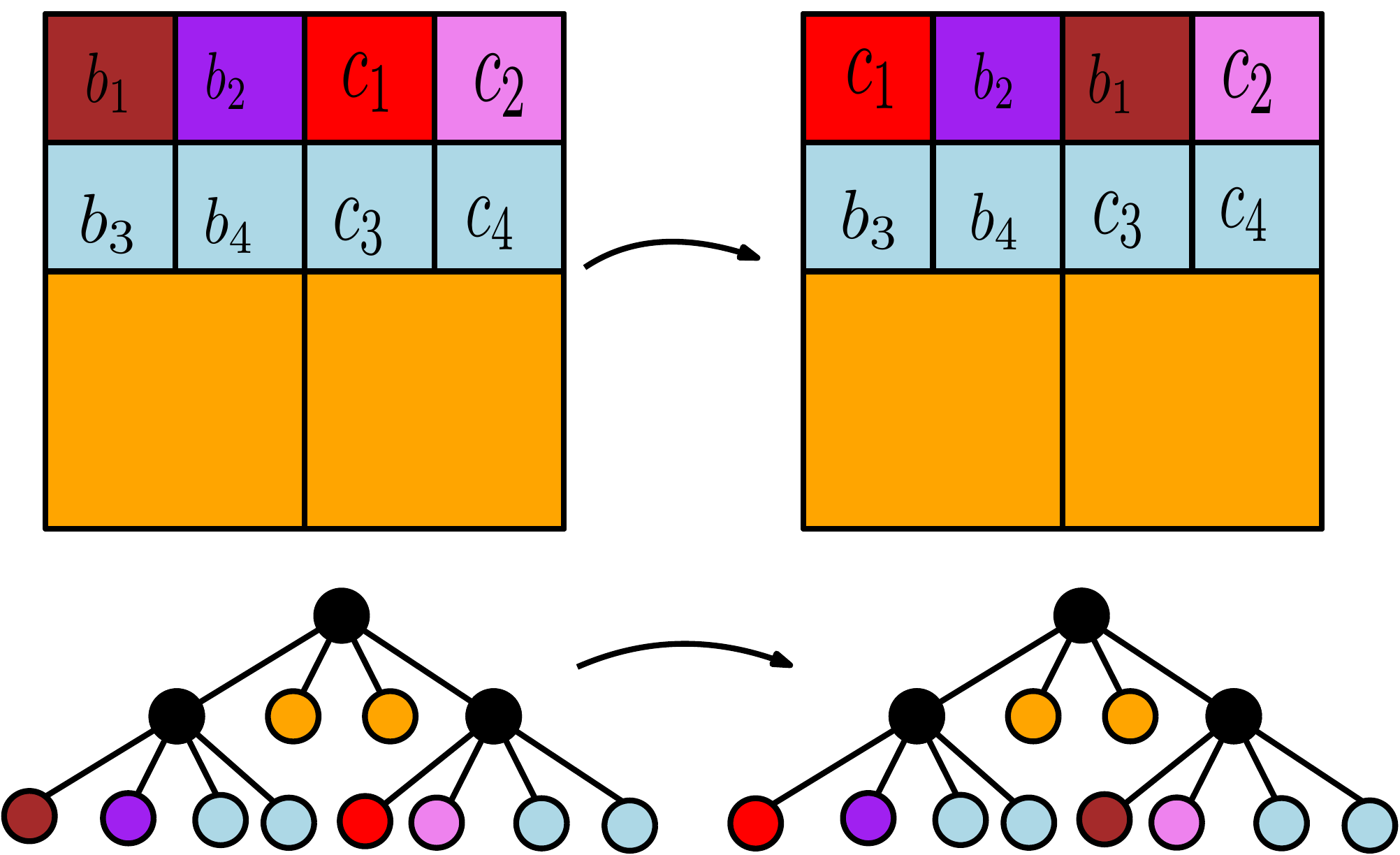}
\caption{The figure illustrates the modification scheme in the proof of Theorem \ref{gse}  in the planar case (although the proof is written for $d\ge 3$), where $b_1+b_2 +\ldots b_{4}\ge c_1+c_2+\ldots c_{4}+2$ and the $b_i's$ and the $c_i's$  themselves form ground state partitions by induction hypothesis. Then the partition obtained by switching $b_1$ and $c_1$ yields a better partition where the ordering of the $b_i$ and $c_i$ are chosen to satisfy \eqref{assum}.}
\label{mod}
\end{figure}
Namely to get from $\cP_*$ to $\cP'$ we keep the configuration unchanged in the dyadic cubes $v_i$ for all $i\ge 3.$ Now in $\cP_*\mid_{v_1}$ and $\cP_*\mid_{v_2}$ there are $a_1$ and $a_2$ points respectively. 
We also keep the environments $\cP_*\mid_{w_i}$ and $\cP_*\mid_{z_i}$ unchanged for $i\ge 2$. However the only change we make is to interchange the environments $\cP_*\mid_{w_1}$ and $\cP_*\mid_{z_1}.$

In other words $\cP'\mid_{z_1}=\cP_*\mid_{w_1}$ and $\cP'\mid_{w_1}=\cP_*\mid_{z_1}.$
 We now have the following lemma
 
 \begin{lem}\label{better}Under the above assumptions 
 $H(\cP')< H(\cP_*).$
 \end{lem}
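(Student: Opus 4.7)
The plan is to decompose $H(\cP)$ into contributions from different levels of the dyadic tree and then track which of them change under the swap.

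First, I would observe that for the hierarchical interaction each pair of distinct points is separated at exactly one level, so setting $P_k := \sum_{D \in \cD_k} N(D)^2$ and noting that a pair lying in a common level-$(k-1)$ cube but in different level-$k$ cubes contributes $2^{(d-2)(k-1)}$ to $H$, one obtains the telescoping identity
\[
H(\cP) = \sum_{k \ge 1} (P_{k-1} - P_k)\,2^{(d-2)(k-1)} = P_0 + (2^{d-2}-1)\sum_{k\ge 1} P_k\, 2^{(d-2)(k-1)}.
\]
Since $d \ge 3$, each coefficient of $P_k$ for $k \ge 1$ is strictly positive, so the problem reduces to tracking how each $P_k$ changes under the swap.

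Next I would examine each level separately. Because the swap interchanges the \emph{entire} subtrees rooted at $w_1$ and $z_1$ without altering their internal structure, the multiset $\{N(D) : D \in \cD_k\}$ is merely permuted at every level $k\ge 3$, so $P_k$ is unchanged for all such $k$. At level $k=2$, the children of $v_i$ for $i\ge 3$ are untouched, while among the children of $v_1\cup v_2$ the two counts $b_1$ and $c_1$ simply switch locations, leaving the multiset $\{b_1,\ldots,b_{2^d},c_1,\ldots,c_{2^d}\}$ intact; hence $P_2$ is also unchanged. The only surviving change is at level $k=1$: the counts $a_1$ and $a_2$ become $a_1'=a_1-(b_1-c_1)$ and $a_2'=a_2+(b_1-c_1)$, while $a_3,\ldots,a_{2^d}$ are fixed.

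Substituting back,
\[
H(\cP') - H(\cP_*) = (2^{d-2}-1)\bigl[(a_1')^2 + (a_2')^2 - a_1^2 - a_2^2\bigr].
\]
Since $a_1'+a_2'=a_1+a_2$, the inequality $H(\cP')<H(\cP_*)$ reduces to $|a_1'-a_2'|<|a_1-a_2|$, i.e., the elementary fact that equalizing two numbers of fixed sum strictly decreases their sum of squares. Writing $a_1'-a_2'=(a_1-a_2)-2(b_1-c_1)$, what is needed is $0<b_1-c_1<a_1-a_2$. The lower bound is the first inequality in \eqref{assum}. For the upper bound, decompose $a_1-a_2=\sum_{i=1}^{2^d}(b_i-c_i)$; by \eqref{assum} the term $b_2-c_2$ is at least $1$ and the remaining summands are nonnegative, so $a_1-a_2\ge(b_1-c_1)+1$.

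The only step that requires genuine care is verifying $\Delta P_2 = 0$: this is precisely where the design of the modification---exchanging the \emph{entire} environments $\cP_*\mid_{w_1}$ and $\cP_*\mid_{z_1}$ rather than merely shuttling individual points across---is essential, since it forces the level-$2$ counts inside $v_1$ and $v_2$ to permute as a multiset. Everything else is a routine consequence of the telescoping identity and hypothesis \eqref{assum}.
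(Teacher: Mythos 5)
Your proof is correct, but it takes a genuinely different route from the paper's. The paper argues by a local bookkeeping of interaction terms: it decomposes the energy into the pieces $E_i, E_{i,j}, F_{i,j}, G_{i,j}, H_{i,j}$, cancels everything that the swap leaves invariant (Lemmas \ref{cons12} and \ref{cons13}), and reduces the claim to the rearrangement-type inequality $(b_1-c_1)\sum_{j\ge 2}(b_j-c_j)>0$. You instead use a global level decomposition: writing $P_k=\sum_{D\in\cD_k}N(D)^2$, the hierarchical Hamiltonian is (up to the constant $P_0=n^2$) a positive linear combination of the $P_k$, $k\ge 1$, with coefficients $(2^{d-2}-1)2^{(d-2)(k-1)}>0$ for $d\ge 3$. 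Since swapping the entire subtrees at $w_1$ and $z_1$ merely permutes the multiset of occupation numbers at every level $k\ge 2$, only $P_1$ changes, and it strictly decreases because the swap moves $(a_1,a_2)$ strictly closer to balance (here $0<b_1-c_1<a_1-a_2$ is exactly what \eqref{assum} delivers). This is cleaner and more conceptual than the paper's computation, and it makes transparent why the argument ``does not rely on the exact nature of the Coulomb potential'': any strictly increasing sequence of level weights yields positive coefficients in the Abel-summed form, hence the same monotonicity in each $P_k$.

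One small repair is needed: the second equality in your display, $H(\cP)=P_0+(2^{d-2}-1)\sum_{k\ge 1}P_k\,2^{(d-2)(k-1)}$, equates a finite quantity with a divergent series, since $P_k\ge n$ for every $k$. The first (telescoping) form is fine because $P_{k-1}-P_k$ vanishes for large $k$. To make the rearrangement legitimate, either truncate at a level $K$ beyond which every cube holds at most one point (the boundary term $P_K 2^{(d-2)(K-1)}=n2^{(d-2)(K-1)}$ is then the same for $\cP_*$ and $\cP'$ and cancels), or apply Abel summation directly to the difference $H(\cP')-H(\cP_*)=\sum_{k\ge1}(Q_{k-1}-Q_k)2^{(d-2)(k-1)}$ with $Q_k=P_k'-P_k$, which has only finitely many nonzero terms. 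With that fix the rest of your argument goes through verbatim.
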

Before proving the above, let us see how this implies Theorem \ref{gsc}. As mentioned above, the lemma implies that $|a_i-a_j|\le 1$ for all $1\le i, j\le 2^d.$ Induction then takes care of the structures inside $v_i$ for $1\le i \le 2^d$.
\qed

\begin{proof}[Proof of Lemma \ref{better}]The intuition is that $\cP'$ is  more uniformly spread than $\cP_*$ and hence the energy should decrease. To formalize the above it would be convenient to separate the contributions from pairs of vertices depending on which dyadic cube they lie in. We need to develop some notation first.  For any $v\in \cD,$ and a partition $\cP$, let $H(P\mid_{v})$ denote the total energy from the interactions inside $v$ i.e., $$H(P\mid_{v})=\sum_{x_{s}\neq x_t \in \cP\mid_{v}} w(x_s,x_t).$$ Similarly for $u\neq v$ let $H(P\mid_{u}, P\mid_{v})$ denote the energy from the interactions across the vertices $u$ and $v$
i.e., $$H(P\mid_{u}, P\mid_{v})=\sum_{x_{s} \in \cP\mid_{u}, x_t \in \cP\mid_{v} } w(x_s,x_t).$$ Given these notations, for brevity, 
 $1\le i\neq j \le 2^d$ let
\begin{align*}
E_i:= H(\cP_{*}\mid_{v_i}), \quad\quad & E'_i:= H(\cP'\mid_{v_i}), \\
E_{i,j}:= H(\cP_{*}\mid_{v_i},\cP_{*}\mid_{v_j}), \quad\quad & E'_{i,j}:= H(\cP' \mid_{v_i},\cP_{*}\mid_{v_j}).
\end{align*}
Thus $E_i, E'_i, E_{i,j} E'_{i,j}$ denote the interactions in $v_i,$ and between $v_i$ and $v_j$ in the partitions $\cP_*$ and $\cP'$ respectively.
\begin{align}\label{rep1}
H(\cP_{*})&=\sum_{i=1}^{2^d} E_i + \sum_{1\le i\neq j \le 2^d} E_{i,j} \text{ and similarly,\,\,} 
H(\cP')=\sum_{i=1}^{2^d} E'_i +\sum_{1\le i\neq j \le 2^d} E'_{i,j}.
\end{align}

Now by construction of $\cP'$ from $\cP_*$, the following lemma follows.
\begin{lem}\label{cons12}
The following holds:
\begin{enumerate}
\item $E_i\neq E'_i$ for $i \le 2.$
\item $E_{i}=E'_{i}$ for all $i \ge 3.$
\item $E_{i,j}=E'_{i,j}$ if $\min (i,j)\ge 3$.
\item  Even though for any $j\ge 3$ we have $E_{1,j} \neq E'_{1,j}$ as well as $E_{2,j} \neq E'_{2,j}$ it follows that $E_{1,j}+E_{2,j}=E'_{1,j}+E'_{2,j}.$
\end{enumerate}
\end{lem}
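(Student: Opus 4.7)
The plan is to exploit two structural facts simultaneously: the locality of the swap $\cP_* \mapsto \cP'$ (which only exchanges the occupants of the subtrees $w_1 \subset v_1$ and $z_1 \subset v_2$) and the explicit hierarchical form $w(x,y) = 2^{(d-2)(k-1)}$, which makes interactions between points in different dyadic boxes depend only on the level at which their boxes separate. Once these are kept in mind, each of the four assertions reduces to a short bookkeeping step.

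For (2) and (3), I would simply observe that for any $i \ge 3$ the swap does not alter $\cP_*|_{v_i}$ at all, so trivially $E_i = E'_i$; and if additionally $j \ge 3$, then neither $v_i$ nor $v_j$ is modified, which forces $E_{i,j} = E'_{i,j}$ term by term. No computation is needed beyond parsing the definition of $\cP'$.

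Statement (4) is the only genuinely computational part, but the calculation is one line. For $x \in v_1$ and $y \in v_j$ with $j \ge 3$, the smallest dyadic cube containing both is the root, so the weight is $w(x,y) = 2^{(d-2)\cdot 0} = 1$. Hence $E_{1,j} = a_1 a_j$ and $E_{2,j} = a_2 a_j$, and the analogous identities hold for $\cP'$ with $a_1, a_2$ replaced by the new counts $a_1' = a_1 - b_1 + c_1$ and $a_2' = a_2 + b_1 - c_1$. The key observation is the conservation $a_1' + a_2' = a_1 + a_2$, which immediately gives $E'_{1,j} + E'_{2,j} = (a_1 + a_2)a_j = E_{1,j} + E_{2,j}$; since $b_1 - c_1 \ge 1$, each summand is individually shifted by a nonzero amount, which takes care of the non-equality part of the assertion as well.

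Finally, for (1), I would decompose $E_1$ into (i) the internal energy of $\cP_*|_{w_1}$, (ii) the internal energies of $\cP_*|_{w_i}$ for $i \ge 2$, and (iii) the cross terms between $w_1$ and its siblings $w_i$. Parts (ii) are unchanged. Part (i) becomes the internal energy of $\cP_*|_{z_1}$ after the swap, which may or may not coincide with the original in a degenerate case, but part (iii) contributes $2^{d-2} (\text{count in }w_1) \sum_{i \ge 2} b_i$, and this shifts by $2^{d-2}(c_1 - b_1)\sum_{i \ge 2} b_i$, which is strictly nonzero since $b_1 \ne c_1$ and at least one $b_i$ with $i \ge 2$ is positive (as $a_1 \ge 2$). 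The same argument applied inside $v_2$ gives $E_2 \ne E'_2$. I do not anticipate any genuine obstacle: the only care required is to remember that the hierarchical weights on a subtree depend only on the isomorphism type of the subtree, so swapping the occupants of $w_1$ and $z_1$ genuinely behaves like a rearrangement.
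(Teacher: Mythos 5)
Your treatment of items (2)--(4) is correct and coincides with the paper's (essentially one-line) proof: (2) and (3) follow from the locality of the swap, and for (4) the paper makes exactly your observation that, since the cross-level weight between $v_1$ (or $v_2$) and $v_j$ equals $1$, the sum $E_{1,j}+E_{2,j}=(a_1+a_2)a_j$ depends only on the conserved total $a_1+a_2$. Your explicit computation is just a concrete rendering of that remark.

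The one weak spot is item (1). Your argument is: the cross terms inside $v_1$ shift by $2\cdot 2^{d-2}(c_1-b_1)\sum_{i\ge 2}b_i\ne 0$, while the internal energy of the swapped block ``may or may not'' change, hence $E_1\ne E_1'$. As written this is not a valid inference --- a nonzero shift in one summand does not preclude cancellation against the other. To close it you need the sign observation that $F_1-G_1=2^{2(d-2)}\left(L_{b_1}-L_{c_1}\right)\ge 0$ (since $b_1>c_1$ and $n\mapsto L_n$ is nondecreasing), so both differences point the same way and in fact $E_1>E_1'$ strictly. For $i=2$ the symmetric argument requires $\sum_{i\ge 2}c_i>0$, which can fail: take $a_2=0$ (so all $c_i=0$) and $a_1=2$ with $b_1=b_2=1$; then $E_2=E_2'=0$ and the claim of item (1) is itself false in this degenerate case. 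None of this affects Lemma \ref{better}: item (1) is purely informational and is never invoked --- only (2)--(4) enter the reduction to $E_1+E_2+2E_{1,2}>E_1'+E_2'+2E_{1,2}'$. The same caveat applies to the ``non-equality'' clause of (4), which fails when $a_j=0$; only the equality $E_{1,j}+E_{2,j}=E_{1,j}'+E_{2,j}'$ is used.
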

\begin{proof}The first three observations are trivial. 
The last observation follows by noticing that for any $j \neq 1,2,$ given the number of points in $v_j,$ the quantity, 
$E_{1,j}+E_{2,j}$ only depends on the total number of points in $v_1$ and $v_2.$ Now our method of obtaining $\cP'$ from $\cP_*$ clearly conserves the last quantity and hence the observation.

\end{proof}

This along with \eqref{rep1} suggests that it suffices to show that 
$$E_1+E_2+2E_{1,2}> E'_1+E'_2+2E'_{1,2}.$$
We will now further decompose $E_1, E_1'$ into contributions from the various $w_i's$ and similarly $E_2, E_2'$ into contributions from $z_i's$ respectively. Thus for $i,j=1,2,\ldots, 2^d,$ let us define,
\begin{align*}
F_i:= H(\cP_{*}\mid_{w_i}), \quad\quad & F'_i:=H(\cP' \mid_{w_i}), \\
G_i:= H(\cP_{*}\mid_{z_i}), \quad\quad  & G'_i:= H(\cP'\mid_{z_i}), \\
F_{i,j}:= H(\cP_{*}\mid_{w_i},\cP_{*}\mid_{w_j}), \quad\quad & F'_{i,j}:= H(\cP'\mid_{w_i},\cP'\mid_{w_j}),\\
G_{i,j}:= H(\cP_{*}\mid_{z_i},\cP_{*}\mid_{z_j}), \quad\quad & G'_{i,j}:= H(\cP'\mid_{z_i},\cP'\mid_{z_j}),\\
H_{i,j}:= H(\cP_{*}\mid_{z_i},\cP_{*}\mid_{w_j}), \quad\quad & H'_{i,j}:= H(\cP'\mid_{z_i},\cP'\mid_{w_j}),\\
\end{align*}
Thus $F_i, G_i, F_{i,j}, G_{i,j}, H_{i,j}$ and  $F'_i,G'_i, F'_{i,j}, G'_{i,j}, H'_{i,j}$ denote the interactions in $w_i, z_i$ between the pairs $(w_i, w_j), (z_i, z_j)$ and $(z_i, w_j)$ in the partitions $\cP_*$ and $\cP'$ respectively.
With the above notation we have 
\begin{align}\label{rep2}
E_1=\sum_{i=1}^{2^d} F_i + \sum_{1\le i\neq j \le 2^d} F_{i,j} ,  \quad\quad &
E'_1=\sum_{i=1}^{2^d} F'_i + \sum_{1\le i\neq j \le 2^d} F'_{i,j} , \\
\nonumber
E_2=\sum_{i=1}^{2^d} G_i + \sum_{1\le i\neq j \le 2^d} G_{i,j} ,  \quad\quad &
E'_2=\sum_{i=1}^{2^d} G'_i + \sum_{1\le i\neq j \le 2^d} G'_{i,j} , \\
\nonumber
E_{1,2}= \sum_{1\le i,j \le 2^d} H_{i,j},  \quad\quad & E'_{1,2}= \sum_{1\le i,j \le 2^d} H'_{i,j}. 
\end{align}

We again have a  list of observations as in Lemma \ref{cons12}.

\begin{lem}\label{cons13}
The following holds:
\begin{enumerate}
\item $F_{i}=F'_{i}$ for all $i \ge 2,$  and $F_{1}=G'_{1}$ and $G_1=F'_1.$
and  $G_{i}=G'_{i}$ for all $i \ge 2.$
\item $F_{i,j}=F'_{i,j}$ if $\min (i,j)\ge 2$ and  $G_{i,j}=G'_{i,j}$ if $\min (i,j)\ge 2$.
\item $F_{1,j}=2^{d-2}H'_{j,1}$ and  $G_{1,j}=2^{d-2}H'_{1,j}$
\item $F'_{1,j}=2^{d-2}H_{j,1}$ and  $G'_{1,j}=2^{d-2}H_{1,j}$
\item $H_{i,j}=H'_{i,j}$ if $\min (i,j)\ge 2.$ and $H_{1,1}=H'_{1,1}$
\end{enumerate}
\end{lem}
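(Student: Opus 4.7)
The plan is to verify each of the five items by direct inspection of the definitions, using two structural inputs: (a) the modification $\cP_*\mapsto\cP'$ is strictly local, swapping only $\cP_*\mid_{w_1}$ with $\cP_*\mid_{z_1}$ and leaving every other $\cP_*\mid_{w_i}, \cP_*\mid_{z_i}$ ($i\geq 2$) intact; and (b) the hierarchical weight $w(x,y)=2^{(d-2)(k-1)}$ depends only on the dyadic level $k$ at which $x$ and $y$ first split, so that for cubes $u\neq v$ in the dyadic tree, $H(\cP\mid_u,\cP\mid_v)$ equals the level weight times the product of counts $\cP(u)\cP(v)$.

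For item (1), the sub-configurations in $w_i, z_i$ with $i\geq 2$ are untouched, hence so are their internal energies. For $i=1$, $F'_1=H(\cP'\mid_{w_1})=H(\cP_*\mid_{z_1})=G_1$ by the very definition of the swap, and symmetrically $G'_1=F_1$. For item (2), if $\min(i,j)\geq 2$ then $F_{i,j}=2^{d-2}b_ib_j$ and $G_{i,j}=2^{d-2}c_ic_j$ involve counts that are unchanged under the swap. For item (5) with $\min(i,j)\geq 2$, the same reasoning gives $H_{i,j}=c_ib_j=H'_{i,j}$ (level-$1$ weight is $1$); the remaining case $H_{1,1}=H'_{1,1}$ follows because $\cP_*(z_1)\cP_*(w_1)=c_1b_1=b_1c_1=\cP'(z_1)\cP'(w_1)$, the product being invariant under the exchange of the two counts.

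Items (3) and (4) are the substantive equalities, and the factor $2^{d-2}$ appearing in them is exactly the ratio between a level-$2$ pairwise weight (inside $v_1$ or inside $v_2$) and a level-$1$ pairwise weight (between $v_1$ and $v_2$). Each of $F_{1,j}, F'_{1,j}, G_{1,j}, G'_{1,j}$ decomposes as $2^{d-2}$ times a product of two counts, while the matching $H$ or $H'$ on the right-hand side is $1$ times a product of two counts; using the post-swap identifications $\cP'(w_1)=c_1$ and $\cP'(z_1)=b_1$, the relevant pairs of products coincide and the identities follow, with the $2^{d-2}$ prefactor absorbing precisely the weight ratio between consecutive hierarchical levels.

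Since the entire lemma is a list of elementary identities that follow by unpacking the definitions, there is no substantial obstacle: everything reduces to decomposing each Hamiltonian contribution as (level weight) times (product of two counts) and tracking which counts are affected by the swap. The bookkeeping is comparable in complexity to that of Lemma~\ref{cons12} and requires no new ideas beyond the locality of the modification and the hierarchical structure of $w$.
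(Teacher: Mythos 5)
Your proof is correct and follows essentially the same route as the paper's, which simply observes that every cross term is (level weight)\,$\times$\,(product of counts) and that the swap only permutes the counts $b_1\leftrightarrow c_1$; items (1), (2), (5) are dismissed as trivial there and (3)--(4) are attributed to the weight ratio $2^{d-2}$ versus $1$, exactly as you argue. One caveat: if you carry out the bookkeeping you promise in items (3)--(4) with the paper's convention $H_{i,j}=H(\cP_*\mid_{z_i},\cP_*\mid_{w_j})$, you get $F_{1,j}=2^{d-2}b_1b_j=2^{d-2}H'_{1,j}$ and $G_{1,j}=2^{d-2}c_1c_j=2^{d-2}H'_{j,1}$ (and likewise for item (4)), i.e.\ the indices in the printed statement are transposed --- a harmless typo, since the subsequent argument only uses the symmetric sums $\sum_{j}(H_{1,j}+H_{j,1})$ and $\sum_{j}(H'_{1,j}+H'_{j,1})$, but your assertion that ``the relevant pairs of products coincide'' should be read modulo this transposition.
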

\begin{proof} (1), (2), (5) are trivial.  (3) and (4) follow since for two points $x,y$ in $w_1,w_2$  $w(x,y)=2^{d-2}$ while for two points $x,y$ in $w_1,z_1,$  $w(x,y)=1.$
\end{proof}
Recall that we have to show $E_1+E_2+2E_{1,2}> E'_1+E'_2+2E'_{1,2}.$
From \eqref{rep2} and Lemma \ref{cons13} it follows that 
\begin{align*}
E_1-E'_1 &= F_1-F'_1+ 2\sum_{j\neq 1} (F_{1,j}-F'_{1,j}) \\
E_2-E'_2 &= G_1-G'_1+ 2\sum_{j\neq 1} (G_{1,j}-G'_{1,j})\\
E_{1,2}-E'_{1,2} &= \sum_{j}(H_{1,j}+H_{j,1})-(H'_{1,j}+H'_{j,1})
\end{align*}
Thus using the above it follows that we have to show 
\begin{align*}
\sum_{j\neq 1} (F_{1,j}-F'_{1,j})+\sum_{j\neq 1} (G_{1,j}-G'_{1,j})&> \sum_{j}(H'_{1,j}+H'_{j,1})-(H_{1,j}+H_{j,1}) \text{ or }\\
\sum_{j\neq 1} (F_{1,j}-F'_{1,j})+\sum_{j\neq 1} (G_{1,j}-G'_{1,j})&> \sum_{j\neq 1}(H'_{1,j}+H'_{j,1})-(H_{1,j}+H_{j,1}) \text{ or }\\
2^{d-2}\left[\sum_{j\neq 1} (H'_{j,1}-H_{j,1})+\sum_{j\neq 1} (H'_{1,j}-H_{1,j})\right]&> \sum_{j\neq 1}(H'_{1,j}+H'_{j,1})-(H_{1,j}+H_{j,1}) \text{ or }\\
\sum_{j\neq 1} \left[2^{d-2} (H'_{j,1}+H'_{1,j})+(H_{1,j}+H_{j,1})\right] & > \sum_{j\neq 1} \left[2^{d-2}(H_{j,1}+H_{1,j})+(H'_{1,j}+H'_{j,1})\right]\text{ or }\\
2^{d-2}\sum_{j\neq 1} (H'_{j,1}+H'_{1,j}) &> 2^{d-2}\sum_{j\neq 1}  (H_{j,1}+H_{1,j}).
\end{align*}

Now recalling \eqref{part98}, observe that for $j\neq 1,$ up to a deterministic multiplicative constant \begin{align}
H_{1,j}=b_1c_j, H'_{1,j}=c_1c_j,  H_{j,1}= c_1b_j, H'_{j,1}=b_1b_j
\end{align}
Thus we have to show 
\begin{align}
\sum_{j\neq 1} (b_1b_j +c_1c_j) >\sum_{j\neq 1 } (c_1b_j +b_1c_j) \text{ or }\\
\sum_{j\neq 1} (b_1-c_1)b_j >\sum_{j\neq 1 } (b_1-c_1)c_j 
\end{align}

Now by  \eqref{assum}, $ (b_1-c_1)\ge 1$ and $\sum_{j\ge 2}b_j >\sum_{j\ge 2}c_j$ and hence we are done.
\end{proof}

Given our understanding of the ground states from the above discussion,     we now proceed to finding an expression for the minimal energy stated in Theorem \ref{gse}.
\subsection{Proof of Theorem \ref{gse}} We now find $L_n$ by a recursion. In fact it will actually be useful to define the recursion in terms of the difference term $D_n:= L_{n+1}-L_n,$ for $n=0,1,2,\ldots$, where we set $L_0=L_1:=0$. (Hence $D_0=0$.)
\begin{lem}\label{rec1} $D_n$ satisfies the following recursion: 
\begin{align*}
D_n:= 2^{d-2}D_{\left\lfloor \frac{n}{2^d}\right\rfloor}+2(n-\left\lfloor \frac{n}{2^d}\right\rfloor).
\end{align*}
\end{lem}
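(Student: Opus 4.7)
The plan is to exploit Theorem \ref{gsc} together with the self-similarity of the hierarchical potential to obtain a closed-form expression for $L_n$, and then compute $D_n = L_{n+1} - L_n$ directly.

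First I would establish a one-level decomposition of the Hamiltonian. Letting $v_1, \ldots, v_{2^d}$ denote the children of the root and $m_i := \cP(v_i)$ for a partition $\cP$ with $\sum_i m_i = n$, every ordered pair of distinct points is either a cross-pair, whose two points lie in different children and contribute $w = 2^{(d-2)\cdot 0} = 1$, or an in-pair lying inside some common $v_i$. Two points within $v_i$ are separated at a global level $k \ge 2$, which becomes level $k-1$ once one views the subtree at $v_i$ as its own dyadic hierarchy; the global weight therefore equals $2^{d-2}$ times the intrinsic within-$v_i$ weight. Consequently the minimum of the global-$w$ energy restricted to pairs inside $v_i$, taken over all $m_i$-point configurations in $v_i$, equals exactly $2^{d-2} L_{m_i}$, so the ground state energy with prescribed top-level counts $(m_1, \ldots, m_{2^d})$ is
\[2^{d-2}\sum_{i=1}^{2^d} L_{m_i} + \sum_{i \ne j} m_i m_j \;=\; 2^{d-2}\sum_{i=1}^{2^d} L_{m_i} + n^2 - \sum_{i=1}^{2^d} m_i^2.\]

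By Theorem \ref{gsc}, the $(m_i)$ that realize $L_n$ are as uniform as possible: writing $n = 2^d q + r$ with $q = \lfloor n/2^d \rfloor$ and $0 \le r \le 2^d - 1$, exactly $r$ of them equal $q+1$ and the remaining $2^d - r$ equal $q$. Substituting yields
\[L_n = 2^{d-2}\bigl[r L_{q+1} + (2^d - r) L_q\bigr] + n^2 - 2^d q^2 - r(2q+1).\]
To evaluate $D_n$ I would split into the case $r \le 2^d - 2$, where $\lfloor(n+1)/2^d\rfloor = q$ and the partition for $n+1$ differs from that for $n$ only by replacing one copy of $q$ with a copy of $q+1$, and the rollover case $r = 2^d - 1$, where $n+1 = 2^d(q+1)$ and all $2^d$ cells become $q+1$. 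Using $(n+1)^2 - n^2 = 2n+1$ in the first case, and additionally the identity $(2^d - 1)(q+1)^2 + q^2 = 2^d(q+1)^2 - (2q+1)$ in the rollover case, both cases collapse to
\[L_{n+1} - L_n = 2^{d-2}(L_{q+1} - L_q) + (2n+1) - (2q+1) = 2^{d-2} D_q + 2(n - q),\]
which is the stated recursion since $q = \lfloor n/2^d \rfloor$.

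The only mildly delicate point is the rollover case, where both the $\sum L_{m_i}$ term and the $\sum m_i^2$ term change structurally rather than incrementally; the displayed algebraic identity makes the cancellation reappear. Otherwise the computation is purely mechanical once Theorem \ref{gsc} is in hand, so I do not anticipate a genuine obstacle.
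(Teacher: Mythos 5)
Your proof is correct and rests on the same ingredients as the paper's: Theorem \ref{gsc} to identify the balanced top-level counts, the factor $2^{d-2}$ relating within-child energy to a rescaled copy of $L_{m_i}$, and the unit cross-interactions contributing $n^2-\sum_i m_i^2$. The paper avoids your case split on the rollover $r=2^d-1$ by observing that the ground-state partitions for $n$ and $n+1$ always differ in exactly one top-level coordinate (the one holding $\lfloor n/2^d\rfloor$ points), but this is only a difference in bookkeeping, not in substance.
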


\begin{proof}
The proof is a straightforward consequence of Theorem \ref{gsc}. To see this, let $\cP_1$ and $\cP_2$ be two ground state partitions for $n$ and $n+1$ respectively given by Theorem \ref{gsc}.  Now by definition of these partitions (see \eqref{induc0}), it follows that there exists $1\le i\le 2^d$, which without loss of generality, we can assume to be $1,$ such that:
\begin{align*}
\cP_2\mid_{v_i} =\cP_1\mid_{v_i}, \text {for } 2\le i \le 2^d,\,\,
\cP_2(v_1)  = \cP_1(v_1)+1\text{ and moreover},\,\,
\cP_1(v_1)&= \left\lfloor \frac{n}{2^d}\right\rfloor.
\end{align*}
Now, noting that the energy obtained due to interaction between points both of which are outside $v_1$ is the same in $\cP_1$ and $\cP_2$, one has  $H(\cP_2)-H(\cP_1)=A_1+A_2$, where $A_1$ is the difference in $\cP_1$ and $\cP_2$ of the contribution of pairs of points both of which lie in $v_1$.  Similarly $A_2$  is the difference in $\cP_1$ and $\cP_2$ of the  contribution of pairs of points, only one of which is in $v_1.$ Now by the definition of the hierarchical potential, the latter is only the interaction of the extra point in $\cP_2\mid_{v_1}$ not in $\cP_1\mid_{v_1}$, with all the points in $\cP_2\mid_{v_j}$ for $j\neq 1.$ Since $\cP_1$ and $\cP_2$ agree outside $v_1,$ it follows that the number of such points is $$n- \cP_1(v_1)=n-\left\lfloor \frac{n}{2^d}\right\rfloor.$$
Let $x_{n+1}\in v$ be the newly added point and $y$ be any of the $n-\lfloor \frac{n}{2^d}\rfloor$ points mentioned above. 
Then $w(x_{n+1},y)=1$ by definition of the potential. 
Thus $$A_2=2(n-\left\lfloor \frac{n}{2^d}\right\rfloor)$$ (the extra factor $2$ is because the energy functional $H_n$ counts $w(x_{n+1},y)$ as well as $w(y,x_{n+1})$).
We now claim $A_1=2^{d-2}D_{\lfloor \frac{n}{2^d}\rfloor}.$ To see this, note that by the recursive nature of the ground state partition, the partitions $\cP_1$ and $\cP_2$ attain the ground states with ${\lfloor \frac{n}{2^d}\rfloor}$ and ${\lfloor \frac{n}{2^d}\rfloor}+1$ particles respectively. Moreover since $v$ is a cube of side length $1/2,$ due to the nature of the potential 
\begin{align*}
H(\cP_{1}\mid_{v})=2^{d-2}L_{{\left\lfloor \frac{n}{2^d}\right\rfloor}} \text{ and similarly }
H(\cP_{2}\mid_{v})=2^{d-2}L_{{\left\lfloor \frac{n}{2^d}\right\rfloor}+1}.
\end{align*}
Thus $A_1=2^{d-2}\left(L_{{\left\lfloor \frac{n}{2^d}\right\rfloor}+1}-L_{{\left\lfloor \frac{n}{2^d}\right\rfloor}}\right)=2^{d-2}D_{\left\lfloor \frac{n}{2^d}\right\rfloor}.$ Putting the above together the lemma follows.
\end{proof}

Henceforth, it will be slightly convenient to work with the sequence $E_n=D_n-2n$ for $n=0,1,2,\ldots$. Using Lemma \ref{rec1}, it follows that, for $n\geq 2$,
\begin{equation}\label{rec2}
E_{n}=2^{d-2}E_{\left\lfloor \frac{n}{2^d}\right\rfloor}+(2^{d-1}-2)\left\lfloor \frac{n}{2^d}\right\rfloor.
\end{equation}

To solve the above recursion it will be cleaner to write $n$ in base $2^d.$ Let $n=\sum_{i=0}^kc_i2^{di}$ where  $0\le c_i\le2^d-1$ for all $i\le k$ and $c_k>0.$
We now have the following lemma.
\begin{lem}\label{l:dn} For $d\geq 3$ and $n\geq 1$,
$$
E_n=C_d\left[n -\sum_{i=0}^k c_i 2^{{(d-2)}i}\right]$$ and hence 
$D_n=(C_d+2)n -C_d\sum_{i=0}^k c_i 2^{(d-2)i}$, 
where 
$C_d=\frac{2^{d-1}-2}{3\cdot2^{d-2}},$ was defined in Theorem \ref{gse}.
\end{lem}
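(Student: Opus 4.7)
The plan is to prove the formula for $E_n$ by strong induction on $n$, using the recursion \eqref{rec2} just established via Lemma \ref{rec1}; the statement for $D_n$ then follows immediately from the definition $D_n = E_n + 2n$.

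First I would dispose of the base cases $0 \le n < 2^d$, where $\lfloor n/2^d \rfloor = 0$, so \eqref{rec2} together with the conventions $E_0 = E_1 = 0$ forces $E_n = 0$. In this range the base-$2^d$ expansion reduces to $n = c_0$ with $k = 0$, and the claimed formula evaluates to $C_d(n - c_0) = 0$, matching. For the inductive step I would fix $n \ge 2^d$, write $n = \sum_{i=0}^k c_i 2^{di}$, and observe that $\lfloor n/2^d \rfloor = \sum_{j=0}^{k-1} c_{j+1} 2^{dj}$ is exactly the base-$2^d$ expansion obtained by dropping the lowest digit $c_0$. Applying the inductive hypothesis to $\lfloor n/2^d \rfloor$ and substituting into \eqref{rec2} expresses $E_n$ as a linear combination of $\lfloor n/2^d \rfloor$ and $\sum_{i=1}^k c_i 2^{(d-2)i}$.

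The algebraic heart of the argument is the identity $2^{d-2} C_d + (2^{d-1} - 2) = C_d \cdot 2^d$, which is immediate from the definition $C_d = (2^{d-1}-2)/(3 \cdot 2^{d-2})$; combined with the trivial $2^d \lfloor n/2^d \rfloor = n - c_0$, it collapses the coefficient of $\lfloor n/2^d \rfloor$ into $C_d(n - c_0)$, and reabsorbing $c_0$ as the $i=0$ term of the sum (using $2^{(d-2)\cdot 0} = 1$) gives the stated formula for $E_n$; then $D_n = E_n + 2n$ finishes. Since the lemma amounts to solving a linear recursion whose inhomogeneous term is $\lfloor n/2^d \rfloor$ and the closed form is already supplied, I do not anticipate any genuine obstacle; had the answer not been given, the natural ansatz $E_n = An + B \sum c_i 2^{\alpha i}$ substituted into \eqref{rec2} would force $\alpha = d - 2$ (so that the recursion preserves the shape of the digit sum) and then $A = -B = C_d$.
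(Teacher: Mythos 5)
Your proof is correct and rests on exactly the same ingredients as the paper's: the recursion \eqref{rec2} and the observation that $\lfloor n/2^d\rfloor$ is obtained by dropping the lowest base-$2^d$ digit. The only (cosmetic) difference is that you verify the closed form by induction, whereas the paper unrolls the recursion forward and sums the resulting geometric series; these are two equivalent ways of solving the same linear recursion.
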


\begin{proof}
The statement about $E_n$ follows by  applying recursion \eqref{rec2} repeatedly and using the boundary condition that $E_0=0$. To see this, observe that $\left\lfloor \frac{n}{2^d}\right\rfloor=\sum_{i=0}^{k-1}c_{i+1}2^{di}$ and hence from the recursion,
\begin{eqnarray*}
E_n&=&(2^{d-1}-2)\left[\sum_{i=0}^{k-1}c_{i+1}2^{di}+2^{d-2}\sum_{i=0}^{k-2}c_{i+2}2^{di}+2^{2(d-2)}\sum_{i=0}^{k-3}c_{i+3}2^{di}+\ldots\right]\\
&=&(2^{d-1}-2)\left[\sum_{i=1}^kc_i2^{(d-2)(i-1)}(1+4+4^2+\ldots+4^{i-1})\right]\\
&=&\frac{2^{d-1}-2}{3}\left[\sum_{i=1}^kc_i2^{(d-2)(i-1)}(4^i-1)\right]\\
&=&C_d\left[\sum_{i=0}^k c_i2^{di}-\sum_{i=0}^k c_i2^{(d-2)i}\right]\\
&=&C_d\left[n -\sum_{i=0}^k c_i 2^{{(d-2)}i}\right]\,.
\end{eqnarray*}

The second statement follows by using the  relation between $D_n$ and $E_n.$
\end{proof} 

We can now finish the proof of Theorem \ref{gse}.
\begin{proof}[Proof of Theorem \ref{gse}] This follows directly from the expression of $D_n$ in Lemma \ref{l:dn} by computing $\sum_{i=1}^{n-1}D_i$ to get $L_n.$ 
\end{proof}

\begin{rmk} It is instructive to note that in the special cases when $n=c_k2^{dk}$ for some $1\leq c_k<2^d,$ one obtains $$L_{n}=\frac{ (C_d+2) n^2}{2}- C_dn 2^{(d-2)k}\left(\frac{(2^d-1)2^{d-1}}{2^{d-2}-1} + \frac{(c_{k}-1)}{2}\right)+O(n)\,,$$
where $C_d$ is as in Theorem \ref{gse}. Now by choosing different values of $1\le c_k\le 2^d-1$ and $k$ going to infinity, the above sequence of examples show that  $$\frac{L_{n}-\frac{(C_d+2)n^2}{2}}{n^{\frac{2d-2}{d}}}=\Theta(1)$$ but does not have a limit as $n\to \infty$.  
\end{rmk}

Before jumping in to the proof of Theorems \ref{marigid}, \ref{keyrigid}, as a warm up, we see how the above estimates already allow us to obtain a sharper estimate of $Z(n,\beta)$ than the one appearing in \cite{cha}.
\begin{lem}\label{linearcorrection}There exists an universal constant $C$ such that for all large enough $n$ and any $\beta$ the following holds,
$$-\beta L_n -Cn\le \log Z(n,\beta)\le -\beta L_n$$
\end{lem}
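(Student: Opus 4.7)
The plan has two parts. The upper bound is immediate: since $H_n(\bx) \ge L_n$ for every admissible configuration $\bx \in [0,1)^{dn}$, and the unit cube has volume $1$,
$$Z(n,\beta) = \int_{[0,1)^{dn}} e^{-\beta H_n(\bx)} \, d\bx \le e^{-\beta L_n}.$$

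For the lower bound, my strategy is to restrict the integral to the set $\mathcal{S}_n := \{\bx : H_n(\bx) = L_n\}$ of ground state configurations. On $\mathcal{S}_n$ the integrand is the constant $e^{-\beta L_n}$, which gives $Z(n,\beta) \ge e^{-\beta L_n} \cdot \mathrm{Leb}(\mathcal{S}_n)$, so it suffices to prove $\mathrm{Leb}(\mathcal{S}_n) \ge e^{-Cn}$ for some $C = C(d)$. To this end, pick the depth $K := \lceil \log_{2^d} n \rceil$, which satisfies $n \le 2^{dK} \le 2^d \cdot n$. By Theorem~\ref{gsc} and the construction \eqref{induc0}, the canonical ground state partition $\cP_{\min}$ assigns $0$ or $1$ point to each cube in $\cD_K$; let $\mathcal{C} \subset \cD_K$ be the collection of $n$ cubes receiving one point. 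Any ordered tuple obtained by choosing a bijection from $\{1,\ldots,n\}$ to $\mathcal{C}$ and then placing each point anywhere inside its assigned cube (of volume $2^{-dK}$) has at most one point per cube at every level $\ge K$ and induces $\cP_{\min}$ at every level $\le K$; both conditions of Theorem~\ref{gsc} are therefore satisfied at every level, so such a configuration lies in $\mathcal{S}_n$. This yields
$$\mathrm{Leb}(\mathcal{S}_n) \ge n! \cdot 2^{-dKn}.$$

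Applying Stirling's inequality $n! \ge (n/e)^n$ together with $2^{dK} \le 2^d n$ gives
$$\mathrm{Leb}(\mathcal{S}_n) \ge \left( \frac{n}{e \cdot 2^{dK}} \right)^n \ge (e \cdot 2^d)^{-n} = e^{-(1+d\log 2)n},$$
so taking $C = 1 + d \log 2$ completes the proof. I do not foresee a substantive obstacle here: once Theorem~\ref{gsc} identifies the ground states explicitly, the lower bound reduces to a direct volume count together with a single application of Stirling. The only verification worth highlighting is that the configurations constructed above genuinely lie in $\mathcal{S}_n$, which is precisely where the characterization in Theorem~\ref{gsc} is used. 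Conceptually, the combinatorial description of ground states converts the problem of estimating $Z(n,\beta)$ to leading order into counting a simple family of configurations, which is why Theorems~\ref{gse}--\ref{gsc} yield a sharper constant term than the Cauchy--Schwarz approach of \cite{cha}.
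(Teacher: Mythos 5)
Your proof is correct and follows essentially the same route as the paper: the upper bound is the trivial pointwise bound $H_n \ge L_n$, and the lower bound restricts the integral to ground-state configurations at depth $K=\lceil \log_{2^d} n\rceil$, counting $n!$ labelings times the volume $2^{-dKn}$ and applying Stirling, yielding the same constant $C = 1+d\log 2$. The paper's argument is identical in substance (it phrases the volume bound as $2^{-dK}\ge (2^d n)^{-1}$ rather than invoking $2^{dK}\le 2^d n$ at the end, but this is cosmetic).
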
 
Before proving the above, we record a lemma which follows immediately from the proof of Theorem \ref{gsc}.

\begin{lem}Given $n$, let $k$ be such that $2^{d(k-1
)}<n\le 2^{dk}.$ Then there exists a configuration $\cP_{\rm min}$ as in \eqref{induc0} where each element of $\cD_k$ contains at most one point.
\end{lem}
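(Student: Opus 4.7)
The plan is a direct induction along the levels of the dyadic tree, exploiting the explicit recursive description of $\mathcal{P}_{\min}$ in \eqref{induc0}. Let $M_j$ denote the maximum, over $v \in \cD_j$, of the number of points that $\mathcal{P}_{\min}$ assigns to $v$. The goal becomes to show $M_k \le 1$, since this is exactly the statement that every element of $\cD_k$ contains at most one point.

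The key one-step estimate is the recursion $M_j \le \lceil M_{j-1}/2^d \rceil$. This follows immediately from \eqref{induc0}: if a cube $v \in \cD_{j-1}$ contains $m$ points, then its $2^d$ children in $\cD_j$ each receive either $\lfloor m/2^d \rfloor$ or $\lfloor m/2^d \rfloor + 1$ points by construction, hence at most $\lceil m/2^d \rceil$. Taking the maximum over all $v \in \cD_{j-1}$ yields the claimed recursion.

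Starting from $M_0 = n \le 2^{dk}$, I would prove by induction on $j$ that $M_j \le 2^{d(k-j)}$ for all $0 \le j \le k$. The base case $j=0$ is the hypothesis. For the inductive step, if $M_{j-1} \le 2^{d(k-j+1)}$ then
\[
M_j \;\le\; \lceil M_{j-1}/2^d \rceil \;\le\; \lceil 2^{d(k-j+1)}/2^d \rceil \;=\; 2^{d(k-j)},
\]
using that $2^{d(k-j)}$ is already an integer. Specialising to $j = k$ gives $M_k \le 2^0 = 1$, as desired.

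There is essentially no obstacle: the uniform-spreading rule built into \eqref{induc0}, combined with the choice of $k$ making $n \le 2^{dk}$, forces $\mathcal{P}_{\min}$ to distribute its particles into distinct cubes of side $2^{-k}$. The strict lower bound $2^{d(k-1)} < n$ in the hypothesis is not needed for this direction; it would only serve to show that $k$ is the \emph{smallest} level at which such a one-point-per-cube configuration can be realised.
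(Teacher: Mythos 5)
Your proof is correct and follows essentially the same route as the paper: an induction down the dyadic levels driven by the splitting rule \eqref{induc0}. The only cosmetic difference is that you track the maximum occupancy $M_j$ directly via $M_j\le\lceil M_{j-1}/2^d\rceil$, whereas the paper dominates $\cP_{\min}$ for $n$ points by the configuration for $2^{dk}$ points, which assigns exactly one point to each cube of $\cD_k$; both reduce to the same observation that each level divides the count by $2^d$ (rounded up).
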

Note that as mentioned before $\cP_{\rm min}$ is not necessarily unique. 
\begin{proof} 
We will compare the optimal configurations $\cP_1$ and $\cP_2$ described recursively by \eqref{induc1} for $n$ and $m=2^{dk}.$
As in \eqref{induc1}, for $v_1,v_2,\ldots, v_{2^d}$ denoting the members of $\cD_1$ arranged in some order, 
\begin{equation}\label{induc1}
\cP_{1}(v_i)=\left \{ \begin{array}{cc}
\lfloor \frac{n}{2^d}\rfloor+1& \,1\le i \le r,\\
\lfloor \frac{n}{2^d}\rfloor &   r< i.
\end{array}
\right.
\end{equation}
where $n=2^d\lfloor \frac{n}{2^d}\rfloor+r.$ Now similarly note that for all $v_i$,
$\cP_{2}(v_i)=2^{d(k-1)}$. Thus applying the recursive structure of the definitions of $\cP_1$ and $\cP_2$ and simple induction, it follows that for all $v\in \cD_k$ $$\cP_1(v)\le \cP_2(v)=1$$ and hence we are done.
\end{proof}
We now provide the proof of Lemma \ref{linearcorrection}.
\begin{proof} 
Since $\log Z(n,\beta)\le -\beta L_n$ is trivially true; we will focus on the other inequality. 
Let $k$ be as in the previous lemma and consider $\cP_2$. Let $\cD_{k,{\rm occ}} \subset \cD_k$ be the set of all cubes $v$ such that $\cP_2(v)=1$
Since the particles are exchangeable, there are $n! $ possible ways to arrange the above. Moreover the volume of every sub-cube $v$ is $\frac{1}{2^{dk}}\ge \frac{1}{2^d n}.$
Thus using \eqref{partition} it follows that 
$
Z_n \ge e^{-\beta L_n} \frac{n!}{(2^dn)^n}
\ge e^{-\beta L_n -(\log 2^d +1)n}.
$
\end{proof} 

We are now ready to proceed with the key section in this paper, proving Theorem \ref{keyrigid}.

\section{Strong hyperuniformity bounds for dyadic cubes} \label{pot1}
This section contains many of the new ideas in the paper. However it is quite long and technical and hence for the reader's benefit we start with the following roadmap describing what the various subsections achieve. 
\begin{enumerate}
\item In Subsection \ref{s1} we record some useful definitions and lemmas to be used throughout the rest of the section. 
\item Subsection \ref{s2} contains some of the key new techniques in this paper which are used to perform a refined Peierls argument type Energy vs Entropy comparison  as outlined in Section \ref{iop} to establish formal versions of \eqref{prelim1} and \eqref{ratioestimate} stated earlier.  These are then used to prove \eqref{key20} which estimates contributions from configurations which are not ground states beyond level $m_0=\Theta(\log\log n)$. The key result in this section is Proposition \ref{l:entropy1}.

\item In Subsection \ref{s3}, using the above results, we obtain a precise estimate of the partition function comparing it with the contribution of the ground states as was stated in \eqref{keyest1}. 

\item In Subsection \ref{s4}, putting everything together we finish the proof of Theorem \ref{keyrigid}. 
\end{enumerate}
We start with some more notations.  
We will let $\Omega=\Omega_n$ be the set of all partitions of $n$ into the different sub-cubes of the tree $\cD.$ Furthermore, for any set $A$ of configurations, we define
\begin{equation}\label{partcont}
Z(A):=\int_{\mathbf{x}\in A} e^{-\beta H(\mathbf{x})}d\mathbf{x}\,.
\end{equation} 
We shall call the various hierarchical steps as \textit{levels}, for $i=0,1,2,\ldots$, the $i$-th level has sub-cubes of side length $2^{-i}$. In the tree notation, all vertices that are at a graph distance of $i$ from the root are said to be at level $i$, for $i=0,1,2,\ldots$.  For any dyadic sub-cube $v$, $ \mathrm{Lev}(v)$ will denote the level of the cube. 
\subsection{Some useful lemmas}\label{s1} In this subsection, we record and prove some lemmas that will be needed in the following subsections. Skipping the proofs in this subsection at first read will not affect the ability to perceive the logic of the arguments in the subsequent sections. For convenience, we introduce another notation; consider the base $2^d$ representation of $n$, that is
$n=\sum_{i=0}^kc_i2^{id}\,,$
where $0\leq c_i\leq 2^d-1$ and $c_k>0$. Define
\begin{equation}\label{e:defg}
\gamma(n):=\sum_{i=0}^kc_i2^{i(d-2)}\le 4n^{\frac{d-2}{d}}\,,
\end{equation}
where the last inequality follows by using that the function $x\to x^{\frac{d-2}{d}}$ is concave and hence sub-additive.
We next show that the function $\gamma(\cdot)$ is sub-additive.

\begin{lem}\label{l:gn}For $d\geq 3$, $n,r\in \N$,
\[\gamma(n+r)\leq \gamma(n)+\gamma(r)\,.\]
\end{lem}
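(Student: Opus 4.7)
The plan is to interpret $\gamma$ as a weighted digit-sum functional on \emph{formal} base-$2^d$ expansions (with no upper bound on the digits) and then track how $\gamma$ changes as we transform the digit-wise sum of the expansions of $n$ and $r$ into the canonical base-$2^d$ expansion of $n+r$ by repeated carrying.

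First, I would introduce the notation $\tilde{\gamma}\big(\sum_{i\ge 0} a_i 2^{id}\big):=\sum_{i\ge 0} a_i 2^{i(d-2)}$ for any finitely supported sequence of nonnegative integers $(a_i)$, so that $\tilde{\gamma}$ agrees with $\gamma$ whenever $0\le a_i\le 2^d-1$. The key property is linearity in the digit vector: if $n=\sum_i c^{(n)}_i 2^{id}$ and $r=\sum_i c^{(r)}_i 2^{id}$ are the proper base-$2^d$ expansions and we set $a_i:=c^{(n)}_i+c^{(r)}_i$, then
\[
\sum_i a_i 2^{id}=n+r,\qquad \tilde{\gamma}\Big(\sum_i a_i 2^{id}\Big)=\gamma(n)+\gamma(r).
\]
Of course the $a_i$'s may now exceed $2^d-1$, so this is only an \emph{improper} expansion of $n+r$.

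Next I would analyze the carry move that reduces an improper expansion towards the canonical one: if some $a_i\ge 2^d$, replace $a_i$ by $a_i-2^d$ and $a_{i+1}$ by $a_{i+1}+1$. This preserves the underlying integer because $2^d\cdot 2^{id}=2^{(i+1)d}$. Its effect on $\tilde{\gamma}$ is
\[
\Delta\tilde{\gamma}=2^{(i+1)(d-2)}-2^d\cdot 2^{i(d-2)}=2^{i(d-2)}\bigl(2^{d-2}-2^d\bigr)=-3\cdot 2^{i(d-2)+d-2}<0,
\]
where negativity uses $d\ge 3$. So every carry \emph{strictly} decreases $\tilde{\gamma}$. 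The digit sum $\sum a_i$ also decreases by $2^d-1\ge 7$ with each carry, which guarantees that the carry process terminates in finitely many steps; by uniqueness of the base-$2^d$ expansion of $n+r$, the terminal digit vector must be $(c^{(n+r)}_i)$, at which point $\tilde{\gamma}$ equals $\gamma(n+r)$.

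Combining these observations,
\[
\gamma(n+r)=\tilde{\gamma}\bigl(\text{final vector}\bigr)\le \tilde{\gamma}\bigl(\text{initial vector}\bigr)=\gamma(n)+\gamma(r),
\]
which is the desired inequality. The entire argument is elementary bookkeeping, and there is no real obstacle; the only substantive point is verifying the sign of $\Delta\tilde{\gamma}$, which is precisely where the hypothesis $d\ge 3$ enters. As a sanity check, in the boundary case $d=2$ one would get $\Delta\tilde{\gamma}=0$, consistent with the fact that for $d=2$ the function $\gamma(n)=n$ is additive; for $d\ge 3$ we obtain strict submultiplicative-style decrease at each carry, yielding the stated subadditivity.
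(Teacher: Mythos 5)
Your argument is correct, and it takes a cleaner route than the paper's. The paper first proves the special case $\gamma(m+1)\leq \gamma(m)+1$ by analyzing the cascade of carries triggered by adding $1$, then writes $r=\sum_i b_i 2^{id}$ and adds the blocks $r_i=b_i2^{id}$ to $n$ one at a time, with an explicit case analysis according to whether $a_i+b_i$ overflows. Your proof replaces this piecemeal iteration by a single global bookkeeping argument: extend $\gamma$ to improper expansions, start from the digit-wise sum (where $\tilde\gamma$ equals $\gamma(n)+\gamma(r)$ exactly), and observe that each carry changes $\tilde\gamma$ by $2^{i(d-2)}(2^{d-2}-2^d)<0$ while strictly decreasing the digit sum, so the process terminates at the unique canonical expansion of $n+r$ with $\tilde\gamma$ only having decreased. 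Both proofs ultimately rest on the same arithmetic fact --- a carry lowers the weighted digit sum because $2^{d-2}<2^d$ --- but your formulation isolates it as a single monotonicity statement and avoids the case analysis, which is a genuine simplification. One small correction to your closing sanity check: for $d=2$ one has $\gamma(n)=\sum_i c_i$ (the base-$4$ digit sum), not $\gamma(n)=n$, and the carry increment is $\Delta\tilde\gamma=1-4=-3\neq 0$; in fact $2^{(i+1)(d-2)}-2^d\cdot 2^{i(d-2)}=-3\cdot 2^{i(d-2)+d-2}$ is strictly negative for every $d\geq 1$, so the hypothesis $d\geq 3$ is not actually where the sign is decided. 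This remark is an aside and does not affect the validity of the proof for $d\geq 3$.
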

\begin{proof}First we prove that for any $m$, with $m=\sum_{i=0}^s a_i2^{id}$, 
\begin{equation}\label{e:add1}
\gamma(m+1)\leq \gamma(m)+1\,.
\end{equation}
To see this, observe that, if the smallest $i\in \{0,1,2,\ldots,s\}$ such that $a_i<2^d-1$ is  $t$, then
\[\gamma(m+1)=\sum_{i=t+1}^s a_i2^{i(d-2)}+(a_t+1)2^{(d-2)t}\,.\]
Now, if $t=0$, then clearly
$\gamma(m+1)=\gamma(m)+1\,.$
If $t\geq 1$, then for $d\geq 3$,
$$(2^d-1)\sum_{i=0}^{t-1}2^{i(d-2)}=\frac{(2^d-1)(2^{t(d-2)}-1)}{2^{d-2}-1}\geq 2^{(d-2)t}\,.$$
Thus,
\begin{align*}
\gamma(m+1)&=\sum_{i=t+1}^s a_i2^{i(d-2)}+(a_t+1)2^{(d-2)t}=\sum_{i=t}^s a_i2^{i(d-2)}+2^{(d-2)t}\\
&\leq \sum_{i=t}^s a_i2^{i(d-2)}+ (2^d-1)\sum_{i=0}^{t-1}2^{i(d-2)}=\sum_{i=0}^s a_i2^{i(d-2)}=\gamma(m)\,.
\end{align*}

Now, consider the base $2^d$ representations of $r$ as
$r=\sum_{i=0}^\l b_i2^{id}\,,$ and let 
$r_i=b_i2^{id} \mbox{ for } i=0,1,\ldots,\l\,.$
Clearly $r=\sum_{i=0}^\l r_i$  and 
\begin{equation}\label{e:gr}
\gamma(r)=\sum_{i=0}^\l \gamma(r_i)\,.
\end{equation}
Now, for any $m$, we show that for all $i=0,1,\ldots,\l$,
\begin{equation}\label{e:oneatatime}
\gamma(m+r_i)\leq \gamma(m)+\gamma(r_i)\,.
\end{equation}
Let $m=\sum_{i=0}^s a_i2^{id}$. If $a_i+b_i\leq 2^d-1$, then $\gamma(m+r_i)=\gamma(m)+\gamma(r_i)\,.$
On the other hand, if $a_i+b_i=2^d+q$ with $0\leq q\leq 2^d-2$, define $m'_i=\sum_{j=0}^{s-i-1} a_{i+1+j}2^{jd}$, and observe that
\begin{align*}
\gamma(m+r_i)&=\gamma(m_i'+1)2^{(i+1)(d-2)}+q2^{i(d-2)}+ \sum_{j=0}^{i-1}a_j2^{j(d-2)},\\
&\leq  (\gamma(m_i')+1)2^{(i+1)(d-2)}+q2^{i(d-2)}+ \sum_{j=0}^{i-1}a_j2^{j(d-2)},\\
&= \gamma(m_i')2^{(i+1)(d-2)}+(2^{d-2}+q)2^{i(d-2)}+ \sum_{j=0}^{i-1}a_j2^{j(d-2)},\\
&< \gamma(m_i')2^{(i+1)(d-2)}+(a_i+b_i)2^{i(d-2)}+ \sum_{j=0}^{i-1}a_j2^{j(d-2)}=\gamma(m)+\gamma(r_i)\,,
\end{align*}
where the first inequality follows from \eqref{e:add1} and the second inequality follows because $a_i+b_i=2^d+q>2^{d-2}+q$.
Finally, by a repeated application of \eqref{e:oneatatime}
\begin{align*}
\gamma(n+r)=\gamma(n+r_0+r_1+\ldots+r_\l)
\leq \gamma(n+\sum_{i=0}^{\l-1}r_i)+\gamma(r_\l)\leq \ldots
\leq  \gamma(n)+\sum_{i=0}^\l \gamma(r_i)=\gamma(n)+\gamma(r)
\end{align*}
where the last equality follows from \eqref{e:gr}.
\end{proof}
Using the above we have, 
\begin{lem}\label{l:d-2byd}For all $n,r\in \N$,
\[\gamma(n+r)\leq \gamma(n)+4r^{\frac{d-2}{d}}\,.\]
\end{lem}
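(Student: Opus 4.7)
The plan is to combine the subadditivity estimate from Lemma \ref{l:gn} with the universal bound on $\gamma$ noted immediately after its definition in \eqref{e:defg}. Recall that by the concavity (hence subadditivity) of $x \mapsto x^{(d-2)/d}$ on $[0,\infty)$, one has $\gamma(m) \le 4 m^{(d-2)/d}$ for every $m \in \mathbb{N}$, with the constant $4$ coming from the crude bound $\sum_{i=0}^k c_i 2^{i(d-2)} \le 4\bigl(\sum_{i=0}^k c_i 2^{id}\bigr)^{(d-2)/d}$.

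The proof is then essentially a one-line chain: apply Lemma \ref{l:gn} to split $\gamma(n+r)$ into $\gamma(n) + \gamma(r)$, then apply the pointwise bound $\gamma(r) \le 4 r^{(d-2)/d}$ to the second summand. Concretely,
\[
\gamma(n+r) \;\le\; \gamma(n) + \gamma(r) \;\le\; \gamma(n) + 4 r^{\frac{d-2}{d}}.
\]
No further work is required, and there is no real obstacle — the substantive content has already been established in Lemma \ref{l:gn} (the subadditivity), which in turn is proved via the base-$2^d$ digit manipulation culminating in \eqref{e:oneatatime}. The present lemma simply packages that subadditivity in a form that is more convenient for later arguments, where one wants to control $\gamma(n+r) - \gamma(n)$ by a power of $r$ that is uniform in $n$ rather than by $\gamma(r)$ itself, which depends on the digit expansion of $r$ in an awkward way.
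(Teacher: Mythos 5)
Your proof is correct and is exactly the paper's argument: apply the subadditivity from Lemma \ref{l:gn} and then the bound $\gamma(r)\leq 4r^{\frac{d-2}{d}}$ from \eqref{e:defg}. Nothing further is needed.
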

\begin{proof}By Lemma \ref{l:gn} and \eqref{e:defg},
$\gamma(n+r)-\gamma(n)\leq \gamma(r)\leq 4r^{\frac{d-2}{d}}\,.$
\end{proof}

To prove results of the form \eqref{key20}, we need estimates on the number of different ground states.   This is formally defined next. 
Let 
\begin{equation}\label{baselevel}
h_n:=\left\lceil\frac{\log n}{\log 2^d}\right\rceil\,.
\end{equation}
Note that by Theorem \ref{gsc}, for any ground state, each cube at level $h_n$ has at most one particle. We will call $h_n$ as the \textit{base level}. If we enumerate the sub-cubes at the base level (there are $2^{dh_n}$ of them), then the number of points in each sub-cube (which is either $0$ or $1$) determines the ground state configuration.
\begin{defn}
Let $b_n$ denote the number of ground state configurations with $n$ points. 
\end{defn}
Note that the positions of the points in the base sub-cubes do not matter. Also, here we do not enumerate the points, so that the permutation of the points going to the various sub-cubes is not considered. Thus, $b_n$ denotes the number of ways of choosing the $n$ dyadic sub-cubes at the base level that are occupied in a ground state configuration.
Then for any ground state, $2^{-ndh_n}$ is the product of the volumes of the $n$ occupied sub-cubes at the base level, each of which contains one of the $n$ points. Armed with these notations, we state the following technical lemma comparing $b_n, h_n$ and $b_{n+1}, h_{n+1}$ which will be used in the later sections.
\begin{lem}\label{l:bn}There exists a positive constant $c$ such that for all $n\in\N\cup \{0\}$,
 \[e^{-c\log (n+2)}\leq \frac{b_{n+1}2^{-(n+1)dh_{n+1}}}{b_n2^{-ndh_n}}\leq e^{c\log(n+2)}\,.\] 
As an immediate consequence we get for any $k\in \Z,$ with $n+k\ge 0,$ for a slightly large constant $c,$
\begin{equation}\label{iter123}
 \frac{b_{n+k}2^{-(n+k)dh_{n+k}}}{b_n2^{-ndh_n}}\leq e^{c|k|\log (n+2)}\,\, \text{for }k< 0,\,\, \frac{b_{n+k}2^{-(n+k)dh_{n+k}}}{b_n2^{-ndh_n}}\leq e^{c|k|\log (n+k+2)}\,\,\,\text{for }k\ge 0\,.
\end{equation} 
\end{lem}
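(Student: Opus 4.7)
The plan is to extract a clean multiplicative recursion for $f_n := b_n\, 2^{-ndh_n}$, and then read off both the one-step and the iterated bounds from it by elementary estimates. First I would establish the recursion. Writing $n = 2^d q + r$ with $0 \le r \le 2^d-1$, Theorem \ref{gsc} says that in every ground state exactly $r$ of the $2^d$ level-$1$ sub-cubes carry $q+1$ points and the remaining $2^d-r$ carry $q$ points, with $\binom{2^d}{r}$ choices of which. The subtle point is that inside a level-$1$ sub-cube holding $m \in \{q, q+1\}$ points, the local base level $h_m$ is $\le h_n -1$ but may be strictly smaller; listing its ground states at the global base level $h_n$ multiplies $b_m$ by $2^{dm(h_n - 1 - h_m)}$, coming from iteratively splitting each singly-occupied local base cube into one of $2^d$ sub-cubes. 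Using $(q+1)r + q(2^d - r) = n$ to collect exponents, the identity reduces to
\[
f_n \;=\; \binom{2^d}{r}\, 2^{-dn}\, f_{q+1}^{\,r}\, f_{q}^{\,2^d - r}.
\]

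From this I would compute $\rho_n := f_{n+1}/f_n$ case by case. If $r \le 2^d - 2$, then $n+1$ has base-$2^d$ decomposition $(q, r+1)$ and dividing the two instances of the recursion gives $\rho_n = \frac{2^d - r}{r+1}\cdot 2^{-d}\cdot \rho_q$. If $r = 2^d - 1$, then $n+1 = 2^d(q+1)$ has decomposition $(q+1, 0)$, and an analogous computation gives $\rho_n = 2^{-2d}\, \rho_q$. In either case $\rho_n = c_n\, \rho_{\lfloor n/2^d\rfloor}$ with a constant $c_n$ depending only on $r$ and $d$, lying in $[2^{-2d}, 1]$ (the upper endpoint attained at $r = 0$, the lower at $r = 2^d-1$).

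Iterating this identity $h_n$ times reaches $\rho_0 = f_1/f_0 = 1$ and yields $\rho_n \in [2^{-2dh_n}, 1]$; combined with $h_n \le \log_{2^d}(n) + 1$ this reads $\rho_n \in [C(n+2)^{-2}, 1]$, which is the one-step bound for a sufficiently large $c$ (the upper bound $\rho_n \le 1 \le e^{c\log(n+2)}$ being free). The iterated bounds then follow by telescoping: for $k\ge 0$, $f_{n+k}/f_n = \prod_{j=0}^{k-1}\rho_{n+j} \le 1 \le e^{ck\log(n+k+2)}$; for $k = -m < 0$ the lower bound inverts to $\rho_i^{-1} \le C'(i+2)^2$, so $f_{n-m}/f_n = \prod_{j=1}^{m}\rho_{n-j}^{-1} \le (C'(n+2)^2)^m \le e^{cm\log(n+2)}$.

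The only genuinely delicate point is the base-level discrepancy in the first step: care is needed to absorb the $2^{dm(h_n - 1 - h_m)}$ factor cleanly so that the recursion for $f_n$ takes the tidy multiplicative form above. Once this is in place, everything else is elementary counting.
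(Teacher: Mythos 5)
Your proof is correct, but it follows a genuinely different route from the paper's. The paper argues by a one-point perturbation: it splits into the cases $h_{n+1}=h_n$ and $h_{n+1}=h_n+1$; in the first case it bounds the number of ways a ground state of $n$ points extends to (or is obtained by deleting a point from) a ground state of $n+1$ points, giving $b_{n+1}\le b_n 2^{dh_n}$ and $b_n\le b_{n+1}2^{dh_n}\le b_{n+1}2^dn$; in the second case ($n=2^{dh_n}$) it computes $b_n$ and $b_{n+1}$ exactly and gets the ratio $(2^d-1)/2^{d+1}$; the iterated bound \eqref{iter123} is then obtained, as in your argument, by telescoping the one-step bound. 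You instead exploit the self-similarity of the ground states from Theorem \ref{gsc} to derive the exact recursion $f_n={2^d\choose r}\,2^{-dn}f_{q+1}^{\,r}f_q^{\,2^d-r}$ and hence the exact identity $\rho_n=c_n\rho_{\lfloor n/2^d\rfloor}$ with $c_n\in[2^{-2d},1]$, which you then iterate down the $O(h_n)$ levels of the tree. I checked the delicate point you flag: the correction factor $2^{dm(h_n-1-h_m)}$ for the mismatch between local and global base levels is exactly right, the required inequalities $h_q\le h_n-1$ and (when $r\ge 1$) $h_{q+1}\le h_n-1$ do hold, and the two case computations for $\rho_n$ are correct. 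Your approach buys an exact multiplicative formula for $\rho_n$, and in particular the clean fact $\rho_n\le 1$ (which the paper also notes parenthetically in its first case); the price is the extra bookkeeping around the base-level discrepancy, which the paper's shorter perturbative argument sidesteps entirely. Either argument suffices for the lemma as stated.
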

\begin{proof} We first prove the first statement. Note that $h_{n+1}-h_n\in\{0,1\}.$ Thus we break the proof into two cases.\\

\noindent
$\bullet$ $h_{n+1}=h_n:$  Given a ground state with $n$ particles, we bound the number of ways to place the $n+1^{th}$ point to obtain a ground state with $n+1$ particles. At every level, there are at most $2^d$ sub-cubes where the extra particle can be placed. Since $h(n)\leq c\log n$ for some constant $c>0$, we have the upper bound.  (In fact, one gets one as an upper bound in this case.) For the lower bound, we start with a ground state of size $n+1$ and remove a point to obtain one of size $n.$  We proceed by choosing one of the $2^{h_nd}$ sub-cubes in $\cD_{h_{n+1}}$ and removing the point in it, if it has one. So \[\frac{b_{n}}{b_{n+1}}\leq 2^{h_nd}=2^d2^{(h_n-1)d}\leq 2^dn\,,\]
since $2^{d(h_n-1)}\leq n\leq 2^{dh_n} $. This gives the lower bound.

\noindent
$\bullet$ $h_{n+1}=h_n+1:$ In this case, $n=2^{h_nd}$, $b_n=1$, $b_{n+1}=2^{-1}n(2^d-1)(2^d)^{n}$.  To see $b_{n+1}$, observe that, there are $n$ choices for the sub-cube where the extra point is placed, and in the next level, each point that singly occupies a sub-cube choses one of the $2^d$ many children to move to, and the only sub-cube that has two points in it has ${{2^d}\choose{2}}$ choices for the two children that will each contain one point. Thus 
\[\frac{b_{n+1}2^{-(n+1)dh_{n+1}}}{b_n2^{-ndh_n}}=\frac{2^d-1}{2^{d+1}}\,,\]
which satisfies both the inequalities. 

The second statement now follows by iterating the first statement $|k|$ times.
\end{proof}

 \subsection{Key estimates: Entropy vs. energy comparisons}\label{s2} 
 The main goal of this subsection is to show that configurations that differ from ground state configurations at certain levels away from the root, do not contribute much to the partition function; in the process making precise the steps indicated in \eqref{prelim1} and \eqref{ratioestimate}. We start with a few more notations. Recall that for a partition $\cP$ and any sub-cube $v$, $\cP(v)$ denotes the number of points in $v$ under the partition $\cP$.
 For two partitions $\cP_1$ and $\cP_2$ that specify the number of points landing in the different dyadic cubes, if
 \[\cP_1(v)=\cP_2(v)\]
 for all vertices $v$ with $\mathrm{Lev}(v)\leq r$, they will be said to agree till level $r$. 
 If for any partition $\cP$, and any dyadic cube $v$, 
 \[\cP\mid_{v}=\cP^{(\cP(v))}_{\min}\,,\]
 where $\cP_{\min}^{(m)}$ is some ground state partition as defined in \eqref{induc0} and $\cP\mid_{v}$ is the partition $\cP$ restricted to the subtree rooted at $v$, we say that the subtree rooted at $v$ has a ground state configuration. If
 \[\cP\mid_{v}=\cP^{(\cP(v))}_{\min}\,,\]
 for all $v$ with $\mathrm{Lev}(v)=r$, we say that the partition has a ground state configuration at levels beyond $r$ (we will need to define further notation describing such situations, for the sake of exposition later in the article). 
 Also, for any finite sequence/vector $\bx=(x_1,x_2,\ldots,x_t)$, with $x_i\in \Z_{\geq 0}$, define the set of vectors obtained by permuting the coordinates; i.e.,
\[[\bx]= \left\{(x_{\sigma(1)},x_{\sigma(2)},\ldots,x_{\sigma(t)}): (\sigma(1),\ldots,\sigma(t)) \text{ is a permuation of } (1,2,\ldots,t)\right\} \,.\]
For two vectors $\bx_1$ and $\bx_2$ of the same size, define
\begin{equation}\label{e:dist}
\mathrm{dist}(\bx_1,\bx_2)=\sqrt{\min\{d_2^2(z_1,z_2): z_1\in [\bx_1], z_2\in [\bx_2]\}}\,,
\end{equation}
where $d_2(\cdot,\cdot)$ is the $L_2$ distance between two vectors. By a standard rearrangement inequality, the minimum is attained in the above expression when the vectors $\mathbf{x}_1$ and $\mathbf{x}_2$ are ordered similarly (say both in decreasing order). Also, given any vector $\bk=(k_1,k_2,\ldots,k_t)$ of the same size as that of $\bx$, we say
\begin{equation}\label{e:defsum}
[\bx]+\bk=[\by]\,,
\end{equation}
for some $\by=(y_1,\ldots,y_t)$ if
\[y_{\sigma'(i)}=x_{\sigma(i)}+k_i, \ \ \text{ for } i=1,2,\ldots,t\,,\]
where $\sigma$ and $\sigma'$ are two permutations of $(1,2,\ldots,t)$ such that $y_{\sigma'(1)}\geq y_{\sigma'(2)}\ldots\geq y_{\sigma'(t)}$ and $x_{\sigma(1)}\geq x_{\sigma(2)}\geq \ldots \geq x_{\sigma(n)}$. Clearly, then $\mathrm{dist}(\bx,\by)=\|\bk\|_2=\sqrt{\sum_{i=1}^t k_i^2}$.

For the following lemma, consider two partitions $\cP_1$ and $\cP_2$ that agree till level $m-1$. Let $\cP_1=(\mathbf{a}_1,\mathbf{a}_2,\ldots,\mathbf{a}_{2^{d(m-1)}})$, where $\mathbf{a}_i=(a_{i,1},\ldots,a_{i,2^d})$ denotes the number of points in the $2^d$ descendants of the $i$th sub-cube (from left to right in the natural planar embedding of the $2^{d}-$ary tree $\cD$) at level $m-1$. And, suppose $\cP_2=(\mathbf{e}_1,\mathbf{e}_2,\ldots,\mathbf{e}_{2^{d(m-1)}})$ has a ground state configuration at levels beyond $m-1$, where  $\mathbf{e}_i=(e_{i,1},\ldots,e_{i,2^d})$ denotes the number of points in the $2^d$ descendants of the $i$th sub-cube at level $m-1$.

Recalling from Section \ref{ground10}, the notation $H(\cP)$ for the value of the Hamiltonian for the partition $\cP$, the next lemma gives a quantitative lower bound on $H(\cP_1)-H(\cP_2)$.
\begin{lem}\label{l:energy}
There exists a positive constant $c'$ such that for all $m\in \N$, all $d\geq 3$, and all partitions $\cP_1,\cP_2$ defined as above,
\[H(\cP_1)\geq H(\cP_2)+c'2^{(d-2)m}\sum_i\mathrm{dist}^2(\mathbf{a}_i,\mathbf{e}_i)\,.\]
\end{lem}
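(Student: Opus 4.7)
The plan is to localize the statement to a single level-$(m-1)$ cube and then perform a clean energy comparison exploiting the discrete convexity of $L_n$. Because $\cP_1$ and $\cP_2$ agree through level $m-1$, every interaction between two points in distinct level-$(m-1)$ sub-cubes contributes identically to both Hamiltonians, so $H(\cP_1) - H(\cP_2) = \sum_i [H(\cP_1|_{v_i}) - H(\cP_2|_{v_i})]$. Fix $i$, write $N = N_i$, $\mathbf{a} = \mathbf{a}_i$, $\mathbf{e} = \mathbf{e}_i$, and set $M(\mathbf{a}) := (N^2 - \sum_l a_l^2) + 2^{d-2}\sum_l L_{a_l}$. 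Because $\cP_2$ is a ground state inside $v_i$, one has $H(\cP_2|_{v_i}) = 2^{(d-2)(m-1)} L_N = 2^{(d-2)(m-1)} M(\mathbf{e})$; and by the hierarchical scaling together with the fact that the interior of each level-$m$ child contributes at least $2^{(d-2)m} L_{a_l}$, one obtains $H(\cP_1|_{v_i}) \geq 2^{(d-2)(m-1)} M(\mathbf{a})$. Thus the lemma reduces to showing $M(\mathbf{a}) - M(\mathbf{e}) \geq c'\,2^{d-2}\,\mathrm{dist}^2(\mathbf{a},\mathbf{e})$.

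Order both vectors so that $\mathrm{dist}^2(\mathbf{a},\mathbf{e}) = \sum_l k_l^2$ with $k_l = a_l - e_l$; in particular $\sum_l k_l = 0$. Write $N = 2^d q + r$ and set $S = \{l : e_l = q+1\}$, so that $e_l \in \{q, q+1\}$ for all $l$. A direct expansion gives the identity
\[
\textstyle \sum_l a_l^2 - \sum_l e_l^2 = 2\sum_{l\in S} k_l + \mathrm{dist}^2,
\]
and the standard majorization of $\mathbf{a}$ over the most-equal $\mathbf{e}$ yields $\sum_{l\in S} k_l \geq 0$. Next, using the formula $D_n = (C_d+2)n - C_d\,\gamma(n)$ from Lemma~\ref{l:dn} and the subadditivity $\gamma(n+1) \leq \gamma(n) + \gamma(1) = \gamma(n) + 1$ from Lemma~\ref{l:gn}, one obtains the discrete convexity bound $D_{n+1} - D_n \geq (C_d+2) - C_d = 2$. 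Telescoping then shows that for every integer $k$, whether positive or negative,
\[
L_{e+k} - L_e \geq k D_e + k^2 - k,
\]
the crucial feature being that the linear correction $-k$ has the same coefficient in both signs of $k$. Summing over $l$ against $\sum_l k_l = 0$ kills the linear correction, while $D_{e_l} \in \{D_q, D_{q+1}\}$ gives $\sum_l k_l D_{e_l} = (D_{q+1}-D_q)\sum_{l\in S}k_l \geq 2\sum_{l\in S}k_l$, so altogether
\[
\textstyle\sum_l(L_{a_l} - L_{e_l}) \geq 2\sum_{l\in S} k_l + \mathrm{dist}^2.
\]

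Substituting the two estimates into $M(\mathbf{a}) - M(\mathbf{e}) = -(\sum_l a_l^2 - \sum_l e_l^2) + 2^{d-2}\sum_l(L_{a_l} - L_{e_l})$ yields
\[
M(\mathbf{a}) - M(\mathbf{e}) \geq (2^{d-2} - 1)\bigl(2\textstyle\sum_{l\in S} k_l + \mathrm{dist}^2\bigr) \geq (2^{d-2} - 1)\,\mathrm{dist}^2(\mathbf{a},\mathbf{e}).
\]
Multiplying by $2^{(d-2)(m-1)}$ and summing over $i$ produces the lemma with $c' = 1 - 2^{-(d-2)}$, which is at least $1/2$ for every $d \geq 3$. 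The delicate point I anticipate is the telescoping convexity bound for $L_{e+k}-L_e$: one must arrange that the linear correction has precisely the right sign structure on both sides of $e$ so that, when summed against $\sum k_l = 0$, it contributes nothing and only the clean quadratic $\mathrm{dist}^2$ survives. Everything else is bookkeeping built on top of Lemma~\ref{l:gn} and Lemma~\ref{l:dn}.
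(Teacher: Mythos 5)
Your proof is correct, but it is organized quite differently from the paper's. The paper proves the lemma dynamically: after the same localization to a single level-$(m-1)$ cube, it interpolates from $\mathbf{e}_i$ to $\mathbf{a}_i$ by moving one particle at a time between a larger and a smaller coordinate, bounds each elementary move below by $2^{(d-2)m}(b_1-b_2+1)$ via the inequality $D_{b_1}-D_{b_2-1}\geq 2(b_1-b_2+1)$, and then assembles the general case from the sign structure of the $k_l$'s (all nonnegative increments precede all nonpositive ones after sorting), finishing with a Cauchy--Schwarz step that costs a factor $\asymp 2^{-d}$ in the constant. You instead write the energy difference in closed form as $M(\ba)-M(\mathbf{e})$ with $M(\ba)=(N^2-\sum_l a_l^2)+2^{d-2}\sum_l L_{a_l}$, expand the quadratic cross-term exactly, and control the ground-state term by the two-sided discrete convexity bound $L_{e+k}-L_e\geq kD_e+k^2-k$, whose linear part cancels against $\sum_l k_l=0$. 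Both arguments ultimately rest on the same number-theoretic input ($\gamma(n+1)\leq\gamma(n)+1$ from Lemma \ref{l:gn}, equivalently $D_{n+1}-D_n\geq 2$), but your bookkeeping is cleaner and yields the dimension-uniform constant $c'=1-2^{-(d-2)}\geq \tfrac12$, whereas the paper's constant degrades like $2^{-d}$ (so your version actually delivers the uniformity over $d\geq 3$ that the statement literally asserts). The one step you assert without proof is $\sum_{l\in S}k_l\geq 0$; this is genuinely the standard fact that the balanced partition is minimal in the dominance order, and it admits a two-line verification (if $\sum_{l\leq r}a_l<r(q+1)$ then the remaining $2^d-r$ sorted entries average strictly more than $q$, forcing $a_{r+1}\geq q+1$ and hence $a_l\geq q+1$ for all $l\leq r$, a contradiction), so I would include that line. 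I would also note explicitly that the convexity bound is applied with $e=e_l$ and $k=k_l$ where $e_l+k_l=a_l\geq 0$, so no boundary issue with $D_0=0$ arises.
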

Observe that, it is important to have the $\mathrm{dist}(\cdot,\cdot)$ function defined in terms of the permutations of the vectors $\ba_i$ and $\mathbf{e}_i$ as in \eqref{e:dist}. For instance, if $\ba_i$ and $\mathbf{e}_i$ are permutations of each other for each $i$ and $\cP_1,\cP_2$ both have ground state configurations beyond level $m$, then they are two different ground state configurations beyond level $m-1$, and as a consequence $H(\cP_1)=H(\cP_2)$. Thus in this case $\mathrm{dist}(\ba_i,\mathbf{e}_i)$ ought to be zero for each $i.$
\begin{proof}Since we are proving a lower bound, it suffices to assume that $\cP_1$ has a ground state configuration beyond level $m.$ Furthermore, it is enough to restrict ourselves to the descendants of an individual sub-cube in $\cD_{m-1}$, since the contributions to $H(\cP_1)$ or $H(\cP_2)$ by the interactions between particles across sub-cubes  in $\cD_{m-1},$ are the same.  We start by considering the descendants of the first sub-cube only. Let us call the $2^d$ descendent sub-cubes of the first sub-cube of $(m-1)$-th level as $v_1,v_2,\ldots,v_{2^d}$. Also, if $(x^{(i)}_1,x^{(i)}_2,\ldots,x^{(i)}_n)$ denotes the points corresponding to the $i$-th partition for $i=1,2$, we define
\[H(\cP_i,v_j)=\sum_{s\neq u; x^{(i)}_s,x^{(i)}_u\in v_j} w(x^{(i)}_s,x^{(i)}_u)\,\ \ \ \text{ for } i=1,2;\ j=1,2,\ldots,2^d\,,\]
i.e., the energy coming from the interaction inside the sub-cube $v_j.$
Furthermore, if $v(x)$ denotes the vertex in $\cD_{m-1},$ that contains the point $x$, let
\[I(\cP_i)=\sum_{v(x^{(i)}_s)\neq v(x^{(i)}_u)} w(x^{(i)}_s,x^{(i)}_u)\,\ \ \ \text{ for } i=1,2\,,\] i.e., the energy coming from the interaction across sub-cubes $\{v_j\, : j=1,2, \ldots, 2^d\}.$
Before giving a full proof of Lemma \ref{l:energy},
as a warm up, we first compare the energies of some pairs of partitions that differ only in a small number of sub-cubes.\\

\noindent
$\bullet$  \textbf{Partitions differing only in two coordinates}:
 To begin with, consider two partitions $\mathbf{f}_1:=(b_1+1,b_2-1, b_3,\ldots,b_{2^d})$ and $\mathbf{f}_2:=(b_1,b_2,\ldots,b_{2^d})$ with $b_1\geq b_2$, where the vector entries denote the number of points in the vertices $(v_1,v_2,\ldots,v_{2^d})$. Recall that by hypothesis both the partitions have ground state configurations beyond level $m$. Then,
 \[H(\mathbf{f}_1)-H(\mathbf{f}_2)=\sum_{i=1,2}H(\mathbf{f}_1,v_i)-\sum_{i=1,2}H(\mathbf{f}_2,v_i)+I(\mathbf{f}_1)-I(\mathbf{f}_2)\,.\]
 By definition (see Section \ref{mdmr}), we have, 
 \[I(\mathbf{f}_1)-I(\mathbf{f}_2)=2^{(d-2)(m-1)}\Big((b_1+1)(b_2-1)-b_1b_2\Big)=-2^{(d-2)(m-1)}(b_1-b_2+1)\,.\]
 And since the partitions have ground state configurations beyond level $m$, recalling the notation $D_n=L_{n+1}-L_n$, we get,
 \[\sum_{i=1,2}\left(H(\mathbf{f}_1,v_i)-H(\mathbf{f}_2,v_i)\right)= 2^{(d-2)m}\sum_i(L_{b_1+1}+L_{b_2-1}-L_{b_1}-L_{b_2})=2^{(d-2)m}(D_{b_1}-D_{b_2-1})\,.\]
 Also, recall from \eqref{e:defg}, that $\gamma(m):=\sum_{i=0}^kc_i2^{(d-2)i}$ where $m=\sum_{i=0}^kc_i2^{di}$. Clearly
$\gamma(m)\leq m\,.$
 Then by Lemma \ref{l:dn},
 \begin{align*}
 D_{b_1}-D_{b_2-1}&=(C_d+2)(b_1-b_2+1)-C_d(\gamma(b_1)-\gamma(b_2-1))\\
 &=C_d[b_1-b_2+1-(\gamma(b_1)-\gamma(b_2-1))]+2(b_1-b_2+1)\,.
 \end{align*}
 Now, by Lemma \ref{l:gn}, we have
$\gamma(b_1)-\gamma(b_2-1)\leq \gamma(b_1-b_2+1)\leq b_1-b_2+1\,.$
 Thus
\begin{align*}C_d[b_1-b_2+1-&(\gamma(b_1)-\gamma(b_2-1))]\geq 0\,\, \text{ and hence,}\\
D_{b_1}-D_{b_2-1}&\geq 2(b_1-b_2+1)\,.
\end{align*}
 Putting all this together, we have,
 \[H(\mathbf{f}_1)-H(\mathbf{f}_2)\geq 2^{(d-2)m+1}(b_1-b_2+1)-2^{(d-2)(m-1)}(b_1-b_2+1)\geq 2^{(d-2)m}(b_1-b_2+1)\,.\]
 
 Next, we consider two partitions $\mathbf{f}_1:=(b_1+k,b_2-k, b_3,\ldots,b_{2^d})$ and $\mathbf{f}_2:=(b_1,b_2,\ldots,b_{2^d})$ with $b_1\geq b_2$ and $k\in \N$. We can move from $\mathbf{f}_2$ to $\mathbf{f}_1$ in $k$ steps as follows;
 \[(b_1,b_2,\ldots,b_{2^d})\rightarrow (b_1+1,b_2-1,\ldots,b_{2^d})\rightarrow\ldots(b_1+k-1,b_2-(k-1),\ldots,b_{2^d})\rightarrow (b_1+k,b_2-k,\ldots,b_{2^d})\,,\]
 where we move one point each time, such that, at all every step, the first two vector entries are arranged in a non-increasing order, and the corresponding partition has a ground state configuration beyond level $m$. Thus, we can apply the previous argument $k$ times to get
\[H(\mathbf{f}_1)-H(\mathbf{f}_2)\geq 2^{(d-2)m}\sum_{i=0}^{k-1}(b_1-b_2+1+2i)\geq 2^{(d-2)m}k^2\,,\]
since $b_1\geq b_2$.\\

\noindent
$\bullet$  \textbf{Partitions with only the first coordinate increasing}:
Next, we consider the situation where $\mathbf{f}_1:=(b_1+k_1,b_2-k_2, b_3-k_3,\ldots,b_s-k_s,b_{s+1},\ldots,b_{2^d})$ and $\mathbf{f}_2:=(b_1,b_2,\ldots,b_{2^d})$ with $b_1\geq b_i$ for $i=2,3,\ldots,s$ and $k_1,\ldots,k_s \in \N, 2\leq s\leq 2^d$ such that $k_1=k_2+\ldots+k_s$, and the partitions have ground state configurations beyond level $m$. Then we move from $\mathbf{f}_2$ to $\mathbf{f}_1$ as 
\begin{align*}
(b_1,b_2,\ldots,b_{2^d})&\rightarrow (b_1+k_2,b_2-k_2,\ldots,b_{2^d})\rightarrow (b_1+k_2+k_3,b_2-k_2,b_3-k_3\ldots,b_{2^d})\rightarrow\\ 
&\ldots\rightarrow(b_1+k_1,b_2-k_2,\ldots,b_s-k_s,b_{s+1},\ldots,b_{2^d})\,.
\end{align*}
Applying the above arguments $s-1$ times, we have (we use the convention that $\sum_{i=a}^bc_i=0$ for $b<a$)
\begin{align}\label{e:good}
H(\mathbf{f}_1)-H(\mathbf{f}_2)&\geq 2^{(d-2)m}\sum_{j=2}^s\sum_{i=0}^{k_j-1}\left(b_1+\sum_{r=2}^{j-1}k_r-b_{j}+1+2i\right)\\
&\geq 2^{(d-2)m}\sum_{j=2}^s\sum_{i=0}^{k_j-1}\left(\sum_{r=2}^{j-1}k_r+1+i\right)\nonumber\\
&= 2^{(d-2)m}\sum_{i=1}^{k_2+\ldots+k_s} i\geq 2^{(d-2)m}\frac{k_1^2}{2}\nonumber\,.
\end{align}
Here the second inequality uses $b_1\geq b_i$ for $i=2,3,\ldots,s$.\\

\noindent
$\bullet$  \textbf{General case}: Combining the above observations, we now finish proof of the general case for two partitions $\cP_1$ and $\cP_2$. Recall that we have assumed that  $\cP_1$ has a ground state configuration beyond level $m$, as otherwise, $H(\cP_1)$ only increases. And as argued before, we just consider the partition restricted to the descendants of the first sub-cube in $\cD_m$. Let $a_1\geq a_2\geq \ldots\geq a_{2^d}$ and $e_1\geq \ldots \geq e_{2^d}$ be the numbers in $\mathbf{a}_1=(a_{1,1},\ldots,a_{1,2^d})$ and $\mathbf{e}_1=(e_{1,1},\ldots,e_{1,2^d})$, arranged in decreasing order. Let
 \[a_i=e_i+k_i, \ \ \ \text{ where } k_i\in \Z, \ i=1,2,\ldots,2^d\,.\]
 {Then, as mentioned already, a simple rearrangement inequality implies, from the definition in \eqref{e:dist}
 \[\mathrm{dist}^2(\mathbf{a}_1,\mathbf{e}_1)=\sum_{i=1}^{2^d} k_i^2\,.\]}
Now recall that $\mathbf{e}$ partitions the particles according to a ground state given the configuration at level $m-1.$ Thus by Theorem \ref{gsc},  $$e_i-e_j\in \{0,1\}$$ for any $1\leq i\leq j\leq 2^d$. Now this along with the facts that 
\begin{align*}
a_1\geq a_2 \ldots\geq a_{2^d};&\,\,\,
 e_1\geq e_2 \ldots \geq e_{2^d}; \text{ and }\\
 \sum_i a_i&=\sum_i e_i
\end{align*}

 imply that all the positive $k_i's$  appear before the negative $k_i's$.  {Let $k_i\geq 0$ for $i=1,2,\ldots,s$ and $k_i\le 0$ for $i>s$ for some $s\in \{1,2,\ldots,2^d\}$}. Then, using the arguments in \eqref{e:good} repeatedly, we have
\[H(\mathbf{a}_1)-H(\mathbf{e}_1)\geq c2^{(d-2)m}\sum_{i=1}^{2^{d}} k_i^2\mathbf{1}(k_i>0)=c2^{(d-2)m}\sum_{i=1}^{s} k_i^2\geq c'2^{(d-2)m}\sum_{i=1}^{2^d} k_i^2\,,\]
where $c'=\frac{c2^{-d}}{1+2^{-d}}$. For the last inequality, we have used the fact that
\[k_1^2+\ldots+k_s^2\geq s^{-1}(k_1+\ldots+k_s)^2\geq 2^{-d}(k_{s+1}+\ldots+k_{2^d})^2\geq 2^{-d}(k_{s+1}^2+\ldots+k_{2^d}^2)\,,\]
since $k_1+\ldots+k_s=k_{s+1}+\ldots+k_{2^d}$. 
This proves the lemma.
\end{proof}
The next result Proposition \ref{l:entropy1} is the key result of this section, making precise \eqref{key20}. 
 However we need to develop some more notations first. 
\subsubsection{More Notations:}
Let $G=G_n$ denote the set of all ground state configurations. 
It would also be convenient to define $G^{(r)}:=G_n^{(r)}$ to be the set of all partitions that are ground states beyond level $r$ i.e., the partitions restricted to any dyadic box of size $2^{-r}$ is a ground state partition. 
Thus, given any vector $\ba$ of size $2^{dm}$ that assigns the number of particles in the various sub-cubes  in $\cD_m$ (from left to right in the natural planar embedding of $\cD$), we will be identifying it with the subset of partitions in $G^{(m)}$ whose partition at level $m$ is equal to $\ba.$ We will call this set as $G(a):=G_n(a)=G_n^{(m)}(a).$  
Let 
\begin{equation}\label{not32}
\bs(\ba)=(\bs_1,\bs_2,\ldots,\bs_{2^{dm}})
\end{equation}
 denote a ground state partition at level $m+1$, where $\bs_i$ is a vector of length $2^d,$ denoting the partition restricted to the descendants of the $i$-th sub-cube in $\cD_m$ (arranged from left to right). Furthermore, to avoid ambiguity  we assume that for each $i$, the entries of the vector $\bs_i$ are arranged in decreasing order.
Now, given a vector $\ba$ as above, we would need to consider the set of all configurations 
\begin{equation}\label{ground13}
G^{(m+1)}(\ba):=G_{n}^{(m+1)}(\ba)\subset G_{n}^{(m+1)},
\end{equation}
 which induce partition $\ba$ on $\cD_m$, i.e., all partitions which agree with $\ba$ on $\cD_m$ and are ground states restricted to elements of $\cD_{m+1}$ but might not be elements of $G(\ba).$
 \begin{figure}[t]
\centering
\includegraphics[width=.5\textwidth]{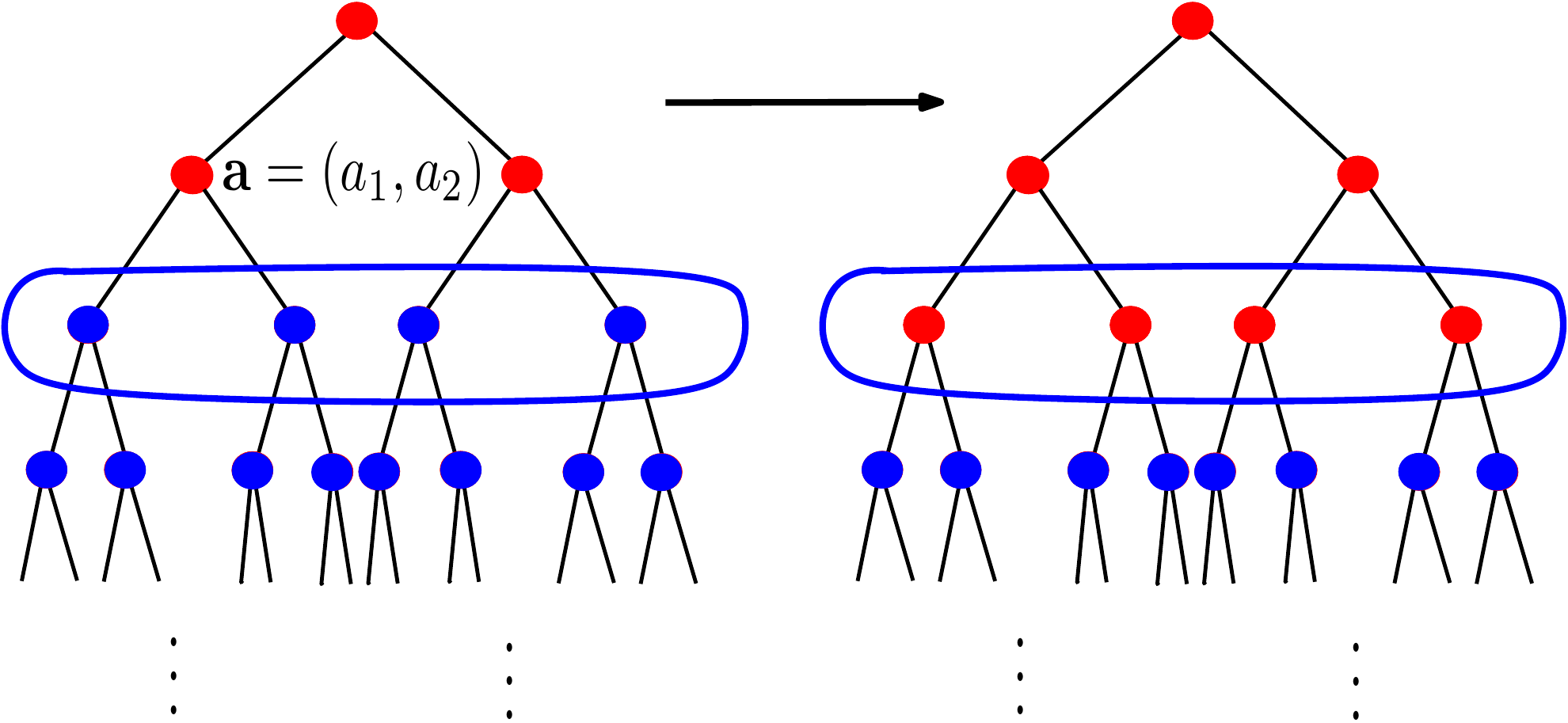}
\caption{A toy example of a situation where Lemma \ref{l:energy} lower bounds the energy difference between the two configurations shown in the figure where $(a_1,a_2)$ denotes the partition at the second level, but the partitions differ at the third level and as in Figure \ref{fig1}, blue vertices denote ground states. }
\label{fig3}
\end{figure}

To do this the following notation would be quite convenient. 
Consider a collection of vectors $$\bg_1:=(\bk_1,\bk_2,\ldots,\bk_{2^{dm}}).$$ Here each $\bk_i$ will be a vector of length $2^d$ with entries in $\Z$ which add to $0$. and is the vector that gives the differences in the numbers of particles in various descendants of the $i$th sub-cube in $\cD_m$ from the vector $\bs_i.$  
Formally by \eqref{e:defsum} we identify $G^{(m+1)}(\ba)$ with the union of the following vectors of size $2^{d(m+1)}$:  
\begin{equation}\label{ident13}
\bigcup_{\bg_1}\left([\bs_1]+\bk_1\right)\times \left([\bs_2]+\bk_2\right)\ldots \times \left([\bs_{2^{dm}}]+\bk_{2^{dm}}\right), \end{equation} where the union is over all vectors $\bg_1$ for which the above objects make sense (i..e $\bg_1$ cannot have a huge negative entry etc.) Thus the above gives us a way to identify subsets of $G^{(m+1)}(a)$ with the vectors $\bg_1.$
Note that according to this notation, $G(\ba)=G^{(m)}(\ba)$ can be identified with $\bg_1=\mathbf{0}.$
However for our purposes, we will need to consider slightly more complicated version of the above.  Similar to \eqref{ground13}, we define for any $r>0,$
\begin{equation}\label{ground14}
G^{(m+r)}(\ba):=G_{n}^{(m+r)}(\ba)\subset G_{n}^{(m+r)}
\end{equation} as the set of all partitions on $\cD_{m+r}$ which induce partition $\ba$ on $\cD_m.$ Thus similar to \eqref{ident13} we will identify the same with $r$ tuples of vectors $(\bg_1,\bg_2,\ldots, \bg_r).$ To this end we define for any $m,r\in \N$,  the set
\[\Gamma^{(r)}_m=\left\{(\bg_1,\bg_2,\ldots,\bg_r)\right\}\,,\]
where the vectors $\bg_r$ are of the following form.
\[\bg_r=(\bk^{(r)}_1,\bk^{(r)}_2,\ldots,\bk^{(r)}_{2^{d(m+r-1)}})\,,\]
such that for each $i=1,2,\ldots,2^{d(m+r-1)}$, $\bk^{(r)}_i=(k^{(r)}_{i,1},k^{(r)}_{i,2}\ldots,k^{(r)}_{i,2^d})$ with $k^{(r)}_{i,j}\in \Z$ for $j=1,2,\ldots,2^d$ and
$\sum_{j=1}^{2^d}k^{(r)}_{i,j}=0\,.$  We will also consider the natural embedding of $\Gamma_m^{(r)} \subset \Gamma_m^{(r+1)}$ where we choose $\bg_{r+1}$ to be $(\mathbf{0}, \mathbf{0},\ldots, \mathbf{0}).$
Finally let, 
 \[\Gamma_m=\bigcup_{j\ge 0}\Gamma^{(j)}_m.\]

Now by the above correspondence, for any such partition $\mathbf{a}$ of length $2^{dm}$ and any vector $\nu=(\bg_1,\ldots,\bg_r)\in \Gamma^{(r)}_m$, we denote by $\nu(\ba)$, the subset of partitions in $G^{(m+r)}(\ba)$ corresponding to $\nu,$ i.e., where the difference vectors at subsequent levels up to permutations are given by the vectors $\bg_1,\bg_2,\ldots, \bg_r.$ The formal definition is by induction:
\begin{itemize}
\item  If $r=0$, $\nu(\ba)=\ba$ (so for $r=0$ we get $G^{(m)}(a)$). 
\item 
Now, letting $\nu'=(\bg_1,\ldots,\bg_{r-1})\in \Gamma^{(r-1)}_{m}$, and by induction having defined $\nu'(\ba)\subset G^{(m+r-1)}(a)$ we now proceed to defining $\nu(a).$ For any $\sigma\in \nu'(\ba)$, let $\bs(\sigma)=(\bs_1',\bs_2',\ldots,\bs'_{2^{d(m+r-1)}})$ as defined in \eqref{not32}.  For each $i=1,2,\ldots, 2^{d(m+r-1)}$, consider the equivalence class
\begin{equation}\label{e:dessi}
[\bs_i]=[\bs_i']+\bk^{(r)}_i\,,
\end{equation}
where per our convention $\bs_i$ is the unique vector in $[\bs_i']+\bk_i$ which is sorted in decreasing order,
 $\bg_r=(\bk^{(r)}_1,\ldots,\bk^{(r)}_{2^{d(m+r-1)}})$ and the operation on equivalence classes was defined in \eqref{e:defsum} (provided the definition makes sense, that is $(\bs_1,\bs_2,\ldots,\bs_{2^{d(m+r-1)}})$ is a valid partition of $n$ points). Given the above let 
\begin{equation}\label{e:Asig}
A_\sigma=[\bs_1]\times[\bs_2]\times\ldots \times [\bs_{2^{d(m+r-1)}}]
\end{equation}
We finally define
\[\nu(\ba)=\bigcup_{\sigma\in \nu'(\ba)}A_\sigma\,\]
\end{itemize}
to be the union of the sets of configurations corresponding to different elements of $\nu'(\ba).$ 
Thus
\begin{equation}\label{impdef12}
G^{(m+r)}(\ba)=\bigcup_{\nu \in  \Gamma^{(r)}_{m}}\nu(\ba)
\end{equation} and hence we will identify $G^{(m+r)}(\ba)$ and  $\Gamma^{(r)}_{m}.$
Note that $G^{(m+r)}\subset G^{(m+r+1)}$ since  $\Gamma^{(r)}_{m}\subset \Gamma^{(r+1)}_{m}.$
We now state the following key result which roughly says that the set of all configurations, that are not ground state configurations beyond level $\Theta(\log\log n)$, does not contribute much to the partition function. Recall that $\Omega$ denotes the set of all partitions. 
\begin{ppn}\label{l:entropy1}Let $m_0=m_0(\beta)$ be the smallest integer such that \[c'\beta2^{(d-2)m_0}\geq \max\{4c\log n,8\log(2n+1)\}\,,\] where $c$ is as in Lemma \ref{l:bn} and $c'$ is as in Lemma \ref{l:energy}. Then 
for all large enough $n,$
\[Z(\Omega\setminus G^{(m_0)})\leq C_0n^{-c_0}Z_n\,,\]
for some constants $c_0,C_0>0$ and $Z_n$ was defined in \eqref{partition}.
\end{ppn}

Recalling the notations $b_n$ and $h_n$ from Lemma \ref{l:bn} we first record a simple but useful observation.
\begin{lem}\label{ldefZP} Given $m,$ and a vector $\ba=(n_1,n_2,\ldots,n_{2^{dm}})$, with $\sum_{i=1}^{2^{dm}} n_i=n$,
\begin{equation}\label{e:defZP}
Z(G(\ba)):=\int_{\bx\in G(\ba)}e^{-\beta H(\bx)}d\bx=e^{-\beta H(G(\ba))}\int_{\bx\in \cP}d\bx=e^{-\beta H(G(\ba))}n!\prod_{i=1}^{2^{dm}}b_{n_i}2^{-n_id(m+h_{n_i})}\,,
\end{equation}
where $H(G(\ba))$ is the common value of the Hamiltonian for all configurations in $G(\ba).$
\end{lem}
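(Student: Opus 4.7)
The plan is to observe that $H$ is constant on $G(\ba)$, so the integral reduces to a volume computation, and then to carefully count the combinatorial choices that describe $G(\ba)$ inside $[0,1)^{dn}$.

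First I would note that every configuration $\bx \in G(\ba)$ is, by the very definition of $G(\ba) = G^{(m)}(\ba)$, a ground state at levels beyond $m$ that induces the fixed partition $\ba = (n_1,\ldots,n_{2^{dm}})$ at level $m$. Since the hierarchical potential $w(x,y)$ depends only on the deepest dyadic cube containing both points, $H(\bx)$ is a function of the partition only. Theorem \ref{gsc} and Theorem \ref{gse} tell us that for each sub-cube $v_i \in \cD_m$ containing $n_i$ points, a ground state restricted to $v_i$ has a prescribed level-by-level structure, so the Hamiltonian equals a common value which we denote $H(G(\ba))$. Consequently
\[
Z(G(\ba)) = e^{-\beta H(G(\ba))} \cdot \mathrm{Vol}(G(\ba)),
\]
and all that remains is to compute the Lebesgue volume of $G(\ba) \subset [0,1)^{dn}$.

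Next I would decompose the volume by breaking the data of a configuration in $G(\ba)$ into three independent choices. First, assign the $n$ labelled points to the $2^{dm}$ sub-cubes of $\cD_m$ consistent with $\ba$; there are $\binom{n}{n_1, n_2, \ldots, n_{2^{dm}}} = n!/\prod_i n_i!$ such assignments. Second, inside each cube $v_i \in \cD_m$, the restricted configuration must be a ground state on $n_i$ points: by definition of $b_{n_i}$ (and Theorem \ref{gsc}, noting that $h_{n_i}$ is the base level at which a ground state places at most one point per sub-cube), there are $b_{n_i}$ ways to choose which $n_i$ sub-cubes of $v_i$ at relative depth $h_{n_i}$ (that is, absolute level $m+h_{n_i}$) are occupied, and then $n_i!$ ways to assign the labelled points that were sent to $v_i$ to these occupied sub-cubes. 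Third, each point is free to move anywhere in its assigned base sub-cube, which has Lebesgue volume $2^{-d(m+h_{n_i})}$; integrating positions contributes $\prod_i 2^{-n_i d(m+h_{n_i})}$.

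Multiplying these contributions and cancelling the $\prod_i n_i!$ factor against the multinomial numerator yields
\[
\mathrm{Vol}(G(\ba)) = n! \prod_{i=1}^{2^{dm}} b_{n_i}\, 2^{-n_i d(m+h_{n_i})},
\]
which combined with the factored integral above gives the stated formula. There is no real obstacle in this argument beyond bookkeeping; the only point to verify with care is that the bijection between labelled ground-state configurations and the combinatorial data (cube-assignment, occupancy pattern, label permutation, point position) is indeed a bijection, which is automatic since distinct choices of occupied base cubes or distinct point assignments produce disjoint regions in $[0,1)^{dn}$.
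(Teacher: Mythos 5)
Your proposal is correct and follows essentially the same route as the paper: observe that $H$ depends only on the induced partition (hence is constant on $G(\ba)$), then compute the Lebesgue volume as (label arrangements) $\times$ (choices of occupied base sub-cubes, counted by $\prod_i b_{n_i}$) $\times$ (product of base sub-cube volumes $2^{-n_i d(m+h_{n_i})}$). The only cosmetic difference is that you factor the $n!$ as a multinomial coefficient times $\prod_i n_i!$ before recombining, whereas the paper counts the $n!$ arrangements directly after fixing the occupied cubes; these are the same count.
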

\begin{proof}
The first equality follows by the fact which has been observed before that   the value of the Hamiltonian only depends on the partition and not the precise location of the points.  
To see the last equality, note that there are $n!$ ways to arrange the particles once we have chosen the $n$ sub-cubes where they are placed. 
By definition of $b_m$, for any $m,$ the number of ground state configurations (that determine the sub-cubes at the base level that are occupied) is $\prod_{i=1}^{2^{dm}}b_{n_i}.$ Furthermore, by the definition of $h_m,$ the description of a ground state partition below the $i$th vertex in $\cD_m,$  will go down to  $\cD_{m+h_{n_i}}.$  Each such cube  has volume $2^{-d(m+h_{n_i})}$ and $n_i$ such cubes will be occupied as each has at most one particle (see Figure \ref{fig4}).
\end{proof}
\begin{figure}[t]
\centering
\includegraphics[width=.5\textwidth]{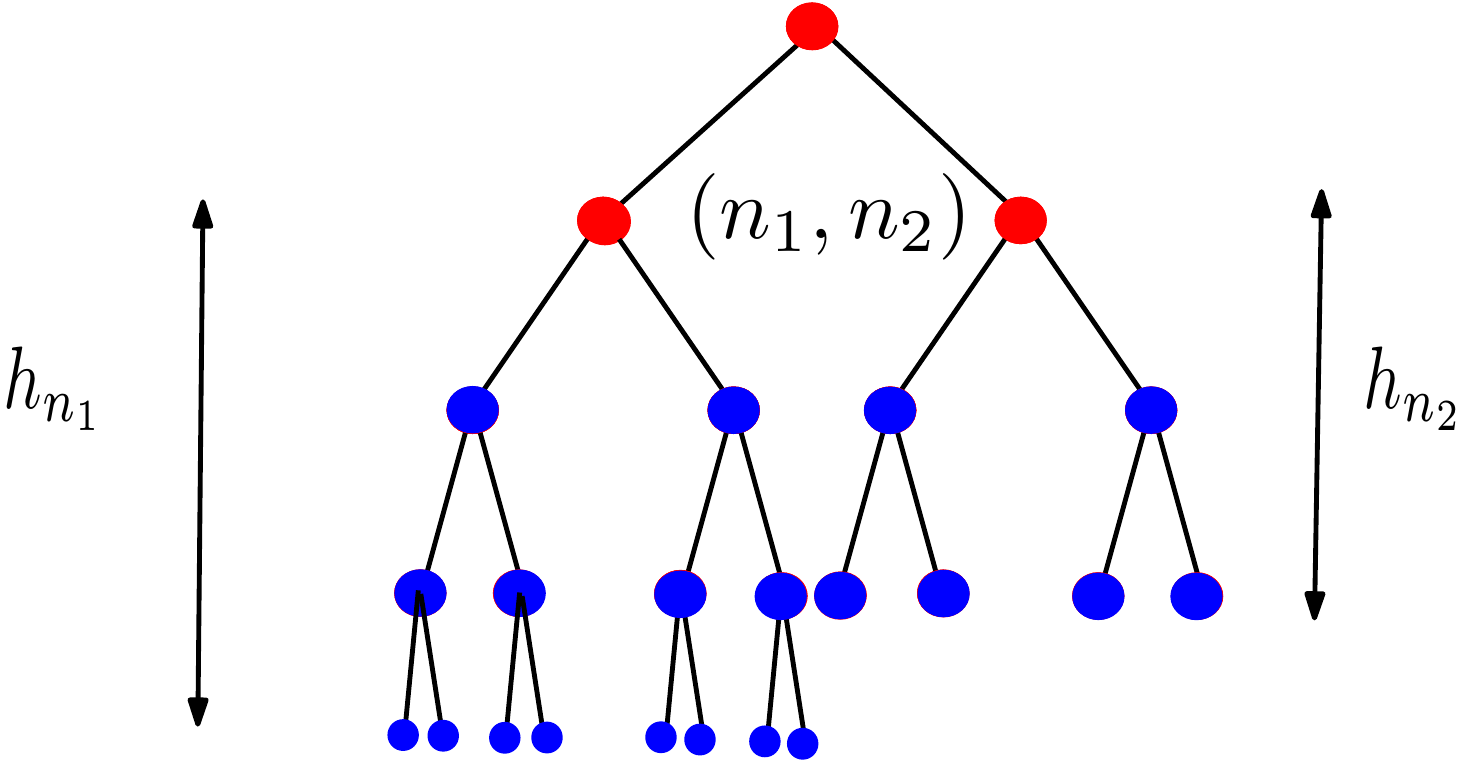}
\caption{Illustration describing the setting of Lemma \ref{ldefZP}.}
\label{fig4}
\end{figure}

The following lemma is now immediate. 
\begin{lem}\label{groundcontribution} Recalling that $G_n$ denotes the set of all ground state configurations with $n$ points, 
\[Z(G_n)=e^{-\beta L_n}n!b_n2^{-ndh_n}\,.\]
\end{lem}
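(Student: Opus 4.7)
The plan is to derive this identity as the trivial $m=0$ case of Lemma \ref{ldefZP}. At level $0$ there is exactly one dyadic sub-cube, namely $[0,1)^d$ itself, so the only admissible partition at level $0$ is the length-one vector $\mathbf{a}=(n)$ assigning all $n$ points to the root. Under the identification from Section \ref{s2}, every configuration of $n$ points is consistent with $\mathbf{a}$ at level $0$, and hence $G(\mathbf{a})$ coincides with $G_n$.

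Next, I would invoke the defining property of a ground state: for every $\bx\in G_n$, $H(\bx)=L_n$ by definition of $L_n$, so the common value $H(G(\mathbf{a}))$ appearing in Lemma \ref{ldefZP} is exactly $L_n$. Setting $m=0$ and $n_1=n$ in the formula \eqref{e:defZP} then yields
\[
Z(G_n)\;=\;e^{-\beta L_n}\, n!\, b_n\, 2^{-n\cdot d\cdot(0+h_n)}\;=\;e^{-\beta L_n}\, n!\, b_n\, 2^{-ndh_n},
\]
which is the claimed identity.

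There is no real obstacle here; all the substantive work is already inside Lemma \ref{ldefZP}, which encodes both the structural characterization of ground states from Theorem \ref{gsc} (so that the number of base-level occupation patterns is precisely $b_n$, each occupying $n$ cubes of volume $2^{-dh_n}$) and the $n!$ entropy factor from labeling the exchangeable particles. The present lemma is just the clean packaging of that formula at the coarsest level of the dyadic hierarchy.
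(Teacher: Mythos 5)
Your proof is correct and follows exactly the paper's route: the paper also presents Lemma \ref{groundcontribution} as an immediate consequence of Lemma \ref{ldefZP}, specialized to the single root cube with $H(G_n)=L_n$. Nothing is missing.
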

Given the above preparation we now embark on proving the above proposition. The argument will involve some computations but the key idea will be an energy-entropy comparison outlined in Section \ref{iop}. 
\begin{proof}[Proof of Proposition \ref{l:entropy1}]
Let $\nu=(\bg_1,\ldots,\bg_r) \in \Gamma^{(r)}_{m_0}$, where  $\bg_r=(\bk_1,\ldots,\bk_{2^{d(m_0+r-1)}})$ with $\bk_i=(k_{i,1},\ldots,k_{i,2^d})$ (we are suppressing the $r$ dependence on the $\bk_i's$ for brevity). Let $\nu'=(\bg_1,\ldots,\bg_{r-1})$. 
Also fix a partition $\ba$ of length $2^{dm_0}$ and any $\sigma\in \nu'(\ba)$. We shall compare $Z(A_\sigma)$ with that of $Z(\sigma)$ (recall the definition of $A_\sigma$ from \eqref{e:Asig}).  Recalling \eqref{not32}, let $\bs(\sigma)=(\bs_1',\ldots,\bs_{2^{d(m_0+r-1)}}')$ with $\bs_i'=(s'_{i,1},\ldots,s'_{i,2^d})$. By \eqref{e:Asig}, $\sigma$ and $A_\sigma$ are identified with subsets of $G^{((m_0+r))}(\ba)$ given by $$[\bs'_1]\times[\bs'_2]\times\ldots \times [\bs'_{2^{d(m+r-1)}}], \text{ and }[\bs_1]\times[\bs_2]\times\ldots \times [\bs_{2^{d(m+r-1)}}]$$ respectively where $[\bs_i]=[\bs'_i]+k_i$. 
Note that, for all configurations $\tau \in A_\sigma$, $H(\tau)$ is a constant, and thus we shall use $H(A_\sigma)$ to denote the common value.  By Lemma \ref{l:energy} and the definition of $\mathrm{dist}(\cdot,\cdot)$, we have
\begin{equation}\label{e:compenergy}
H(A_\sigma)\geq H(\sigma)+c'2^{(d-2)(m_0+r)}\sum_{i=1}^{2^{d(m_0+r-1)}}\sum_{j=1}^{2^{d}}k_{i,j}^2\,.
\end{equation}

Next, define 
\[\mathscr{S}(\sigma)=\prod_{i=1}^{2^{d(m_0+r-1)}}\big|[\bs_i']\big|\,, \ \ \ \ \mathscr{S}(A_\sigma)=\prod_{i=1}^{2^{d(m_0+r-1)}}\big|[\bs_i]\big|\,,\]
where $|\cdot|$ denotes the cardinality of a set. Then, using \eqref{e:defZP}, we get
\[Z(\sigma)=e^{-\beta H(\sigma)}n!\mathscr{S}(\sigma)\prod_{i,j}b_{s'_{i,j}}2^{-s'_{i,j}d\left(m_0+r+h_{s'_{i,j}}\right)}\,,\]
\[Z(A_\sigma)=e^{-\beta H(A_\sigma)}n!\mathscr{S}(A_\sigma)\prod_{i,j}b_{s_{i,j}}2^{-s_{i,j}d\left(m_0+r+h_{s_{i,j}}\right)}\,.\]
Note that the proposition asks us to bound the ratio $\frac{Z(\Omega\setminus G^{(m_0)})}{Z_n}.$ We will start by bounding $\frac{Z(A_{\sigma})}{Z(\sigma)}.$

If $\bk_i=\mathbf{0}$ (the vector $(\underset{2^{d} \text{ times}}{\underbrace{0,0,\ldots 0}})$) for some $i$, then $[\bs_i]=[\bs_i']$ and that factor cancels in the ratio $\mathscr{S}(A_\sigma)/\mathscr{S}(\sigma)$. For all other $i$, since $[\bs_i]/[\bs_i']\leq 2^d!\leq e^{2^d\log (2^d)}$, one has
\[\frac{\mathscr{S}(A_\sigma)}{\mathscr{S}(\sigma)}\leq e^{2^d\log(2^d)\sum_i \mathbf{1}(\bk_i\neq \mathbf{0})}\,.\]
Using this along with the expressions of $Z(\sigma)$ and $Z(A_\sigma)$, we get
\[\frac{Z(A_\sigma)}{Z(\sigma)}\leq\frac{e^{-\beta H(A_\sigma)}e^{2^d\log(2^d)\sum_i \mathbf{1}(\bk_i\neq \mathbf{0})}\prod_{i,j} b_{s_{i,j}}2^{-s_{i,j}d\left(m_0+r+h_{s_{i,j}}\right)}}{e^{-\beta H(\sigma)}\prod_{i,j} b_{s'_{i,j}}2^{-s'_{i,j}d\left(m_0+r+h_{s'_{i,j}}\right)}}\,.\]
From here, using \eqref{e:compenergy}, we have
\begin{align}\nonumber
\frac{Z(A_\sigma)}{Z(\sigma)}&\leq\frac{e^{-c'\beta 2^{(d-2)(m_0+r)}\sum_{i,j} k_{i,j}^2}e^{2^d\log(2^d)\sum_i \mathbf{1}(\bk_i\neq \mathbf{0})}\prod_{i,j} b_{s_{i,j}}2^{-s_{i,j}d\left(m_0+r+h_{s_{i,j}}\right)}}{\prod_{i,j} b_{s'_{i,j}}2^{-s'_{i,j}d\left(m_0+r+h_{s'_{i,j}}\right)}}.\end{align}
Now our choice of $m_0$ implies that for $r\geq 0$ (and for all large enough $n$),
\[\frac{c'\beta}{2}2^{(d-2)(m_0+r)}\sum_{i,j} k_{i,j}^2\geq 2^d\log(2^d)\sum_{i,j} k_{i,j}^2\geq 2^d\log(2^d)\sum_i \mathbf{1}(\bk_i\neq \mathbf{0})\,.\]
Using this the above simplifies to 
\begin{equation}\label{e:imp}
\frac{e^{-c_1\beta 2^{(d-2)(m_0+r)}\sum_{i,j} k_{i,j}^2}\prod_{i,j} b_{s_{i,j}}2^{-s_{i,j}d\left(m_0+r+h_{s_{i,j}}\right)}}{\prod_{i,j} b_{s'_{i,j}}2^{-s'_{i,j}d\left(m_0+r+h_{s'_{i,j}}\right)}},\,
\end{equation}
where $c_1=\frac{c'}{2}.$
Since $\sum_{i,j}s_{i,j}=\sum_{i,j}s'_{i,j}=n$,  \eqref{e:imp} is the same as 
\[\frac{e^{-c_1\beta 2^{(d-2)(m_0+r)}\sum_{i,j} k_{i,j}^2}\prod_{i,j} b_{s_{i,j}}2^{-s_{i,j}dh_{s_{i,j}}}}{\prod_{i,j} b_{s'_{i,j}}2^{-s'_{i,j}dh_{s'_{i,j}}}}\,.\]
Finally, since $[\bs_i]=[\bs'_i]+\bk_i$, we will use  the second statement  of Lemma \ref{l:bn} to compare the factors $b_{s'_{i,j}}2^{-s'_{i,j}dh_{s'_{i,j}}}$ and $b_{s_{i,j}}2^{-s_{i,j}dh_{s_{i,j}}}$ appearing in the products above (we will use the fact that $s'_{i,j},s_{i,j}\le n$). Plugging all of these we get,
\begin{align}\label{e:imp2'}
\frac{Z(A_\sigma)}{Z(\sigma)}&\leq\frac{e^{-c_1\beta 2^{(d-2)(m_0+r)}\sum_{i,j} k_{i,j}^2}\prod_{i,j} b_{s_{i,j}}2^{-s_{i,j}dh_{s_{i,j}}}}{\prod_{i,j} b_{s'_{i,j}}2^{-s'_{i,j}dh_{s'_{i,j}}}} 
\nonumber\\
&\overset{\eqref{iter123}}{\leq}\frac{e^{-c_1\beta 2^{(d-2)(m_0+r)}\sum_{i,j} k_{i,j}^2}\prod_{i,j} e^{c|k_{i,j}|\log n}b_{s'_{i,j}}2^{-s'_{i,j}dh_{s'_{i,j}}}}{\prod_{i,j} b_{s'_{i,j}}2^{-s'_{i,j}dh_{s'_{i,j}}}},  
\nonumber
\\
\nonumber
&= e^{-c_1\beta 2^{(d-2)(m_0+r)}\sum_{i,j} k_{i,j}^2+\sum_{i,j}c|k_{i,j}|\log n}\\
&\leq e^{-c''\beta 2^{(d-2)(m_0+r)}\sum k_{i,j}^2}\,,
\end{align}
where $c''=\frac{c_1}{2}$. Here, the last inequality follows because for $r\geq 0$,
\begin{equation}\label{ass1}
\frac{c_1\beta}{2}2^{(d-2)(m_0+r)}\sum_{i,j} k_{i,j}^2\geq c\log n \sum_{i,j} k_{i,j}^2 \geq c\log n\sum_{i,j} |k_{i,j}| \,,
\end{equation}
 (this is where we crucially use our choice of $m_0$).
Since $\nu(\ba)=\bigcup_{\sigma\in \nu'(\ba)}A_\sigma$ and the bound in \eqref{e:imp2'} is independent of our choice of $\sigma\in \nu'(\ba),$ we finally have from \eqref{e:imp2'},
\begin{equation}\label{ratio98}
\frac{Z(\nu(\ba))}{Z(\nu'(\ba))}\leq \sup_{\sigma\in \nu'(\ba)}\frac{Z(A_\sigma)}{Z(\sigma)}\leq e^{-c''\beta 2^{(d-2)(m_0+r)}\sum_{i,j} k_{i,j}^2}\,.
\end{equation}
Now recalling the identification between $G^{(m_0+r)}(\ba)$ and $\Gamma^{(r)}_m$ we see that 
 $$
 \frac{Z(G^{(m_0+r)}(\ba))}{Z(G^{(m_0+r-1)}(\ba))}=\frac{\displaystyle{\sum_{\nu'\in \Gamma^{(r-1)}_{m_0}}\sum_{\bg_r}}Z(\nu(\ba))}{\displaystyle{\sum_{\nu'\in \Gamma^{(r-1)}_{m_0}}}Z(\nu'(\ba))}
 $$
where $\nu'=(\bg_1,\ldots,\bg_{r-1})$ and $\nu=(\nu',\bg_r).$ Thus using \eqref{ratio98} we get that the RHS above is bounded by 
\begin{equation}\label{ratio99}
\sum_{\bg_r}\exp\left\{-c''\beta 2^{(d-2)(m_0+r)}\sum_{i=1}^{2^{d(m_0+r-1)}}\sum_{j=1}^{2^{d}} [k^{(r)}_{i,j}]^2\right\},
\end{equation}
where $\bg_r=(\bk^{(r)}_1,\bk^{(r)}_2,\ldots, \bk^{(r)}_{2^{d(m_0+r-1)}}).$  
Hence, defining $G^{(\infty)}(\ba)=\bigcup_{r=1}^{\infty}G^{(m_0+r)}(\ba)$ we get
\begin{eqnarray}\label{e:imp2}
\frac{Z(G^{(\infty)}(\ba)\setminus G^{(m_0)}(\ba))}{Z(G^{(m_0)}(\ba))}&\leq& \sum_{\begin{subarray}{c}\mathbf{g}_{r},r=1,2,\ldots,\\
 \text{ {\small at least one} } 
\mathbf{g}_{r}\neq 0 \end{subarray}
}\exp\left\{-c''\beta \sum_r \Big[2^{(d-2)(m_0+r)}\sum_{i,j} [k^{(r)}_{i,j}]^2\Big]\right\},
\nonumber\\
&\leq & \prod_{r=1}^\infty\Bigl(1+\sum_{\mathbf{g}_{r}\neq \mathbf{0}}\exp\left\{-c''\beta  2^{(d-2)(m_0+r)}\sum_{i,j} [k^{(r)}_{i,j}]^2\right\}\Bigr)-1\,.
\end{eqnarray}
Now, if 
\[q(\bg_r)=\sum_{i,j} \ind(k^{(r)}_{i,j}\neq 0)\,,\]
 then for any fixed $q\in \N$, there are at most ${{2^{d(m_0+r)}}\choose{q}}(2n+1)^q\leq 2^{d(m_0+r)q}(2n+1)^q$-many vectors $\bg_r$ such that $q(\bg_r)=q$. To see this, observe that each of the non-zero entries in $\bg_r$ can take values in $\{-n,-n+1,\ldots,n\}$. Thus, for any $r\in \N$, again using 
 \[\frac{c''\beta}{2}2^{(d-2)(m_0+r)}\geq \log (2n+1),
 \text{ and }\sum_{i,j}[ k_{i,j}{(r)}]^2\geq q \text{ for } q=q(\bg_r),\] we get, 

\begin{align*}
\sum_{\bg_r:q(\bg_r)=q\neq 0}\exp\left\{-c''\beta  2^{(d-2)(m_0+r)}\sum_{i,j} [k_{i,j}^{(r)}]^2\right\} &\leq 2^{d(m_0+r)q}(2n+1)^qe^{-c''q\beta 2^{(d-2)(m_0+r)}}\\
&\leq e^{-c_2q\beta 2^{(d-2)(m_0+r)}}\,,
\end{align*}
where $c_2=\frac{c''}{4}.$ Note the choice of $m_0$ is used in the last inequality. 
Thus,
\begin{align*}\sum_{\mathbf{g}_{r}\neq \mathbf{0}}\exp\left\{-c''\beta  2^{(d-2)(m_0+r)}\sum_{i,j} [k^{(r)}_{i,j}]^2\right\}\leq \sum_{q=1}^\infty e^{-c_2q\beta 2^{(d-2)(m_0+r)}}\leq Ce^{-c_2\beta 2^{(d-2)(m_0+r)}}\,,
\end{align*}
for some $C>0$.
Hence, from \eqref{e:imp2}, using the inequality $1+x\leq e^x$ and the fact that 
\[c_2\beta 2^{(d-2)m_0}=\frac{c'}{16}\beta 2^{(d-2)m_0}\geq \frac{c}{4}\log n\,,\]  we have
\begin{align*}
\frac{Z(G^{(\infty)}(\ba)\setminus G^{(m_0)}(\ba))}{Z(G^{(m_0)}(\ba))}&\leq \prod_{r=1}^\infty \Big(1+Ce^{-c_2\beta 2^{(d-2)(m_0+r)}}\Big)-1
\leq  \exp\Big(\sum_{r=m_0}^\infty Ce^{-c_2\beta 2^{(d-2)(r+1)}}\Big)-1\\
&\leq  \exp\Big(C'e^{-c_2\beta 2^{(d-2)(m_0+1)}}\Big)-1
\leq  \exp\Big(C'e^{-\frac{c}{4}\log n}\Big)-1\\
&= \exp\Big(C'n^{-\frac{c}{4}}\Big)-1
\leq  C_1C'n^{-\frac{c}{4}}\,,
\end{align*}
where the last inequality follows since $\frac{e^x-1}{x}\leq C_1$ for all $x\in (0,1]$ and we can choose $n$ large enough so that $C'n^{-\frac{c}{4}}\leq 1$. Now noting that $\displaystyle{\Omega=\bigcup_{\ba}G^{(\infty)}(\ba)}$ and $G^{(m_0)}=\displaystyle{\bigcup_{\ba}G^{(m_0)}(\ba)},$
summing up over all possible partitions $\ba$ at level $m_0$, we have the proposition with $C_0=C_1C'$ and $c_0=\frac{c}{4}$ . 
\end{proof}
 
Using the previous result we finally arrive at the crucial estimate on the partition function hinted at in Section \ref{iop}. Note the improvement over the warm-up result Lemma \ref{linearcorrection}.
\subsection{Sharp estimate of partition function}\label{s3}
\begin{ppn}\label{l:Zn}For $d\geq 3$,
\[e^{-\beta L_n}n!b_n2^{-ndh_n}\leq Z_n\leq e^{-\beta L_n+C_1(\beta)\log^6 n}n!b_n2^{-ndh_n}\,,\]
where $C_1(\beta)>0$ is a decreasing function of $\beta$.
\end{ppn}   
\begin{proof}The lower bounds is obvious by Lemma \ref{groundcontribution} since $Z_n\ge Z(G_n)$.
For the upper bound, we shall use the results from the previous section. By Proposition \ref{l:entropy1}, 
$Z(G^{(m_0)})\geq \frac{Z_n}{2}\,,$
for large $n$. Thus it is enough to bound $Z(G^{(m_0)})$. To this end,
we will compare $Z(G^{(m_0)})$ and $Z(G_n).$
Observe first that by our choice of $m_0$,  we have 
$2^{dm_0}\leq C\left(\frac{\log n}{\beta}\right)^{\frac{d}{d-2}}\,,$ 
for some universal constant $C>0$. 
Now by the same arguments as in \eqref{e:imp2'}, \eqref{ratio98} and \eqref{ratio99}
we get (by taking  $m=1$), 
\begin{align}\label{firstfew}
\frac{Z(G^{(m_0)})}{Z(G_n)}\le \prod_{r=1}^{m_0}\Bigl[\sum_{\mathbf{g}_{r}}\exp\left\{-c'\beta  2^{(d-2)r}\sum_{i,j} [k^{(r)}_{i,j}]^2+\sum_{i,j}c|k^{(r)}_{i,j}|\log n\right\}\Bigr]
\end{align}
Note that since $r\le m_0$ above we do not use the full conclusion of \eqref{e:imp2'} using \eqref{ass1} and keep the term $\sum_{i,j}c|k^{(r)}_{i,j}|\log n.$  
If $|k^{(r)}_{i,j}|\geq \frac{c}{c'\beta}\log n$ for some $i,j$, where $c,c'$ are as in the above display, then 
\begin{equation}\label{e:5.6.1}
\exp\left(-c'\beta2^{(d-2)r}[k^{(r)}_{i,j}]^2+c\log n|k^{(r)}_{i,j}|\right)\leq 1\,.
\end{equation}
And if $|k_{i,j}|\leq \frac{c}{c'\beta}\log n$, then  
\begin{equation}\label{e:5.6.2}
\left(-c'\beta2^{(d-2)r}[k^{(r)}_{i,j}]^2+c\log n|k^{(r)}_{i,j}|\right)\leq \left(\frac{c^2}{c'\beta}\log^2 n\right)\,.
\end{equation}
Since $r\le m_0$ we get $$-c'\beta2^{(d-2)r}\sum_{i,j}[k^{(r)}_{i,j}]^2+c\log n\sum_{i,j}|k^{(r)}_{i,j}|\le 2^{dm_0}\left(\frac{c^2}{c'\beta}\log^2 n\right).$$ 
Putting this together along with our choice of $m_0,$ we get that the exponential term in \eqref{firstfew}, is at most $\exp\left(C(\beta)(\log n)^{\frac{d}{d-2}+2}\right)$ where $C(\beta)=\max\left\{\frac{c^2C}{c'\beta^{\frac{d}{d-2}+1}},\frac{C\log (2^d)}{\beta^{\frac{d}{d-2}}}\right\}$ is a decreasing function of $\beta$.
Since, for a fixed $r\leq m_0$, the number of possible vectors $\bg_r$ of size $2^{dr}$ is at most
 \[(2n+1)^{2^{dr}}\leq (2n+1)^{2^{dm_0}}\leq e^{C'(\beta)(\log n)^{\frac{d}{d-2}+1}}\,,\] 
 where $C'(\beta)=2C\beta^{-\frac{d}{d-2}}$ is a decreasing function of $\beta$, the total number of possible vectors $\bg_r$ for all $r\leq m_0$ is at most $e^{m_0C'(\beta)(\log n)^{\frac{d}{d-2}+1}}$. Thus,
 we have from \eqref{firstfew}, as in \eqref{e:imp2},
 \begin{align*}
 \frac{Z(G^{(m_0)})}{Z(G_n)}&\leq\sum_{\bg_r,\, r=1,2,\ldots,m_0
 }e^{\sum_{r=1}^{m_0}C(\beta)(\log n)^{\frac{d}{d-2}+2}}\\
&\leq e^{m_0C'(\beta)(\log n)^{\frac{d}{d-2}+1}}\times e^{m_0C(\beta)(\log n)^{\frac{d}{d-2}+2}}\\
&\leq e^{C_0(\beta)\log( \log n) C(\beta)(\log n)^{\frac{d}{d-2}+2}}
\leq  e^{C_1(\beta)(\log n)^{\frac{d}{d-2}+3}}\leq e^{C_1(\beta)(\log n)^6}\,,
 \end{align*}
 for $d\geq 3$ and hence we are done. Here we have used that $m_0\leq C_0(\beta)\log \log n$, where $C_0(\beta)$ is a decreasing function of $\beta$. Thus, $C_1(\beta):=C_0(\beta)C(\beta)$ is decreasing in $\beta$.  \end{proof}

The above gives us a sharp estimate on the ratio of partition functions which as indicated in Section \ref{iop}, (see \eqref{cavity1}) will be used to prove Theorem \ref{keyrigid}.
\begin{lem}\label{l:Zratio}For $d\geq 3$ and any $n\geq 2$, there exists $C(\beta)>0$, a decreasing function of $\beta$, such that
\[e^{-\beta D_n-C(\beta)\beta\log^6 n}\leq \frac{Z(n+1,\beta)}{Z(n,\beta)}\leq e^{-\beta D_n+C(\beta)\beta\log^6 n}\,,\]
where we recall $D_n=L_{n+1}-L_n$.
\end{lem}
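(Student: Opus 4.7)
The plan is to deduce the ratio bound directly from the sharp partition-function estimate in Proposition \ref{l:Zn} combined with the ground-state-entropy comparison in Lemma \ref{l:bn}. Proposition \ref{l:Zn} pins $Z(n,\beta)$ between $e^{-\beta L_n} n! b_n 2^{-ndh_n}$ and this same quantity multiplied by $e^{C_1(\beta)\log^6 n}$, so the ratio $Z(n+1,\beta)/Z(n,\beta)$ is automatically controlled once we understand how the explicit factor $e^{-\beta L_n} n! b_n 2^{-ndh_n}$ behaves in $n$.

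For the upper bound, I would apply the upper bound of Proposition \ref{l:Zn} to the numerator $Z(n+1,\beta)$ and its lower bound to the denominator $Z(n,\beta)$. Using $L_{n+1}-L_n = D_n$ and cancelling the factorials (leaving a factor of $n+1$), this yields
\[
\frac{Z(n+1,\beta)}{Z(n,\beta)} \;\leq\; e^{-\beta D_n}\cdot(n+1)\cdot\frac{b_{n+1} 2^{-(n+1)dh_{n+1}}}{b_n 2^{-ndh_n}}\cdot e^{C_1(\beta)\log^6(n+1)}.
\]
By Lemma \ref{l:bn}, the middle ratio is at most $e^{c\log(n+2)}$, so the polynomial-in-$n$ corrections combine to at most $e^{C\log n}$ for an absolute constant $C$. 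For $n\geq 2$ this is dwarfed by $\log^6 n$, so the whole expression can be packaged as $e^{-\beta D_n + C(\beta)\beta\log^6 n}$ by taking $C(\beta)\beta := C_1(\beta) + C/\log^5 2$ (or any convenient absolute slack). The lower bound is entirely symmetric: apply the lower bound of Proposition \ref{l:Zn} to the numerator and its upper bound to the denominator, then use the matching lower estimate $e^{-c\log(n+2)}$ from Lemma \ref{l:bn} for the $b$-and-volume ratio.

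The final bookkeeping step is verifying that the resulting $C(\beta)$ is decreasing in $\beta$. Since Proposition \ref{l:Zn} gives $C_1(\beta)$ decreasing in $\beta$, and the additive constant contributions are $\beta$-independent, the function $C(\beta) = [C_1(\beta) + \text{const}]/\beta$ is decreasing in $\beta$ as required (quotient of a decreasing positive function by the identity remains decreasing). No real obstacle is anticipated; the lemma is essentially a direct corollary of the two preceding results, with the only mild care being the absorption of the $\log n$ and constant terms into the $\log^6 n$ envelope and the monotonicity check on $C(\beta)$.
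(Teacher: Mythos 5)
Your proposal is correct and follows essentially the same route as the paper's own proof: apply the upper bound of Proposition \ref{l:Zn} to $Z(n+1,\beta)$ and the lower bound to $Z(n,\beta)$, control the ratio $b_{n+1}2^{-(n+1)dh_{n+1}}/(b_n2^{-ndh_n})$ via Lemma \ref{l:bn}, absorb the residual $O(\log n)$ factors into the $\log^6 n$ envelope, and set $C(\beta)=C_2(\beta)/\beta$ with the same monotonicity check. No gaps.
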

\begin{proof}Using Proposition \ref{l:Zn} and the upper bound in Lemma \ref{l:bn}, we have
\begin{align*}
\frac{Z(n+1,\beta)}{Z(n,\beta)}&\leq e^{-\beta D_n+C_1(\beta)\log^6(n+1)}(n+1)\frac{b_{n+1}2^{-(n+1)dh_{n+1}}}{b_n2^{-ndh_n}}
\leq e^{-\beta D_n+C_1(\beta)\log^6(n+1)}(n+1)e^{c\log (n+2)}\\
&\leq e^{-\beta D_n+C_1(\beta)\log^6(n+1)+c\log (n+2)+\log(n+1)}
\leq e^{-\beta D_n+C_2(\beta)\log^6n}
=e^{-\beta D_n+C(\beta)\beta\log^6n}\,,
\end{align*}
where $C_2(\beta)=2C_1(\beta)+2c+2$ is a decreasing function of $\beta$ where $C_1(\beta)$ is as defined in Lemma \ref{l:Zn}, and $C(\beta):=\frac{C_2(\beta)}{\beta}$ is also a decreasing function of $\beta$. Similarly, one has the lower bound using Proposition \ref{l:Zn} and the lower bound in Lemma \ref{l:bn}.
\end{proof}
Finally we are ready to prove Theorem  \ref{keyrigid}.
\subsection{Variance bounds: Proof of Theorem \ref{keyrigid}} \label{s4}  
Following the notations in  \cite[Lemma 2.5]{cha}, let
\begin{align*}
f(n_1,\ldots,n_{2^d}):=\frac{n!}{n_1!\ldots n_{2^d}!};\,\,
{g}(n_1,\ldots,n_{2^d}):=e^{-\beta\sum_{i\neq j}n_in_j};\,\,
h(n_1,\ldots,n_{2^d}):=\prod_{i=1}^{2^d}Z(n_i,2^{d-2}\beta)\,.
\end{align*}
Now, choose non negative integers $m_i$ such that $\sum_{i=1}^{2^d} m_i=n$ and $m_i\in \{\lfloor2^{-d}n\rfloor,\lfloor2^{-d}n\rfloor+1\}$. And take $k_i\in \Z$ such that $\sum_{i=1}^{2^d} k_i=0$ and $0\leq m_i+k_i\leq n$ for each $i$. 
Then,
\begin{align}\label{e:hratio}
&\frac{h(m_1+k_1,\ldots,m_{2^d}+k_{2^d})}{h(m_1,\ldots,m_{2^d})}
=\frac{\prod_{i=1}^{2^d}Z(m_i+k_i,2^{d-2}\beta)}{\prod_{i=1}^{2^d}Z(m_i,2^{d-2}\beta)}\\
\nonumber
&\leq \prod_{i=1}^{2^d}\exp\left(-2^{d-2}\beta(L_{m_i+k_i}-L_{m_i})+C(\beta)\beta|k_i|\log^6 n \right)\nonumber\\
\nonumber
&= \exp\left(-2^{d-2}\beta\sum_{i=1}^{2^d}(L_{m_i+k_i}-L_{m_i})+C(\beta)\beta \log^6 n\sum_{i=1}^{2^d}|k_i|\right)\,,
\end{align}
where the inequality follows from a repeated application of Lemma \ref{l:Zratio} and $C(\beta)$ is as defined in that lemma. 
Now from Theorem \ref{gse}, we have 
$$L_n=\frac{(C_d+2)n(n-1)}{2}-C_d\sum_{r=1}^{n-1}\gamma(r),$$
where $\gamma(m)$ was defined in \eqref{e:defg}. Thus, in order to provide an upper bound to \eqref{e:hratio}, we need to bound $\frac{C_d+2}{2}\sum_i\left((m_i+k_i)(m_i+k_i-1)-m_i(m_i-1)\right)$ from below and $\sum_{i=1}^{2^d}\Big(\sum_{r=1}^{m_i+k_i-1}\gamma(r)-\sum_{r=1}^{m_i-1}\gamma(r)\Big)$ from above.

 To this end, let $\varepsilon_i:=m_i-\lfloor 2^{-d}n\rfloor$. Since $\sum_i k_i=0$ and $\varepsilon_i\in \{0, 1\}$ for each $i$, 
\[|\sum_i m_ik_i|= |\sum_i \varepsilon_ik_i|\leq \sum_i|k_i|\,.\]
Thus,
\begin{eqnarray*}
&&\frac{C_d+2}{2}\sum_i\left((m_i+k_i)(m_i+k_i-1)-m_i(m_i-1)\right)\\
&=&\frac{C_d+2}{2}\left(\sum_i k_i^2+2\sum_i m_ik_i\right)=\frac{C_d+2}{2}(\sum_i k_{i}^2+R)\,,
\end{eqnarray*}
where $|R|\leq 2\sum_i |k_i|$.

Next, by Lemma \ref{l:d-2byd}, for all $i\in \{1,2,\ldots,2^d\}$ such that $k_i\geq 0$ and all $z\in \{0,1,\ldots,k_i\}$
\begin{equation}\label{e:thor1}
\gamma(\lfloor 2^{-d}n\rfloor+z)\leq \gamma(\lfloor 2^{-d}n\rfloor)+4k_i^{\frac{d-2}{d}}\,.
\end{equation}
And for all $i$ such that $k_i<0$,  and all $z\in \{0,1,\ldots,|k_i|\}$
\begin{equation}\label{e:thor2}
\gamma(\lfloor 2^{-d}n\rfloor)\leq \gamma(\lfloor 2^{-d}n\rfloor-z)+4|k_i|^{\frac{d-2}{d}}\,.
\end{equation}
Now, without loss of generality, we assume that the first $s$ many of the $k_{i}$'s are non negative, and the rest are negative. Then,
 \begin{align*}
&\sum_{i=1}^{2^d}\Big(\sum_{r=1}^{m_i+k_i-1}\gamma(r)-\sum_{r=1}^{m_i-1}\gamma(r)\Big)
= \sum_{i=1}^{s}\Big(\sum_{r=m_i}^{m_i+k_i-1}\gamma(r)\Big)-\sum_{i=s+1}^{2^d}\Big(\sum_{r=m_i+k_i}^{m_i-1}\gamma(r)\Big)\\
&=\sum_{i=1}^{s}\Big(\sum_{r=\lfloor 2^{-d}n\rfloor+\varepsilon_i}^{\lfloor 2^{-d}n\rfloor+\varepsilon_i+k_i-1}\gamma(r)\Big)-\sum_{i=s+1}^{2^d}\Big(\sum_{r=\lfloor 2^{-d}n\rfloor+\varepsilon_i+k_i}^{\lfloor 2^{-d}n\rfloor+\varepsilon_i-1}\gamma(r)\Big)\\
&\leq \sum_{i=1}^s k_i\Big(\gamma(\lfloor 2^{-d}n\rfloor)+4k_i^{\frac{d-2}{d}}\Big)+\sum_{i=s+1}^{2^d}|k_i|\Big(-\gamma(\lfloor 2^{-d}n\rfloor)+4|k_i|^{\frac{d-2}{d}}\Big)\\
&= \gamma(\lfloor 2^{-d}n\rfloor)\Big(\sum_{i=1}^{2^d}k_i\Big)+ \sum_i4|k_{i}|^{1+\frac{d-2}{d}}=4\sum_i|k_{i}|^{1+\frac{d-2}{d}}\,.
 \end{align*}
 Here, the inequality in the third line follows from \eqref{e:thor1} and \eqref{e:thor2} and the last equality follows since $\sum k_i=0$.
Therefore
\begin{align}\label{e:diffL}
\sum_{i=1}^{2^d}(L_{m_i+k_i}-L_{m_i})&\geq \frac{C_d+2}{2}\sum k_i^2-C_0\sum_i|k_{i}|^{1+\frac{d-2}{d}}-C_0'\sum_i|k_i|\nonumber\\
&\geq \frac{C_d+2}{2}\sum k_i^2-C_1\sum_i|k_{i}|^{1+\frac{d-2}{d}}\,,
\end{align}
where $C_0,C_0',C_1$ are universal constants.
Plugging this in \eqref{e:hratio}, we get
\begin{eqnarray*}
&&\frac{h(m_1+k_1,\ldots,m_{2^d}+k_{2^d})}{h(m_1,\ldots,m_{2^d})}\\
&\leq& \exp\left(-2^{d-3}(C_d+2)\beta\sum_{i=1}^{2^d}k_i^2+ 2^{d-2}C_1\beta\sum_{i=1}^{2^d}|k_i|^{1+\frac{d-2}{d}}+C(\beta)\beta \log^6 n\sum_{i=1}^{2^d}|k_i|\right)\,.
\end{eqnarray*}

The remainder of the proof is quite similar to the proof of \cite[Lemma $2.5$]{cha}. We include the details for completeness.
By definition it follows that 
\[\frac{{g}(m_1+k_1,\ldots,m_{2^d}+k_{2^d})}{{g}(m_1,\ldots,m_{2^d})}\leq \exp\left(\beta\sum_{i=1}^{2^d}(2|k_i|+k_i^2)\right).\]
Therefore,
\begin{align*}
&\frac{\P(N_1=m_1+k_1,\ldots,N_{2^d}=m_{2^d}+k_{2^d})}{\P(N_1=m_1,\ldots,N_{2^d}=m_{2^d})}\\
&= \frac{f(m_1+k_1,\ldots,m_{2^{d}}+k_{2^d})}{f(m_1,\ldots,m_{2^d})}\cdot\frac{g(m_1+k_1,\ldots,m_{2^{d}}+k_{2^d})}{g(m_1,\ldots,m_{2^d})}\cdot\frac{h(m_1+k_1,\ldots,m_{2^{d}}+k_{2^d})}{h(m_1,\ldots,m_{2^d})}\\
&\leq \frac{f(m_1+k_1,\ldots,m_{2^{d}}+k_{2^d})}{f(m_1,\ldots,m_{2^d})}\exp\left(-C_2\beta\sum_{i=1}^{2^d}k_i^2+C_1(\beta)\beta\log^6 n\sum_{i=1}^{2^d}|k_i|+C\beta\sum_{i=1}^{2^d}|k_i|^{1+\frac{d-2}{d}}\right)\,,
\end{align*}
where $C_2=2^{d-3}(C_d+2)-1>1$ for $d\geq 3$, $C_1(\beta)=C(\beta)+2$ is a decreasing function of $\beta$ and $C=2^{d-2}C_1$.

Thus, for $C_2(\beta):=\max\left\{\frac{2^{d+1}C_1(\beta)}{C_2-1},(2^dC)^{\frac{d}{2}},1\right\}$ and $C_2':=\frac{C_2-1}{2}$, if
$\max_{1\leq i\leq 2^d}|k_i|\geq C_2(\beta)(\log^6 n\vee1)$, (where $(a\vee b)$ denotes the maximum of $a$ and $b$),
then
\begin{align*}
&-C_2\beta\sum_{i=1}^{2^d}k_i^2+C_1(\beta)\beta\log^6 n\sum_{i=1}^{2^d}|k_i|+C\beta\sum_{i=1}^{2^d}|k_i|^{1+\frac{d-2}{d}}\\
&\leq -C_2\beta\max|k_i|^2+2^dC_1(\beta)\beta\log^6n\max|k_i|+2^dC\beta\max|k_i|^{1+\frac{d-2}{d}}\\
&\leq -(C_2-1)\beta\max|k_i|^2+2^dC_1(\beta)\beta\log^6n\max|k_i|\\
&\leq -\frac{(C_2-1)\beta}{2}\max|k_i|^2
\leq -C_2'\beta\log^{12}n\,.
\end{align*}
Here, the second inequality follows because $2^dC\leq C_2(\beta)^{\frac{2}{d}}\leq \max|k_i|^{\frac{2}{d}}$, the third inequality follows due to
$$2^dC_1(\beta)\log^6n\leq \frac{C_2-1}{2}C_2\log^6n\leq \frac{C_2-1}{2}\max|k_i|\,,$$
and the last inequality follows because $\max|k_i|^2\geq \log^{12}n$, since $C_2(\beta)\geq 1$. Therefore, in this case,
\[\frac{\P(N_1=m_1+k_1,\ldots,N_{2^d}=m_{2^d}+k_{2^d})}{\P(N_1=m_1,\ldots,N_{2^d}=m_{2^d})}\leq \frac{f(m_1+k_1,\ldots,m_{2^{d}}+k_{2^d})}{f(m_1,\ldots,m_{2^d})}e^{-C_2'\beta\log^{12}n}\,.\]
If $A$ denotes the set of all $(n_1,\ldots,n_{2^d})$ such that each $n_i$ is a non negative integer, $n_1+\ldots+n_{2^d}=n$ and
\[\max_{1\leq i\leq 2^d}|n_i-m_i|\geq C_2(\beta)(\log^6n\vee 1)\,, \]
then, using the multinomial formula and Stirling's approximation, which yields $$\sum_{k_1,\ldots, k_{2^d}} f(m_1+k_1,m_2+k_2,\ldots,m_{2^{d}}+k_{2^{d}})\le C_2''n^{2^{d-1}}f(m_1,m_2,\ldots,m_{2^{d}})$$
for some positive constant $C_2''$, we get,
\[\P((N_1,\ldots,N_{2^d})\in A)\leq \sum_{(n_1,\ldots,n_{2^d})\in A}\frac{\P(N_1=m_1+k_1,\ldots,N_{2^d}=m_{2^d}+k_{2^d})}{\P(N_1=m_1,\ldots,N_{2^d}=m_{2^d})}\leq C_2''n^{2^{d-1}}e^{-C_2'\beta\log^{12}n}\,.\]
Therefore for each $i$,
\begin{align*}
\mbox{Var}(N_i)&\leq \E(N_i-m_i)^2
\leq C_2(\beta)^2(\log^{12}n\vee 1)+n^2\P((N_1,\ldots,N_{2^d})\in A)\\
&\leq C_2(\beta)^2(\log^{12}n\vee 1)+C_2''n^{2^{d-1}+2}e^{-C_2'\beta\log^{12}n}\,.
\end{align*}
Thus, since $C_2(\beta)$ is a decreasing function of $\beta$,
we get $\mbox{Var}(N_i)\leq K(\beta)\log^{12}n\,,$
where $K(\beta)$ is a deceasing function of $\beta$. The proof for any dyadic box is similar with some new ingredients. This is presented in  Lemma \ref{uncondvar} later.
\qed

\vspace{.2in}

We now have all the ingredients to finish the proof of Theorem \ref{marigid}.
\section{Sub-extensive fluctuations for sets with smooth boundaries}\label{sketch}
As indicated before, given the above inputs, the proof of Theorem \ref{marigid} follows using Chatterjee's arguments closely. However for completeness we sketch the main steps referring the interested reader to \cite{cha} for more details.  The key object to analyze is a Doob martingale adapted to the filtration given by the various levels of the $2^{d}-$ary tree.  
Recall from \eqref{tree}
that $\cD_k$ denotes the set of all dyadic sub-cubes at level $k$. For any nonempty open set $U\subseteq [0,1)^d$ with a regular boundary, 
we will now need to define a class of sets:
$$\cU:=\{D\in \cD:\, D\subseteq U,\,\,  D'\not\subset U\}\,\, \text{and }\cU_j:=\cU\cap \cD_j$$ where $D'$ is the parent of $D.$ 
Also $$\cV_j:=\{D\in \cD_j:\, D\not\subset U,\,\,  D\not\subset U^c\},$$
i.e., that intersect both $U$ and $U^c$. For any dyadic cube $D$, let $p(D)=\frac{\mathrm{Leb}(D\cap U)}{\mathrm{Leb}(D)}$. We now explicitly describe the Doob Martingale $M_j=\E(N(U)\mid \cF_j)$ where $\cF_j$ is the $\sigma$-algebra generated by the random variables $\{N(D):D\in \cD_j\}$, by defining  $M_0=\mathrm{Leb}(U)n$ and for every $j\geq 1$, setting
\begin{equation}\label{doob1}
M_j:=\sum_{i=0}^j\sum_{D\in \cU_i}N(D)+\sum_{D\in \sV_j}p(D)N(D)\,.
\end{equation} (see \cite[Lemma 2.11 ]{cha} for a formal proof that this is indeed a martingale).
We now proceed to proving the upper bound in Theorem \ref{marigid}. The proof is slightly simpler than the one appearing in \cite{cha} since the strong hyperuniformity bound established in Theorem \ref{keyrigid} allows us to be more crude in our arguments at the cost of paying some logarithmic factors in the statement of the upper bound in Theorem \ref{marigid}.
We start with a series of simple but useful lemmas. 
\begin{lem}\cite[Lemma 2.7]{cha}\label{condvar} For any $D\in \cD_j,$ and $D'$ a child of $D$ we have 
\begin{align*}
\E(D'\mid \cF_{j})&=\frac{N(D)}{2^{d}},\\
\Var(N(D')\mid \cF_{j})&\le K(\beta) \max(\log^{12}(N(D)),1),
\end{align*}
where $K(\beta)$ is a decreasing function of $\beta.$
\end{lem}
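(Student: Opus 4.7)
The plan is to exploit the hierarchical structure of the potential, which lets both statements follow from symmetry combined with a rescaling argument and the strong rigidity estimate already established in Theorem~\ref{keyrigid}. The key preliminary observation is that conditionally on $\cF_j$, the restrictions of the process to the different cubes of $\cD_j$ become independent rescaled hierarchical Coulomb gases. Indeed, from the definition \eqref{hierar1}, for two points $x,y$ lying in distinct cubes of $\cD_j$ the value $w(x,y)$ depends only on which cubes of $\cD_j$ contain them; hence, once the counts $\{N(D):D\in\cD_j\}$ are fixed, the cross-cube contribution to the Hamiltonian is a deterministic constant, and the joint conditional density factorizes into a product over $D\in\cD_j$. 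Moreover, inside a fixed cube $D\in\cD_j$ of side length $2^{-j}$ containing $N:=N(D)$ points, the affine rescaling that sends $D$ onto the unit cube maps an interaction $w(x,y)=2^{(d-2)(k-1)}$ (where $k>j$ is the first level at which $x,y$ separate) to $2^{(d-2)(k-j-1)}$. Consequently, the conditional law of the configuration inside $D$ is that of a standard hierarchical Coulomb gas on $[0,1)^d$ with $N$ points and effective inverse temperature $\beta_j:=2^{(d-2)j}\beta$.

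The first identity (read as $\E(N(D')\mid\cF_j)=N(D)/2^d$) is now immediate by permutation symmetry: the rescaled gas inside $D$ is invariant under permutations of its $2^d$ top-level sub-cubes, which under the rescaling correspond bijectively to the $2^d$ children of $D$; therefore all of them share the same conditional expected count, which must equal $N(D)/2^d$ by the constraint that they add to $N(D)$.

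For the variance bound, since each child $D'$ of $D$ corresponds under the rescaling to a top-level dyadic cube (an element of $\cD_1$) of the rescaled gas with $N$ points at inverse temperature $\beta_j$, Theorem~\ref{keyrigid} applied to this gas yields
\[
\Var(N(D')\mid\cF_j)\;\le\;K(\beta_j)\,\max\!\bigl(\log^{12}(N(D)),1\bigr).
\]
Because $\beta_j=2^{(d-2)j}\beta\ge\beta$ for every $j\ge0$ and $d\ge3$, and because $K(\cdot)$ is a non-increasing function of its argument by Theorem~\ref{keyrigid}, the right-hand side is bounded by $K(\beta)\max(\log^{12}(N(D)),1)$, completing the proof. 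The only point to verify carefully is that Theorem~\ref{keyrigid} can indeed be applied with the rescaled system size $N\le n$ rather than only for $n$ itself; this is clear from the statement and from the remark in Theorem~\ref{keyrigid} that the bound holds for every dyadic cube, uniformly in the number of particles. No step here is truly delicate once the factorization/rescaling is in hand; the main work has been done in establishing Theorem~\ref{keyrigid}.
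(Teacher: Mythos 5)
Your proposal is correct and follows exactly the route the paper takes: exchangeability for the conditional expectation, and the factorization/rescaling of the conditional law inside a cube of $\cD_j$ into an independent hierarchical gas at inverse temperature $2^{(d-2)j}\beta\ge\beta$, to which the already-proven $\cD_1$ case of Theorem~\ref{keyrigid} is applied together with the monotonicity of $K(\cdot)$. The paper merely sketches this in two sentences; your write-up supplies the details faithfully and avoids the potential circularity by invoking only the $\cD_1$ statement for the rescaled system.
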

\begin{proof} The proof of expectation is immediate by exchangeability of particles. The variance bound follows by using the hierarchical structure of the model and the already proven part of Theorem \ref{keyrigid}.
\end{proof}

\begin{lem}\cite[Lemma 2.8]{cha}(Unconditional variance)\label{uncondvar} For any $D\in \cD,$ 
$\E(N(D))={\rm Leb}(D)n$ and 
$$\Var(N(D))\le K(\beta)[\log^{12}(\E(N(D))+1],$$
where $K(\beta)$ is a decreasing function of $\beta.$
In particular for any $D$ using $\E(N(D))\le n$ we get $$\Var(N(D))\le O(K(\beta)\log^{12}(n)),$$ and for any $D$ such that $\E(N(D))=O(1)$ we get $\Var(N(D))=O(1).$ 
\end{lem}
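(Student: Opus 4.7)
The expectation claim $\E[N(D)]=\mathrm{Leb}(D)n$ is immediate from the symmetries of the Gibbs density \eqref{hierar1}: since the hierarchical potential $w$ depends only on tree-distance, the measure is invariant under permutations of the $2^d$ children of any dyadic cube as well as under particle exchanges. Consequently all $2^{dj}$ dyadic cubes at a common level $j$ share the same expected count, which must equal $n\cdot 2^{-dj}=\mathrm{Leb}(D)\,n$.

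For the variance, fix $D\in\cD_j$, let $D=A_0\subset A_1\subset\cdots\subset A_j=[0,1)^d$ be its ancestor chain, and set $Y_i:=N(A_i)$ and $\mu_i:=\E[Y_i]=n\cdot 2^{-d(j-i)}$. The first ingredient is that conditioning on the configuration outside $A_{i+1}$ exhibits the points inside $A_{i+1}$, after rescaling $A_{i+1}$ to $[0,1)^d$, as a hierarchical Coulomb gas with $Y_{i+1}$ particles at inverse temperature $\beta':=2^{(d-2)(j-i-1)}\beta\geq\beta$ (the rescaling is as in the discussion around \eqref{probest1}). In this rescaled picture $A_i$ is a level-$1$ child, so the already proven level-$1$ statement of Theorem \ref{keyrigid}, together with the monotonicity of $K$, yields
\[\Var(Y_i\mid Y_{i+1}=m)\leq K(\beta)\bigl[\log^{12}(m+1)+1\bigr].\]
Next, by orthogonality of the backward-martingale increments of $M_i:=Y_i/2^{di}$ (which satisfies $M_0=N(D)$, $M_j=\E[N(D)]$),
\[\Var(N(D))=\sum_{i=0}^{j-1}2^{-2di}\,\E[\Var(Y_i\mid Y_{i+1})]\leq K(\beta)\sum_{i=0}^{j-1}2^{-2di}\,\E\bigl[\log^{12}(Y_{i+1}+1)+1\bigr].\]

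The crux is then to show that $\E[\log^{12}(Y_{i+1}+1)]\leq C\bigl[\log^{12}(\mu_{i+1}+1)+1\bigr]$, because expanding $\log^{12}(\mu_{i+1}+1)=\log^{12}(2^{d(i+1)}\mu+1)\leq C_d[\log^{12}(\mu+1)+i^{12}]$ (with $\mu=\E(N(D))$) and summing the geometric-polynomial series $\sum_i 2^{-2di}[\log^{12}(\mu+1)+i^{12}+1]$ then recovers the claimed bound $\Var(N(D))\leq K'(\beta)[\log^{12}(\E(N(D))+1)+1]$ with $K'(\beta)$ still non-increasing. To establish this concentration bound I would not use Chebyshev's inequality with the weak $O(\log^{12}n)$ variance, but rather apply iteratively, at each level descending from $A_j$ down to $A_{i+1}$ and with a union bound, the sharp sub-Gaussian tail produced inside the proof of Theorem \ref{keyrigid}, giving an estimate of the form $\P(|Y_{i+1}-\mu_{i+1}|\geq C(\beta)\log^{6}n)\leq n^{O(1)}e^{-c\beta\log^{12}n}$. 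Splitting the expectation at this concentration scale and using the trivial bound $Y_{i+1}\leq n$ on the rare event, the tail contribution is negligible for large $n$, leaving the bulk contribution $\log^{12}(\mu_{i+1}+C\log^6 n+1)$.

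The \emph{main obstacle} is precisely this concentration step in the regime $\mu_{i+1}\ll\log^{6}n$: the sub-Gaussian tail as stated gives a $\log^{6}n$ deviation scale, independent of $\mu_{i+1}$, which is too crude to recover the claim $\Var(N(D))=O(1)$ when $\E(N(D))=O(1)$. To close this gap one has to rerun the energy-entropy comparison of Section \ref{s2} \emph{inside} each $A_{i+1}$ with its own size $Y_{i+1}$ replacing $n$, yielding a scale-adaptive $\log^{6}\mu_{i+1}$ deviation. Once this scale-adaptive tail is established, the two stated corollaries are immediate: $\log^{12}(\E(N(D))+1)\leq\log^{12}(n+1)\leq C\log^{12}n$ gives the first, while $\log^{12}(\E(N(D))+1)=O(1)$ when $\E(N(D))=O(1)$ gives the second.
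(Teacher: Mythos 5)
Your skeleton is the same as the paper's: both proofs get the expectation from exchangeability, and both iterate the law of total variance along the ancestor chain $D=A_0\subset A_1\subset\cdots\subset A_j=[0,1)^d$, using the hierarchical rescaling plus the already-proved level-one case of Theorem \ref{keyrigid} (i.e.\ Lemma \ref{condvar}) to bound each conditional variance by $K(\beta)\max\{\log^{12}(Y_{i+1}),1\}$, and then summing the geometric series in $2^{-2di}$. Up to that point you are fine.

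The genuine gap is in the step you yourself flag as the crux: controlling $\E\bigl[\log^{12}(Y_{i+1}+1)\bigr]$ by $\log^{12}(\mu_{i+1}+1)$ plus a constant. You propose to do this with the sub-Gaussian tail from the proof of Theorem \ref{keyrigid}, iterated down the levels, and you correctly observe that this fails when $\mu_{i+1}\ll\log^6 n$ (the deviation scale $\log^6 n$ is not adaptive to the cube's own occupation number, so you would pick up spurious $(\log\log n)^{12}$-type factors and lose the $\Var(N(D))=O(1)$ conclusion for cubes with $\E(N(D))=O(1)$). Your proposed repair --- rerunning the entire energy--entropy comparison of Section \ref{s2} inside each $A_{i+1}$ with $Y_{i+1}$ in place of $n$ --- is not carried out, and it is a substantial piece of work (a scale-adaptive version of Proposition \ref{l:entropy1} and the tail bound), far heavier than the lemma it is meant to prove. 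The paper closes this step in one line with no concentration at all: choose $c>0$ large enough (e.g.\ $c\ge e^{11}$) that $x\mapsto\log^{12}(x+c)$ is concave on $x>0$, and apply Jensen's inequality to get
\begin{equation*}
\E\bigl[\log^{12}(N(A_{i+1})+c)\bigr]\;\le\;\log^{12}\bigl(\E[N(A_{i+1})]+c\bigr)=\log^{12}(\mu_{i+1}+c)\,,
\end{equation*}
after which the weighted sum $\sum_i 2^{-2di}\log^{12}(2^{d(i+1)}\mu+c)$ converges exactly as in your sketch. So the route you chose at the decisive step does not close as written; replacing the concentration argument by this concavity/Jensen observation repairs the proof and recovers both corollaries, including the $O(1)$ bound for cubes with $\E(N(D))=O(1)$.
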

The first half of the result completes the proof of the final statement  of Theorem \ref{keyrigid}.

\begin{proof} The statement about expectation is just a consequence of the fact that each point is marginally uniformly distributed on $[0,1)^d$, so we focus on the variance. Let $D\in \cD_j$ and $D' \in \cD_{j-1}$ be its parent. Then using the form of $\E(N(D)\mid \cF_{j-1})$ and the previous lemma we get,
\begin{align*}
\E(N^2(D))&=\E(\Var(N(D)\mid \cF_{j-1}))+2^{-2d}\E(N^2(D'))\\
&\le K(\beta)\E\log^{12}(N(D')+c)+2^{-2d}\E(N^2(D'))\\
&\le K(\beta)\log^{12}(\E(N(D'))+c)+2^{-2d}\E(N^2(D'))
\end{align*}
where $c>0$ is chosen such that $\log^{12}(x+c)$ is concave for $x>0.$  
Iterating this we obtain
\begin{align*}
\E(N^2(D))\le K(\beta)\log^{12}(\E(N(D)+c))(1+\frac{\log^{12}(2^{d})}{2^{2d}}+\frac{\log^{12}(2^{2d})}{2^{4d}}+\ldots)+2^{-2dj}n^2
\end{align*}
Since $\E(N(D))=2^{-dj}n$ we obtain $\Var(N(D))=K(\beta)\log^{12}(\E(N(D))+c).$
\end{proof}
The next lemma bounds the variance of the martingale $M_j$ defined in \eqref{doob1}.
\begin{lem}\cite[Lemma 2.12]{cha}\label{martvar} Let $k$ be the smallest integer such that $2^{dk}\ge n.$ For all $j\le k$
\begin{align*}
\Var(M_j)\le O(\log^{12}(n)n^{\frac{d-1}{d}})+\Var (M_{j-1}).
\end{align*}
In particular this implies $\Var(M_k)=O(\log^{13}(n) n^{\frac{d-1}{d}}).$
\end{lem}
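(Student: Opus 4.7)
The plan is to use the martingale property, $\Var(M_j)=\Var(M_{j-1})+\E[\Var(M_j-M_{j-1}\mid \cF_{j-1})]$, and to bound the increment via the hierarchical structure plus the regularity of $\partial U$. The first step is to identify exactly which dyadic cubes the difference $M_j-M_{j-1}$ depends on. From the definition in \eqref{doob1}, only cubes $D\in\cV_{j-1}$ contribute to the difference, since the $2^d$ children of any $D\in\cV_{j-1}\cup\cU_{\le j-1}$ give a telescoping identity otherwise. Concretely, if $D\in\cV_{j-1}$ has children $D_1,\dots,D_{2^d}$, then using $N(D)=\sum_i N(D_i)$ and grouping the children into those that land in $\cU_j$ (with coefficient $1$), in $\cV_j$ (with coefficient $p(D_i)\in[0,1]$), or outside $U$ (with coefficient $0$), the contribution to $M_j-M_{j-1}$ from $D$ can be written as $\Delta_D:=\sum_{i=1}^{2^d}(c_i-p(D))N(D_i)$ for coefficients $c_i\in[0,1]$, so that each $|\Delta_D|$-summand is a bounded linear combination of the children counts.

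Next I would exploit the key feature of the hierarchical model: conditionally on $\cF_{j-1}$, the joint density of the particle positions factorizes across the cubes in $\cD_{j-1}$. This is because the cross-cube terms in $H_n$ between two points in distinct cubes $D,D'\in\cD_{j-1}$ take a value $2^{(d-2)(k-1)}$ that depends only on the level $k\le j-1$ at which the ancestors of $D$ and $D'$ first split, i.e., only on the identities of $D,D'$ and not on the positions of the points within them. Consequently the families $\{\Delta_D\}_{D\in\cV_{j-1}}$ are \emph{conditionally independent} given $\cF_{j-1}$, and
\[
\Var(M_j-M_{j-1}\mid \cF_{j-1})=\sum_{D\in\cV_{j-1}}\Var(\Delta_D\mid \cF_{j-1}).
\]
By Cauchy--Schwarz and Lemma \ref{condvar}, $\Var(\Delta_D\mid\cF_{j-1})\le 2^{2d}\max_i\Var(N(D_i)\mid\cF_{j-1})\le C\, K(\beta)\max\{\log^{12}(N(D)),1\}$, which after taking expectations and using Lemma \ref{uncondvar} (concavity of $x\mapsto \log^{12}(x+c)$) is at most $C'K(\beta)\log^{12}n$ per parent cube.

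The third step is the counting of $|\cV_{j-1}|$: a cube $D\in\cV_{j-1}$ has side length $2^{-(j-1)}$ and intersects both $U$ and $U^c$, hence lies in $\partial U_{\sqrt{d}\,2^{-(j-1)}}$. By the regularity assumption, this $\varepsilon$-neighborhood has Lebesgue measure $O(2^{-j})$, so $|\cV_{j-1}|\le C 2^{-j}/2^{-d(j-1)}=O(2^{(d-1)j})$. For $j\le k$ with $2^{d(k-1)}<n\le 2^{dk}$, this gives $|\cV_{j-1}|\le O(2^{(d-1)k})=O(n^{(d-1)/d})$. Putting the pieces together,
\[
\Var(M_j)-\Var(M_{j-1})=\E[\Var(M_j-M_{j-1}\mid\cF_{j-1})]=O\!\left(K(\beta)\log^{12}(n)\,n^{(d-1)/d}\right),
\]
which is the first claim. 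Iterating this from $j=0$ (where $M_0$ is deterministic) up to $j=k=O(\log n)$ picks up one extra logarithmic factor and yields $\Var(M_k)=O(\log^{13}(n)\,n^{(d-1)/d})$.

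The step I expect to be the most delicate is the conditional independence claim across parents in $\cV_{j-1}$: one has to make sure the factorization of the conditional density really does occur at level $j-1$ and that no hidden coupling arises from the normalization. Once this factorization is established by writing $e^{-\beta H(\mathbf x)}$ as (cross-cube factor depending only on the $\{N(D)\}_{D\in\cD_{j-1}}$) times a product over $D\in\cD_{j-1}$ of within-cube Gibbs densities, everything reduces to bounding each within-cube contribution by Lemma \ref{condvar} and counting boundary cubes, both of which are routine.
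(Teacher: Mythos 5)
Your proposal is correct and follows essentially the same route as the paper: the martingale variance decomposition, conditional independence across distinct level-$(j-1)$ cubes from the hierarchical structure, Cauchy--Schwarz plus Lemma \ref{condvar} for the within-parent terms, and the $O(n^{(d-1)/d})$ count of boundary cubes. The only difference is organizational -- you group the increment by parent cubes in $\cV_{j-1}$ while the paper sums over sibling pairs in $\cU_j\cup\cV_j$ -- and the conditional factorization you flag as delicate is exactly the standard hierarchical independence the paper also invokes.
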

\begin{proof}It is easy to check (decomposition of variance property of martingales) that 
$$\Var(M_j)=\E(\Var(M_j\mid \cF_{j-1}))+\Var(M_{j-1}).$$
Now by \eqref{doob1},
\begin{align*}
\Var(M_j\mid \cF_{j-1})&=\Var\left(\sum_{D\in \cU_j\cup \cV_j}p(D)N(D)\right)
=\sum_{D,D' \in \cU_j\cup \cV_j}p(D)p(D') \Cov(N(D),N(D')\mid \cF_{j-1}).
\end{align*}
Now conditional on $\cF_{j-1},$ $N(D)$ and $N(D')$ are independent, if $D$ and $D'$ are not siblings.  Otherwise Cauchy-Schwarz inequality and Lemma \ref{condvar} yields that 
$\Cov(N(D),N(D')\mid \cF_{j-1})\le K(\beta)\log^{12}(n).$
Thus 
\begin{align*}
\Var(M_j\mid \cF_{j-1})&\le \sum_{\begin{subarray}{c}D,D' \in \cU_j\cup \cV_j\\
D,D' \text{ are siblings}
\end{subarray}}p(D)p(D') \Cov(N(D),N(D')\mid \cF_{j-1})\\
& \le O(\log^{12}(n))\Big|\cU_j\cup \cV_j\Big|=O(\log^{12}(n))n^{\frac{d-1}{d}}).
\end{align*}
where the last inequality uses that the number of siblings of any $D\in \cD$ is bounded by $2^d$ and the observation that $|\cU_j\cup \cV_j|\le |\cU_k\cup \cV_k|=O(n^{\frac{d-1}{d}}),$ for all $j\le k,$ by our choice of $k,$ and the hypothesis of regularity on $U,$ (for the precise geometric details see \cite[Pg 21]{cha}).
\end{proof}

Using the above preparation we can now finish the proof of the upper bound in Theorem \ref{marigid}.
\begin{proof}[Proof of Upper bound] Let $k$ be as in the previous lemma. 
Using the decomposition $$U=\left(\bigcup_{j=0}^k\cU_j\right)\cup \left(\bigcup_{D\in \cV_k}(D\cap U)\right),$$ we get $$N(U)=\sum_{j=0}^k\sum_{D\in \cU_j}N(D)+\sum_{D\in \cV_k}N(D\cap U).$$
Since $\E(N(U)\mid \cF_k)=M_k$ we get 
\begin{align}\label{decom98}
\Var(N(U))=\E(\Var(N(U)\mid \cF_k))+\Var(M_k).
\end{align}
Since $\sum_{j=0}^k\sum_{D\in \cU_j}N(D)$ is measurable with respect to $\cF_k$ we have,
\begin{align}\label{decom12}
\Var(N(U)\mid \cF_k)&=\Var\left(\sum_{D\in \cV_{k}}N(D\cap U)\Bigg\vert   \cF_k\right)
=\sum_{D\in \cV_k}\Var(N(D\cap U)\mid \cF_k)\\
\nonumber
&\le \sum_{D\in \cV_k}\E(N(D\cap U)^2\mid \cF_k)\le \sum_{D\in \cV_k}N(D)\E(N(D\cap U)\mid \cF_k)
\le \sum_{D\in \cV_k}p(D)N(D)^2.
\end{align}
Now for any $D$ in the sum above, since $\E(N(D))=O(1)$ by our choice of $k,$ using $\E(N(D)^2)=O(1)$ (Lemma \ref{uncondvar}) and $p(D)\le 1$ for all $D\in \cD,$ we get $$\E(\Var(N(U)\mid \cF_k))\le O(|\cV_k|)=O(n^{\frac{d-1}{d}}).$$
Thus by \eqref{decom98} and Lemma \ref{martvar} it follows that $\Var(N(U))=O(\log^{13}(n)n^{\frac{d-1}{d}}).$

\end{proof}

We now provide the key steps of the proof of the lower bound in Theorem \ref{marigid}. 

\begin{proof}[Proof of lower bound] Recall the sketch from Figure \ref{fig5}. First, one can observe that the geometric lemmas \cite[Lemmas 2.14, 2.15, 2.16]{cha}  with trivial modifications work in dimension $d$ to imply the following statement analogous to \cite[Lemmas 2.17, 2.18]{cha}: For any set $U$ satisfying the conditions in Theorem \ref{marigid}, there exists $c,K_1>0$ and some $j_1\geq 1$ depending only on $U$, such that for any $j\geq j_1$, there exists a set of at least $K_12^{(d-1)j}$ cubes $D\in \cD_j$ satisfying
\begin{equation}\label{e:intersect}
c\leq \frac{\mathrm{Leb}(D\cap U)}{\mathrm{Leb}(D)}\leq 1-c\,,
\end{equation}
and the diameter of the union of all these cubes is at most $\diam(U)/3$.
Also, the bound in \cite[Lemma 2.19]{cha} on the probability of a dyadic cube having multiple points,  in dimension $d$ becomes the following. For any $n\geq 1,\beta>0,j\geq 0$ and $D\in \cD_j$,
\begin{equation}\label{e:nd2}
\P(N(D)\geq 2)\leq \exp\left(-2^{(d-2)(j+1)}\beta+(C_d+2){n\choose 2}\right)\,.
\end{equation}
Now, we choose $k$ such that $2^{dk}=O(n)$. Then, using Lemma \ref{uncondvar}, one has $\E(N(D)^2)=O(1)$ for any $D\in \cD_k$. 
Again, for any $D\in\cD_k$, and $j>k,$ let $\cD_j(D)$ be the set of elements of $\cD_j$ which are descendants of $D$. Now, for  $m$ a large enough fixed constant,  using \eqref{e:nd2}, we get a $j>k$ with $j-k=O(1)$ such that the event $E$ defined as
\[E=\{N(D)\leq m, N(D')\geq 2, \mbox{ for some } D\in \cD_k, D'\in \cD_j(D)\}\,,\]
has a probability that decays with $n$. 
Given such a $j$, we get a set $\sD'\subseteq \cD_j$ such that $|\sD'|=\Theta(n^{\frac{d-1}{d}})$ and the elements satisfy \eqref{e:intersect}. Let $\sD$ denote the set of ancestors of the elements of $\sD'$in $\cD_k$. Now define
\[q:=\frac{|\{D\in\sD:0<N(D)\leq m\}|}{|\sD|}\,.\]
To lower bound $q$,  one observes that a lower bound on 
$\frac{|\{D\in\sD:0<N(D)\}|}{|\sD|}$ and an upper bound on $\frac{|\{D\in\sD:N(D)\leq m\}|}{|\sD|}$ follows by the 
second moment method or Paley-Zygmund inequality (to make the second moment method go through, one uses $\E(N(D)^2)=O(1)$ for $D\in \cD_k$ which was proved in Lemma \ref{uncondvar}) and Markov's inequality respectively. Combining we obtain, 
\[\P\left(q\geq \frac{C}{m}\right)\geq \frac{1}{C}\,,\]
for some $C>1$. 
Now, if $\sD_0$ is the set of all elements of $\sD'$ that are contained in some $D\in \sD$ such that $0<N(D)\leq m$, then  the fact that $q\geq \Theta(1)$ with a probability bounded away from zero, implies that 
\[\P(|\sD_0|\geq Kn^{\frac{d-1}{d}})\geq c>0\,,\]
for some constant $K>0$.
Next define 
\[\sC:=\{D\in \sD_0: N(D)=1\}\,.\]
Because of the bound on $|\sD_0|$ and the fact that $E^c$ is a high probability event, we get
\begin{equation}\label{e:lwb}
\P(|\sC|\geq Kn^{\frac{d-1}{d}})\geq c>0\,.
\end{equation}
Finally, define
$$M:=\sum_{D\in \sC} N(D\cap U)\,.$$
Since the random variables $\{N(D\cap U):D\in \cD_j\}$ are independent given $\cF_j$, the conditional distributions of $\{N(D\cap U):D\in \sC\}$ given $\cF_j$ are independent Bernoulli random variables with probabilities $p(D)$ satisfying $p(D)\in [c,1-c]$. Thus, for any interval $I$,
\begin{equation}\label{anticonc}
\P(M\in I|\cF_j)\leq O\left(\frac{|I|}{\sqrt{|\cC^*|}}\right)\,.
\end{equation}
Since
\[N(U)=\sum_{D\in \cD_j}N(D\cap U)=\sum_{D\in \cD_j\setminus \sC}N(D\cap U)+M\,,\]
and conditional on $\cF_j$, the two terms on the RHS are independent, the
anti-concentration statement in Theorem \ref{marigid} now follows easily from here using \eqref{e:lwb} and \eqref{anticonc}.
\end{proof}

\section{Fluctuations of smooth linear statistics}\label{posmooth} In this section we prove Theorem \ref{smoothsharp} by a similar argument as in the proof of Theorem \ref{marigid} with the only difference being, we consider a slightly different  martingale sequence $\{W_{j}\}_{j\ge 0}$ given by $W_j=\E(X(f)\mid \cF_{j}).$ Note the following alternate description: For any $D\in \cD,$ let $f(D)$ be the average of $f$ over $D$ and let $f_j$ be the function that is equal to $f(D)$ for all $D\in \cD_j.$ Then $W_{j}=X(f_j).$ Then as in the proof of Theorem \ref{marigid}, it follows that, for any $k,$
\begin{equation}\label{decomsmooth}
\Var(X(f))=\E(\Var(X(f))\mid \cF_k)+\sum_{j=1}^{k}\E(\Var(X(f_j))\mid \cF_{j-1}).
\end{equation}
We will take $k$ to be the minimum value such that $2^{dk}\ge n.$ Also for any $D\in \cD$, let $c(D)$ denote the set of children of $D.$ Now for any $j$ observe that 
\begin{align*}
&\Var(X(f_j)\mid \cF_{j-1})=\Var\left(\sum_{D\in\cD_j}f(D)N(D)\bigg|\cF_{j-1}\right)
=\sum_{D\in \cD_{j-1}}\Var\left(\sum_{D'\in c(D)}f(D')N(D')\bigg| \cF_{j-1}\right)\\
&=\sum_{D\in \cD_{j-1}}\E\left(\left(\sum_{D'\in c(D)}f(D')N(D')-f(D)N(D)\right)^2\bigg|\cF_{j-1}\right), \text{ since $\sum_{D'\in c(D)}f(D')=2^{d}f(D).$}
\end{align*}
Now we use the observation that for any $D\in\cD_{j-1},$
$$\sum_{D'\in c(D)}f(D')N(D')-f(D)N(D)=\sum_{D'\in c(D)}(f(D')-f(D))\left(N(D')-\frac{N(D)}{2^{d}}\right)$$ and that $f$ has Lipschitz constant $L$ and ${\rm{diam}}(D)=O(\frac{1}{2^{j}}),$ to conclude that $|f(D)-f(D')|\le \frac{O(L)}{2^{j}}$ and hence
\begin{align*}
&\sum_{D\in \cD_{j-1}}\E\left(\left(\sum_{D'\in c(D)}(f(D')-f(D))\left(N(D')-\frac{N(D)}{2^{d}}\right)\right)^2\bigg|\cF_{j-1}\right)\\
&\le \frac{O(L^{2})}{4^{j}} \sum_{D\in \cD_{j-1}}\E\left(\left(\sum_{D'\in c(D)}\left|N(D')-\frac{N(D)}{2^{d}}\right|\right)^2\bigg|\cF_{j-1}\right)\\
&\le\frac{O(L^{2})}{4^{j}} \sum_{D\in \cD_{j-1}}\sum_{D'\in c(D)}\Var(N(D')\mid \cF_{j-1})\\
&\le\frac{O(L^{2})}{4^{j}}\sum_{D\in \cD_{j-1}}K(\beta)\max(\log^{12}(N(D)),1),\,\text{ by Lemma \ref{condvar}.}
\end{align*}
Above the constants in the $O(\cdot)$ notation are universal but change from line to line. 
We now prove a similar bound on $\Var(X(f)\mid \cF_k).$ Let for any $D\in \cD_{k},$ $s(D)=\sum_{X_j\in D}f(X_j).$ Thus $X(f)=\sum_{D\in \cD_k}s(D).$
Using similar arguments as above we obtain, \begin{align*}
\Var(X(f)\mid \cF_{k})&=\sum_{D\in \cD_k} \Var(s(D)\mid \cF_{k})= \sum_{D\in \cD_k}\E\left((s(D)-f(D)N(D))^2\bigg| \cF_k\right)\\
& \le \sum_{D\in \cD_k}\frac{O(L^2)}{4^k}\E(N(D)^2 | \cF_k),\end{align*}
where the last inequality uses the Lipschitz nature of $f.$ 
Thus putting everything together using \eqref{decomsmooth} and Lemma \ref{uncondvar}, our choice of $k,$ and that $|\cD_j|=2^{dj},$ we get that 
\begin{align*}
\Var(X(f))&=\E(\Var(X(f)\mid \cF_k))+\sum_{j=1}^{k}\E(\Var(X(f_j)\mid \cF_{j-1}))\\
&= O(K(\beta)L^2)\sum_{j=1}^k{\log^{12}\left(\frac{n}{2^{dj}}\right)}2^{(d-2)j}
= O(K(\beta)L^2)\sum_{j=1}^k{(k-j)^{12}}2^{(d-2)j}\\
&=O(K(\beta)L^2 2^{(d-2)k})=O(K(\beta)L^2n^{\frac{d-2}{d}}).
\end{align*}
Hence the proof is complete. \qed

 \bibliography{Hierarchical-coulomb}
\bibliographystyle{plain}

\end{document}